\newcommand\redcircle[1]{\filldraw[fill=white, draw=red] #1 circle (2pt)}
\newcommand\bluedot[1]{\filldraw[blue] #1 circle (2pt)}
\newcommand\greensquare[1]{\filldraw[green,xshift=-2pt,yshift=-2pt] #1 rectangle ++(4pt,4pt)}
\newcommand\squarelabel[1]{$\scriptstyle{#1}$}
\newcommand\dotlabel[1]{$\scriptstyle{#1}$}
\newcommand\cbubble[3]{
	\begin{tikzpicture}[anchorbase]
	\draw[->] (0,0.3) arc (90:-270:0.3);
	\bluedot{(0.3,0)} node[anchor=west,color=black] {#1};
	\draw{(0,-0.3)} node[anchor=north] {#2};
	\redcircle{(-0.3,0)} node[anchor=east,color=black] {#3};
	\end{tikzpicture}
}
\newcommand\ccbubble[3]{
	\begin{tikzpicture}[anchorbase]
	\draw[->] (0,0.3) arc (90:450:0.3);
	\bluedot{(-0.3,0)} node[anchor=east,color=black] {#1};
	\draw{(0,-0.3)} node[anchor=north] {#2};
	\redcircle{(0.3,0)} node[anchor=west,color=black] {#3};
	\end{tikzpicture}
}
\tikzset{anchorbase/.style={>=To,baseline={([yshift=-0.5ex]current bounding box.center)}}}
\tikzset{->-/.style={decoration={
			markings,
			mark=at position #1 with {\arrow{>}}},postaction={decorate}}}
\tikzset{-<-/.style={decoration={
			markings,
			mark=at position #1 with {\arrow{<}}},postaction={decorate}}}
\crefname{eg}{Example}{Examples}
\crefname{lem}{Lemma}{Lemmas}
\crefname{theo}{Theorem}{Theorems}
\crefname{equation}{}{}
\crefname{enumi}{}{}
\newcommand\Z{\mathbb{Z}}
\newcommand\kk{\Bbbk}
\newcommand\Heis{\mathcal{H}\mathit{eis}}
\newcommand\one{\mathbb{1}}
\newcommand\tr{\mathrm{tr}}
\newcommand\sQ{\mathsf{Q}}
\def\chk#1{#1^{\smash{\scalebox{.8}[1.4]{\rotatebox{90}{\textnormal{\guilsinglleft}}}}}} 
\DeclareMathOperator{\End}{End}
\DeclareMathOperator{\id}{id}
\DeclareMathOperator{\im}{im}       
\newtheorem{theo}{Theorem}[section]
\newtheorem{lem}[theo]{Lemma}
\newtheorem*{lem*}{Lemma}
\newtheorem{cor}[theo]{Corollary}
\theoremstyle{definition}
\newtheorem{defin}[theo]{Definition}
\numberwithin{equation}{section}
	\newcommand{\comments}[1]{
		\ \\
		{\color{red}
			\textbf{RG:} #1
		}
		\\
	}
	\newcommand{\comments}[1]{}
	\newcommand{\details}[1]{
		\ \\
		{\color{OliveGreen}
			\textbf{Details:} #1
		}
		\\
	}
	\newcommand{\details}[1]{}
\begin{document}
	%
	
	\title{Decomposing Frobenius Heisenberg categories}
	
	\author{Raj Gandhi}
	\address{
		Department of Mathematics and Statistics \\
		University of Ottawa
	}
	\email{rgand037@uottawa.ca}

	\begin{abstract}
	We give two alternate presentations of the Frobenius Heisenberg category, $\Heis_{F,k}$, defined by Savage, when the Frobenius algebra $F=F_1\oplus\dotsb\oplus F_n$ decomposes as a direct sum of Frobenius subalgebras. In these alternate presentations, the morphism spaces of $\Heis_{F,k}$ are given in terms of planar diagrams consisting of strands "colored" by integers $i=1,\dotsc,n$, where a strand of color $i$ carries tokens labelled by elements of $F_i.$ In addition, we prove that when $F$ decomposes this way, the tensor product of Frobenius Heisenberg categories, $\Heis_{F_1,k}\otimes\dotsb\otimes\Heis_{F_n,k},$ is equivalent to a certain subcategory of the Karoubi envelope of $\Heis_{F,k}$ that we call the $\textit{partial}$ Karoubi envelope of $\Heis_{F,k}$.
	\end{abstract}
	
	\subjclass[2010]{18D10}
	\keywords{Categorification, Frobenius algebra, Heisenberg algebra, diagrammatic calculus, monoidal category}
	
	\maketitle
	\thispagestyle{empty}
	
	
	\section{Introduction}
		In \cite{Kho14}, Khovanov developed a graphical calculus for the induction and restriction functors related to the representation theory of the symmetric groups. This led him to define a monoidal category, $\mathcal{H},$ whose Grothendieck ring, he conjectured, is isomorphic to the infinite-dimensional Heisenberg algebra, and he called $\mathcal{H}$ the $\textit{Heisenberg category}.$ This conjecture has recently been proven in \cite{BSW18}. Khovanov's construction has been generalized to q-deformations in \cite{LS13}, to categories depending on a graded Frobenius superalgebra in \cite{CL12} and \cite{RS17}, and to higher level in \cite{MS17}. 
		
		In \cite{Bru17}, Brundan gave two alternate presentations of the higher level Heisenberg categories of \cite{MS17}. One involved an "inversion relation," while the other was similar to the original presentation of the Heisenberg category, given in \cite{Kho14}, but with fewer relations.
		
		In \cite{S17}, Savage used the inversion relation approach of Brundan to unify the higher level categories of \cite{MS17} and the categories of \cite{RS17}. He associated to each graded Frobenius superalgebra, $F,$ and integer, $k,$ a graded Heisenberg supercategory, which he called a $\textit{Frobenius Heisenberg}$ $\textit{category}$ and denoted $\Heis_{F,k}$. 
		
		In the present paper, we give two alternate presentations of  $\Heis_{F,k}$ in the case where $F=F_1\oplus\dotsb\oplus F_n$ decomposes as a direct sum of Frobenius subalgebras. Throughout this paper, we assume the trace map of $F$ is symmetric, we ignore gradings, and we work in the non-super setting. Although we expect analogous presentations of $\Heis_{F,k}$ to hold without these assumptions, we make these assumptions to simplify exposition. In the original presentation of $\Heis_{F,k}$ found in \cite[Def. 1.1]{S17}, the morphism space of $\Heis_{F,k}$ is given in terms of planar diagrams consisting of strands carrying tokens labelled by elements of $F$. In our new presentations of  $\Heis_{F,k}$, the morphism spaces of $\Heis_{F,k}$ are given in terms of planar diagrams consisting of strands \textit{colored} by integers $i=1,\dotsc,n$, where a strand of color $i$ carries tokens labelled by elements of $F_i$. Colored morphisms are common in the categorification literature. For example, when $F$ is a zigzag algebra and $k=-1$, $\Heis_{F,k}$ is the Heisenberg category of Cautis and Licata in \cite{CL12}. The presentation given in \cite{CL12} is in terms of colored planar diagrams. The morphism spaces of Kac-Moody 2-categories are also colored (by elements of the weight lattice). See, for example, \cite{R} and \cite{KL10}. Our new presentation of $\Heis_{F,k}$ takes advantage of the decomposition of $F$ to decompose the morphism spaces of $\Heis_{F,k},$ which results in a simpler way to view $\Heis_{F,k}$. An example of the simplifications that can be made using these new presentations of $\Heis_{F,k}$ is given by the proof of \cref{cor:equivalence} in \cref{section:equivalence}, where we use one of these new presentations to show that the tensor product of Frobenius Heisenberg categories, $\Heis_{F_1,k}\otimes\dotsb\otimes \Heis_{F_n,k}$, is equivalent to a certain subcategory of the Karoubi envelope of $\Heis_{F,k}$ that we call the $\textit{partial}$ Karoubi envelope of $\Heis_{F,k}$.
		
		This paper is organized as follows. In \cref{section:review}, we review the definition and basic facts about the Frobenius Heisenberg category, $\Heis_{F,k}$, introduced in \cite{S17}, and we specialize $F$ to a Frobenius algebra that decomposes as a direct sum of Frobenius subalgebras, $F_1\oplus\dotsb\oplus F_n$. In \cref{section:alternate-presentation}, we prove two alternate presentations of $\Heis_{F,k}$. In \cref{section:equivalence} we recall the definition of the tensor product of categories, and we define the partial Karoubi envelope of $\Heis_{F,k},$ denoted $PK\left(\Heis_{F,k}\right).$ We then show that $PK\left(\Heis_{F,k}\right)$ is equivalent to $\Heis_{F_1,k}\otimes\dotsb\otimes \Heis_{F_n,k}$.
		
		\subsection*{Acknowledgements}
		This research was supported by an Undergraduate Student Research Award from the Natural Sciences and Engineering Research Council of Canada and was supervised by Professor Alistair Savage. The author is grateful to Professor Savage for his patience and guidance throughout this project.
	\section{The Frobenius Heisenberg category}\label{section:review}
	In this section, we recall the definition and basic properties of the Frobenius Heisenberg category first defined in \cite{S17}.
	Fix a commutative ground ring $\kk$. Let $F=F_1\oplus\dotsb\oplus F_n$, be the direct sum of Frobenius $\kk$-algebras, and view $F$ as a Frobenius algebra with trace map $\tr(f_1,\dotsc,f_n) = \sum_{i=1}^{n}\tr_i(f_i)$, where $\tr_i$ denotes the trace of $F_i$. View $F_i$ as a subalgebra of $F$ by identifying an element $f\in F_i$ with the element $(0,\dotsc,0,f,0,\dotsc,0)\in F$, where $f$ appears in the $i^{th}$ position, and define $e_i:=1_{F_i}$.
	For each $i$, we fix a basis $B_i$ of $F_i.$ Thus, $B:=B_1\cup\dotsb\cup B_n$ is a basis of $F$. 
	Furthermore, if $b_{i}\in B_i$ and $b_{j}\in B_j$, $i\neq j,$ then it is easy to see that $b_ib_j=0,$ and hence
	\begin{equation} \label{eq:dual-basis-def}
	\tr(b_i b_j) = 0
	\end{equation}
	as well. We let $\{\chk{b} | b\in B\}$ denote the left dual basis of $B,$ so that 
	\begin{equation}
		\tr(\chk{a}b)=\delta_{a,b}, \quad a,b\in B.
	\end{equation}
	
	Fix $k \in \Z$. We will recall the definition of $\Heis_{F,k}$ given in \cite[Def. 1.1]{S17}, and impose some additional assumptions. Namely, we assume the trace map of $F$ is symmetric, we ignore gradings, and we are in the non-super setting.
	\begin{defin} \label{def:H1}
		The category $\Heis_{F,k}$ is the strict $\kk$-linear monoidal category defined as follows. The objects are generated by $\sQ_+$ and $\sQ_-$, and we use juxtaposition to denote tensor product. We will denote the unit object by $\mathbb{1}$. The morphisms of $\Heis_{F,k}$ are generated by
		\[
		x =
		\begin{tikzpicture}[anchorbase]
		\draw[->] (0,0) -- (0,0.6);
		\redcircle{(0,0.3)};
		\end{tikzpicture} \ \colon Q_+\to Q_+
		\ ,\quad
		s =
		\begin{tikzpicture}[anchorbase]
		\draw [->](0,0) -- (0.6,0.6);
		\draw [->](0.6,0) -- (0,0.6);
		\end{tikzpicture} \ \colon Q_+Q_+\to Q_+Q_+
		\ ,\quad
		c =
		\begin{tikzpicture}[anchorbase]
		\draw[->] (0,.2) -- (0,0) arc (180:360:.3) -- (.6,.2);
		\end{tikzpicture} \ \colon \mathbb{1}\to Q_-Q_+
		\ ,\]
		\[
		d =
		\begin{tikzpicture}[anchorbase]
		\draw[->] (0,-.2) -- (0,0) arc (180:0:.3) -- (.6,-.2);
		\end{tikzpicture} \ \colon Q_+Q_-\to \mathbb{1}
		\ ,\quad
		\beta_f =
		\begin{tikzpicture}[anchorbase]
		\draw[->] (0,0) -- (0,0.6);
		\bluedot{(0,0.3)} node[anchor=west, color=black] {$f$};
		\end{tikzpicture} \ \colon Q_+\to Q_+
		\ ,\ f \in F.
		\]
		We refer to the decoration representing $x$ as a \emph{dot} and the decorations representing $\beta_f$, $f \in F$, as \emph{tokens}. For $n \ge 1$, we denote the $n$-th power $x^n$ of $x$ by labelling the dot with the exponent $n$:
		\[
		x^n =
		\begin{tikzpicture}[anchorbase]
		\draw[->] (0,0) -- (0,1);
		\redcircle{(0,0.5)} node[anchor=west, color=black] {$n$};
		\end{tikzpicture}
		\]
		We also define
		\begin{equation} \label{eq:t-def1}
		t \colon \sQ_+ \sQ_- \to \sQ_- \sQ_+,\quad
		t =
		\begin{tikzpicture}[anchorbase]
		\draw [->](0,0) -- (0.6,0.6);
		\draw [<-](0.6,0) -- (0,0.6);
		\end{tikzpicture}
		\ :=\
		\begin{tikzpicture}[anchorbase,scale=0.6]
		\draw[->] (0.3,0) -- (-0.3,1);
		\draw[->] (-0.75,1) -- (-0.75,0.5) .. controls (-0.75,0.2) and (-0.5,0) .. (0,0.5) .. controls (0.5,1) and (0.75,0.8) .. (0.75,0.5) -- (0.75,0);
		\end{tikzpicture}
		\ .
		\end{equation}
		The identity morphisms on $Q_+$ and $Q_-$ will be denoted by single undecorated upwards or downwards strands, respectively:
		\[
		\begin{tikzpicture}[anchorbase]
		\draw[->] (0,0) -- (0,1);
		\end{tikzpicture} = id_{Q_+}
		, \qquad 
		\begin{tikzpicture}[anchorbase]
		\draw[<-] (0,0) -- (0,1);
		\end{tikzpicture} = id_{Q_-}. 
		\]
		We impose three sets of relations:
		\begin{enumerate}[wide]
			\item \emph{Affine wreath product algebra relations}:  We have a homomorphism of algebras
			\begin{equation} \label{rel:token-homom1}
			F \to \End \sQ_+,\quad f \mapsto \beta_f,
			\end{equation}
			so that, in particular,
			\begin{equation} \label{rel:token-colide-up1}
			\begin{tikzpicture}[anchorbase]
			\draw[->] (0,0) -- (0,1);
			\bluedot{(0,0.35)} node[anchor=east,color=black] {$g$};
			\bluedot{(0,0.7)} node[anchor=east,color=black] {$f$};
			\end{tikzpicture}
			\ = \
			\begin{tikzpicture}[anchorbase]
			\draw[->] (0,0) -- (0,1);
			\bluedot{(0,0.5)} node[anchor=west,color=black] {$fg$};
			\end{tikzpicture}
			,\quad f,g \in F.
			\end{equation}		
			Note also that, for $a\in \kk,$ $f_j\in F,$ $j=1,\dotsc,m,$ $m>0,$ we have
			
			\noindent\begin{minipage}{0.5\linewidth}
			\begin{equation} \label{rel:token-sum-up1}
			\begin{tikzpicture}[anchorbase]
			\draw[->] (0,0) -- (0,1);
			\bluedot{(0,0.5)} node[anchor=east,color=black] {$\left(\sum_{j=1}^{m}f_j\right)$};
			\end{tikzpicture}
			\ = \
			\sum_{j=1}^{m}\left(
			\begin{tikzpicture}[anchorbase]
			\draw[->] (0,0) -- (0,1);
			\bluedot{(0,0.5)} node[anchor=west,color=black] {$f_j$};
			\end{tikzpicture}\right)
			\ ,
			\end{equation}	
			\end{minipage}
			\begin{minipage}{0.5\linewidth}
				\begin{equation} \label{rel:token-scale-up1}
				\begin{tikzpicture}[anchorbase]
				\draw[->] (0,0)  -- (0,1);
				\bluedot{(0,0.5)} node[anchor=east,color=black] {$\left(af_1\right)$};
				\end{tikzpicture}
				\ = \
				a\left(
				\begin{tikzpicture}[anchorbase]
				\draw[->] (0,0) -- (0,1);
				\bluedot{(0,0.5)} node[anchor=west,color=black] {$f_1$};
				\end{tikzpicture}\right)
				\ .
				\end{equation}		
			\end{minipage}
		\indent	Furthermore, we impose the following relations for all $f \in F$:
			
			\noindent\begin{minipage}{0.33\linewidth}
				\begin{equation} \label{rel:braid-up1}
				\begin{tikzpicture}[anchorbase]
				\draw[->] (0,0) -- (1,1);
				\draw[->] (1,0) -- (0,1);
				\draw[->] (0.5,0) .. controls (0,0.5) .. (0.5,1);
				\end{tikzpicture}
				\ =\
				\begin{tikzpicture}[anchorbase]
				\draw[->] (0,0) -- (1,1);
				\draw[->] (1,0) -- (0,1);
				\draw[->] (0.5,0) .. controls (1,0.5) .. (0.5,1);
				\end{tikzpicture}\ ,
				\end{equation}
			\end{minipage}%
			\begin{minipage}{0.33\linewidth}
				\begin{equation} \label{rel:doublecross-up1}
				\begin{tikzpicture}[anchorbase]
				\draw[->] (0,0) .. controls (0.5,0.5) .. (0,1);
				\draw[->] (0.5,0) .. controls (0,0.5) .. (0.5,1);
				\end{tikzpicture}
				\ =\
				\begin{tikzpicture}[anchorbase]
				\draw[->] (0,0) --(0,1);
				\draw[->] (0.5,0) -- (0.5,1);
				\end{tikzpicture}\ ,
				\end{equation}
			\end{minipage}
			\begin{minipage}{0.33\linewidth}
				\begin{equation} \label{rel:dot-token-up-slide1}
				\begin{tikzpicture}[anchorbase]
				\draw[->] (0,0) -- (0,1);
				\redcircle{(0,0.3)};
				\bluedot{(0,0.6)} node[anchor=east, color=black] {$f$};
				\end{tikzpicture}
				\ =\
				\begin{tikzpicture}[anchorbase]
				\draw[->] (0,0) -- (0,1);
				\redcircle{(0,0.6)};
				\bluedot{(0,0.3)} node[anchor=west, color=black] {$f$};
				\end{tikzpicture}\ ,
				\end{equation}
			\end{minipage}\par\vspace{\belowdisplayskip}
			
			\noindent\begin{minipage}{0.4\linewidth}
				\begin{equation} \label{rel:tokenslide-up-right1}
				\begin{tikzpicture}[anchorbase]
				\draw[->] (0,0) -- (1,1);
				\draw[->] (1,0) -- (0,1);
				\bluedot{(.25,.25)} node [anchor=south east, color=black] {$f$};
				\end{tikzpicture}
				\ =\
				\begin{tikzpicture}[anchorbase]
				\draw[->](0,0) -- (1,1);
				\draw[->](1,0) -- (0,1);
				\bluedot{(0.75,.75)} node [anchor=north west, color=black] {$f$};
				\end{tikzpicture}\ ,
				\end{equation}
			\end{minipage}%
			\begin{minipage}{0.6\linewidth}
				\begin{equation} \label{rel:dotslide1.1}
				\begin{tikzpicture}[anchorbase]
				\draw[->] (0,0) -- (1,1);
				\draw[->] (1,0) -- (0,1);
				\redcircle{(0.25,.75)};
				\end{tikzpicture}
				\ -\
				\begin{tikzpicture}[anchorbase]
				\draw[->] (0,0) -- (1,1);
				\draw[->] (1,0) -- (0,1);
				\redcircle{(.75,.25)};
				\end{tikzpicture}
				\ =\
				\sum_{b \in B}
				\begin{tikzpicture}[anchorbase]
				\draw[->] (0,0) -- (0,1);
				\draw[->] (0.5,0) -- (0.5,1);
				\bluedot{(0,0.5)} node[anchor=east, color=black] {$\chk{b}$};
				\bluedot{(0.5,0.5)} node[anchor=west, color=black] {$b$};
				\end{tikzpicture}\ .
				\end{equation}
			\end{minipage}\par\vspace{\belowdisplayskip}
			
			It follows that we also have the relations:
			
			\noindent\begin{minipage}{0.4\linewidth}
				\begin{equation} \label{rel:tokenslide-up-left1}
				\begin{tikzpicture}[anchorbase]
				\draw[->] (0,0) -- (1,1);
				\draw[->] (1,0) -- (0,1);
				\bluedot{(.75,.25)} node [anchor=south west, color=black] {$f$};
				\end{tikzpicture}
				\ =\
				\begin{tikzpicture}[anchorbase]
				\draw[->] (0,0) -- (1,1);
				\draw[->] (1,0) -- (0,1);
				\bluedot{(0.25,.75)} node [anchor=north east, color=black] {$f$};
				\end{tikzpicture}\ ,
				\end{equation}
			\end{minipage}%
			\begin{minipage}{0.6\linewidth}
				\begin{equation} \label{rel:dotslide2.1}
				\begin{tikzpicture}[anchorbase]
				\draw[->] (0,0) -- (1,1);
				\draw[->] (1,0) -- (0,1);
				\redcircle{(0.25,.25)};
				\end{tikzpicture}
				\ -\
				\begin{tikzpicture}[anchorbase]
				\draw[->] (0,0) -- (1,1);
				\draw[->] (1,0) -- (0,1);
				\redcircle{(.75,.75)};
				\end{tikzpicture}
				\ =\
				\sum_{b \in B}
				\begin{tikzpicture}[anchorbase]
				\draw[->] (0,0) -- (0,1);
				\draw[->] (0.5,0) -- (0.5,1);
				\bluedot{(0,0.5)} node[anchor=east, color=black] {$b$};
				\bluedot{(0.5,0.5)} node[anchor=west, color=black] {$\chk{b}$};
				\end{tikzpicture}\ .
				\end{equation}
			\end{minipage}\par\vspace{\belowdisplayskip}
			
			\item \emph{Right adjunction relations}:  We impose the following relations:
			
			\noindent\begin{minipage}{0.5\linewidth}
				\begin{equation} \label{rel:right-adjunction-up1}
				\begin{tikzpicture}[anchorbase]
				\draw[->] (0,0) -- (0,0.6) arc(180:0:0.2) -- (0.4,0.4) arc(180:360:0.2) -- (0.8,1);
				\end{tikzpicture}
				\ =\
				\begin{tikzpicture}[anchorbase]
				\draw[->] (0,0) -- (0,1);
				\end{tikzpicture}\ ,
				\end{equation}
			\end{minipage}%
			\begin{minipage}{0.5\linewidth}
				\begin{equation} \label{rel:right-adjunction-down1}
				\begin{tikzpicture}[anchorbase]
				\draw[->] (0,1) -- (0,0.4) arc(180:360:0.2) -- (0.4,0.6) arc(180:0:0.2) -- (0.8,0);
				\end{tikzpicture}
				\ =\
				\begin{tikzpicture}[anchorbase]
				\draw[<-] (0,0) -- (0,1);
				\end{tikzpicture}\ .
				\end{equation}
			\end{minipage}\par\vspace{\belowdisplayskip}
			
			\item \emph{Inversion relation}: The following matrix of morphisms is an isomorphism in the additive envelope of $\Heis_{F,k}$:
			\begin{gather} \label{eq:inversion-relation-pos-l.1}
			\left[
			\begin{tikzpicture}[anchorbase]
			\draw [->](0,0) -- (0.6,0.6);
			\draw [<-](0.6,0) -- (0,0.6);
			\end{tikzpicture}
			\quad
			\begin{tikzpicture}[anchorbase]
			\draw[->] (0,0) -- (0,0.7) arc (180:0:.3) -- (0.6,0);
			\redcircle{(0,0.6)} node[anchor=west,color=black] {$r$};
			\bluedot{(0,0.2)} node[anchor=west,color=black] {$\chk{b}$};
			\end{tikzpicture}\ ,\
			0 \le r \le k-1,\ b \in B
			\right]^T
			\ \colon \sQ_+ \sQ_- \to \sQ_- \sQ_+ \oplus \one ^{\oplus k \dim F} \quad \text{if } k \ge 0,
			\\ \label{eq:inversion-relation-neg-l.1}
			\left[
			\begin{tikzpicture}[anchorbase]
			\draw [->](0,0) -- (0.6,0.6);
			\draw [<-](0.6,0) -- (0,0.6);
			\end{tikzpicture}
			\quad
			\begin{tikzpicture}[anchorbase]
			\draw[->] (0,1) -- (0,0.3) arc (180:360:.3) -- (0.6,1);
			\redcircle{(0.6,0.7)} node[anchor=east,color=black] {$r$};
			\bluedot{(0.6,0.3)} node[anchor=east,color=black] {$\chk{b}$};
			\end{tikzpicture}\ ,\
			0 \le r \le -k-1,\ b \in B
			\right]
			\ \colon \sQ_+ \sQ_- \oplus \one^{\oplus (-k \dim F)} \to \sQ_- \sQ_+ \quad \text{if } k < 0.
			\end{gather}
			(The matrix \cref{eq:inversion-relation-pos-l.1} is of size $(1 + k \dim F) \times 1$, while the matrix \cref{eq:inversion-relation-neg-l.1} is of size $1 \times (1 + k \dim F)$.)  Note that these conditions are independent of the choice of basis $B$ of $F$.
		\end{enumerate}
	\end{defin}
	
	The affine wreath product relations correspond to the defining relations of affine wreath product algebras. See \cite{Sav17Updated} for further discussion of these algebras. In our case, the category $\Heis_{F,k}$ is strictly pivotal (see \cite[page 4]{S17}). In particular, there is isotopy invariance of all affine wreath product relations, giving a larger set of relations that hold in $\Heis_{F,k}$.
	
	For the sake of having notation, we define (as in \cite[page 11]{S17})
	\begin{equation}\label{eq:left-crossing1}
	t' =
	\begin{tikzpicture}[anchorbase]
	\draw [<-](0,0) -- (0.6,0.6);
	\draw [->](0.6,0) -- (0,0.6);
	\end{tikzpicture}
	\ \colon \sQ_- \sQ_+ \to \sQ_+ \sQ_-,
	\end{equation}
	and
	\begin{equation} \label{eq:decorated-left-cap-cup1}
	\begin{tikzpicture}[>=To,baseline={([yshift=1ex]current bounding box.center)}]
	\draw[<-] (0,0.2) -- (0,0) arc (180:360:.3) -- (0.6,0.2);
	\greensquare{(0.3,-0.3)} node[anchor=north,color=black] {\squarelabel{(r,b)}};
	\end{tikzpicture}
	\ \colon \one \to \sQ_+ \sQ_-
	,\qquad
	\begin{tikzpicture}[>=To,baseline={([yshift=-2ex]current bounding box.center)}]
	\draw[<-] (0,-0.2) -- (0,0) arc (180:0:.3) -- (0.6,-0.2);
	\greensquare{(0.3,0.3)} node[anchor=south,color=black] {\squarelabel{(r,b)}};
	\end{tikzpicture}
	\ \colon \sQ_+ \sQ_- \to \one,
	\end{equation}
	for $0 \le r < k$ or $0 \le r < -k$, respectively, by declaring that
	\begin{equation} \label{eq:inversion-leftcup-def1}
	\left[
	\begin{tikzpicture}[anchorbase]
	\draw [<-](0,0) -- (0.6,0.6);
	\draw [->](0.6,0) -- (0,0.6);
	\end{tikzpicture}
	\quad
	\begin{tikzpicture}[>=stealth,baseline={([yshift=1ex]current bounding box.center)}]
	\draw[<-] (0,0.2) -- (0,0) arc (180:360:.3) -- (0.6,0.2);
	\greensquare{(0.3,-0.3)} node[anchor=north,color=black] {\squarelabel{(r,b)}};
	\end{tikzpicture},\
	0 \le r \le k-1,\ b \in B
	\right]
	=
	\left(\left[
	\begin{tikzpicture}[anchorbase]
	\draw [->](0,0) -- (0.6,0.6);
	\draw [<-](0.6,0) -- (0,0.6);
	\end{tikzpicture}
	\quad
	\begin{tikzpicture}[anchorbase]
	\draw[->] (0,0) -- (0,0.7) arc (180:0:.3) -- (0.6,0);
	\redcircle{(0,0.6)} node[anchor=west,color=black] {$r$};
	\bluedot{(0,0.2)} node[anchor=west,color=black] {$\chk{b}$};
	\end{tikzpicture}\ ,\
	0 \le r \le k-1,\ b \in B
	\right]^T \right)^{-1}
	\quad \text{if } k \ge 0,
	\end{equation}
	or
	\begin{equation} \label{eq:inversion-leftcap-def1}
	\left[
	\begin{tikzpicture}[anchorbase]
	\draw [<-](0,0) -- (0.6,0.6);
	\draw [->](0.6,0) -- (0,0.6);
	\end{tikzpicture}
	\quad
	\begin{tikzpicture}[>=stealth,baseline={([yshift=-2ex]current bounding box.center)}]
	\draw[<-] (0,-0.2) -- (0,0) arc (180:0:.3) -- (0.6,-0.2);
	\greensquare{(0.3,0.3)} node[anchor=south,color=black] {\squarelabel{(r,b)}};
	\end{tikzpicture},\
	0 \le r \le -k-1,\ b \in B
	\right]^T
	=
	\left[
	\begin{tikzpicture}[anchorbase]
	\draw [->](0,0) -- (0.6,0.6);
	\draw [<-](0.6,0) -- (0,0.6);
	\end{tikzpicture}
	\quad
	\begin{tikzpicture}[anchorbase]
	\draw[->] (0,1) -- (0,0.3) arc (180:360:.3) -- (0.6,1);
	\redcircle{(0.6,0.7)} node[anchor=east,color=black] {$r$};
	\bluedot{(0.6,0.3)} node[anchor=east,color=black] {$\chk{b}$};
	\end{tikzpicture}\ ,\
	0 \le r \le -k-1,\ b \in B
	\right]^{-1}
	\quad \text{if } k < 0.
	\end{equation}	
	As in \cite[page 11]{S17}, we extend the definition of the decorated left cups and caps by linearity in the second argument of the label.	
	
	The relations \cref{rel:down-up-doublecross1,rel:up-down-doublecross1} hold in $\Heis_{F,k}$ for all $k:$
	
	\noindent \begin{minipage}{0.5\linewidth}
		\begin{equation} \label{rel:up-down-doublecross1}
		\begin{tikzpicture}[anchorbase]
		\draw[->] (0,0) -- (0,1);
		\draw[<-] (0.5,0) -- (0.5,1);
		\end{tikzpicture}
		\ =\
		\begin{tikzpicture}[anchorbase]
		\draw[->] (0,0) .. controls (0.5,0.5) .. (0,1);
		\draw[<-] (0.5,0) .. controls (0,0.5) .. (0.5,1);
		\end{tikzpicture}
		\ + \sum_{r=0}^{k-1} \sum_{b \in B}\
		\begin{tikzpicture}[anchorbase]
		\draw[->] (0,0) -- (0,0.7) arc (180:0:0.3) -- (0.6,0);
		\draw[<-] (0,2) -- (0,1.8) arc (180:360:0.3) -- (0.6,2);
		\redcircle{(0,0.6)} node[anchor=west,color=black] {$r$};
		\bluedot{(0,0.2)} node[anchor=west,color=black] {$\chk{b}$};
		\greensquare{(0.3,1.5)} node[anchor=north west,color=black] {\squarelabel{(r,b)}};
		\end{tikzpicture}
		\ ,
		\end{equation}
	\end{minipage}%
	\noindent\begin{minipage}{0.5\linewidth}
		\begin{equation} \label{rel:down-up-doublecross1}
		\begin{tikzpicture}[anchorbase]
		\draw[<-] (0,0) -- (0,1);
		\draw[->] (0.5,0) -- (0.5,1);
		\end{tikzpicture}
		\ =\
		\begin{tikzpicture}[anchorbase]
		\draw[<-] (0,0) .. controls (0.5,0.5) .. (0,1);
		\draw[->] (0.5,0) .. controls (0,0.5) .. (0.5,1);
		\end{tikzpicture}
		\ + \sum_{r=0}^{-k-1} \sum_{b \in B}\
		\begin{tikzpicture}[anchorbase]
		\draw[->] (0,2) -- (0,1.3) arc (180:360:0.3) -- (0.6,2);
		\draw[<-] (0,0) -- (0,0.2) arc (180:0:0.3) -- (0.6,0);
		\redcircle{(0.6,1.7)} node[anchor=east,color=black] {$r$};
		\bluedot{(0.6,1.3)} node[anchor=east,color=black] {$\chk{b}$};
		\greensquare{(0.3,0.5)} node[anchor=west,color=black] {\squarelabel{(r,b)}};
		\end{tikzpicture}
		\ .
		\end{equation}
	\end{minipage}\par\vspace{\belowdisplayskip}
	\noindent When $k > 0$, the following relations, \cref{rel:inversion1-xi-greater1,rel:inversion2-xi-greater1,rel:inversion3-xi-greater1}, hold in $\Heis_{F,k}$. For all $0 \le r,s \le k-1$ and $b,c\in B$,
	
	\noindent\begin{minipage}{0.33\linewidth}
		\begin{equation} \label{rel:inversion1-xi-greater1}
		\begin{tikzpicture}[anchorbase]
		\draw[<-] (0,0) -- (0,1);
		\draw[->] (0.5,0) -- (0.5,1);
		\end{tikzpicture}
		\ =\
		\begin{tikzpicture}[anchorbase]
		\draw[<-] (0,0) .. controls (0.5,0.5) .. (0,1);
		\draw[->] (0.5,0) .. controls (0,0.5) .. (0.5,1);
		\end{tikzpicture}
		\end{equation}
	\end{minipage}
	\begin{minipage}{0.33\linewidth}
		\begin{equation} \label{rel:inversion2-xi-greater1}
		\begin{tikzpicture}[anchorbase]
		\draw[->] (0,0) .. controls (0.3,-0.3) and (0.6,-0.3) .. (0.6,-0.6) arc(360:180:0.3) .. controls (0,-0.3) and (0.3,-0.3) .. (0.6,0);
		\greensquare{(0.3,-0.9)} node[anchor=north,color=black] {$(r,b)$};
		\end{tikzpicture}
		\ =\
		\begin{tikzpicture}[anchorbase]
		\draw[->] (0.6,-0.5) .. controls (0.1,0) and (0,-0.1) .. (0,0.3) -- (0,0.7) arc (180:0:.3) -- (0.6,0.3) .. controls (0.6,-0.1) and (0.5,0) .. (0,-0.5);
		\redcircle{(0,0.6)} node[anchor=west,color=black] {$r$};
		\bluedot{(0,0.3)} node[anchor=west,color=black] {$\chk{b}$};
		\end{tikzpicture}
		\ =0,
		\end{equation}
	\end{minipage}%
	\begin{minipage}{0.33\linewidth}
		\begin{equation} \label{rel:inversion3-xi-greater1}
		\begin{tikzpicture}[anchorbase]
		\draw[<-] (0,0.3) arc(90:450:0.3);
		\redcircle{(-0.3,0)} node[anchor=east,color=black] {$r$};
		\bluedot{(0.3,0)} node[anchor=west,color=black] {$\chk{b}$};
		\greensquare{(0,-0.3)} node[anchor=north,color=black] {$(s,c)$};
		\end{tikzpicture}
		= \delta_{r,s} \delta_{b,c}.
		\end{equation}
	\end{minipage}\par\vspace{\belowdisplayskip}

	\noindent When $k < 0$, the following relations, \cref{rel:inversion1-xi-smaller1,rel:inversion2-xi-smaller1,rel:inversion3-xi-smaller1}, hold in $\Heis_{F,k}$. For all $0 \le r,s \le -k-1$ and $b,c\in B$,

	\noindent\begin{minipage}{0.33\linewidth}
		\begin{equation} \label{rel:inversion1-xi-smaller1}
		\begin{tikzpicture}[anchorbase]
		\draw[->] (0,0) -- (0,1);
		\draw[<-] (0.5,0) -- (0.5,1);
		\end{tikzpicture}
		\ =\
		\begin{tikzpicture}[anchorbase]
		\draw[->] (0,0) .. controls (0.5,0.5) .. (0,1);
		\draw[<-] (0.5,0) .. controls (0,0.5) .. (0.5,1);
		\end{tikzpicture},
		\end{equation}
	\end{minipage}
	\begin{minipage}{0.33\linewidth}
		\begin{equation} \label{rel:inversion2-xi-smaller1}
		\begin{tikzpicture}[anchorbase]
		\draw[->] (0,0) .. controls (0.3,0.3) and (0.6,0.3) .. (0.6,0.6) arc(0:180:0.3) .. controls (0,0.3) and (0.3,0.3) .. (0.6,0);
		\greensquare{(0.3,0.9)} node[anchor=south,color=black] {$(r,b)$};
		\end{tikzpicture}
		\ =\
		\begin{tikzpicture}[anchorbase]
		\draw[->] (0.6,0.5) .. controls (0.1,0) and (0,0.1) .. (0,-0.3) -- (0,-0.7) arc (180:360:.3) -- (0.6,-0.3) .. controls (0.6,0.1) and (0.5,0) .. (0,0.5);
		\redcircle{(0.6,-0.3)} node[anchor=east,color=black] {$r$};
		\bluedot{(0.6,-0.6)} node[anchor=west,color=black] {$\chk{b}$};
		\end{tikzpicture}
		\  = 0,
		\end{equation}
	\end{minipage}%
	\begin{minipage}{0.33\linewidth}
		\begin{equation} \label{rel:inversion3-xi-smaller1}
		\begin{tikzpicture}[anchorbase]
		\draw[->] (0,-0.3) arc(270:630:0.3);
		\redcircle{(0.3,0)} node[anchor=west,color=black] {$r$};
		\bluedot{(-0.3,0)} node[anchor=east,color=black] {$\chk{b}$};
		\greensquare{(0,0.3)} node[anchor=south,color=black] {$(s,c)$};
		\end{tikzpicture}
		= \delta_{r,s} \delta_{b,c}.
		\end{equation}
	\end{minipage}\par\vspace{\belowdisplayskip}
\noindent 	The inversion relation in $\Heis_{F,k}$ is equivalent to the relations \cref{rel:up-down-doublecross1,rel:inversion1-xi-greater1,rel:inversion2-xi-greater1,rel:inversion3-xi-greater1} when $k>0$, the relations \cref{rel:down-up-doublecross1,rel:inversion1-xi-smaller1,rel:inversion2-xi-smaller1,rel:inversion3-xi-smaller1} when $k<0,$ and the relations \cref{rel:down-up-doublecross1,rel:up-down-doublecross1} when $k=0.$ Note that when $k=0$, the relations \cref{rel:down-up-doublecross1,rel:up-down-doublecross1} imply that
\[
t' =
\begin{tikzpicture}[anchorbase]
\draw [<-](0,0) -- (0.6,0.6);
\draw [->](0.6,0) -- (0,0.6);
\end{tikzpicture}
\colon \sQ_- \sQ_+ \to \sQ_+ \sQ_-,
\]
is a generating morphism that is inverse to $t$, so that
\[
\begin{tikzpicture}[anchorbase]
\draw[->] (0,0) .. controls (0.5,0.5) .. (0,1);
\draw[<-] (0.5,0) .. controls (0,0.5) .. (0.5,1);
\end{tikzpicture}
\ =\
\begin{tikzpicture}[anchorbase]
\draw[->] (0,0) -- (0,1);
\draw[<-] (0.5,0) -- (0.5,1);
\end{tikzpicture}
,\qquad
\begin{tikzpicture}[anchorbase]
\draw[<-] (0,0) .. controls (0.5,0.5) .. (0,1);
\draw[->] (0.5,0) .. controls (0,0.5) .. (0.5,1);
\end{tikzpicture}
\ =\
\begin{tikzpicture}[anchorbase]
\draw[<-] (0,0) -- (0,1);
\draw[->] (0.5,0) -- (0.5,1);
\end{tikzpicture}
\ .
\]

	Note also that the following relations hold in $\Heis_{F,k}$, which will be useful in the proof of \cref{lem:colored-decorated-cups-caps}. The relations \cref{rel:tokenslide-right-crossing-up-left,rel:tokenslide-right-crossing-up-right,rel:tokenslide-left-crossing-up-left,rel:tokenslide-left-crossing-up-right} are a result of the relations \cref{rel:tokenslide-up-right1,rel:tokenslide-up-left1} due to isotopy invariance of all affine wreath product relations in $\Heis_{F,k}$. The relations \cref{rel:colored-right-cap-token-move,rel:colored-right-cup-token-move} are equivalent to the relations given by equation number $(2.8)$ in \cite{S17}.  
	
	\noindent\begin{minipage}{0.5\linewidth}
		\begin{equation} \label{rel:tokenslide-right-crossing-up-left}
		\begin{tikzpicture}[anchorbase]
		\draw[->] (0,0) -- (1,1);
		\draw[<-] (1,0) -- (0,1);
		\bluedot{(.75,.25)} node [anchor=south west, color=black] {$f$};
		\end{tikzpicture}
		\ =\
		\begin{tikzpicture}[anchorbase]
		\draw[->] (0,0) -- (1,1);
		\draw[<-] (1,0) -- (0,1);
		\bluedot{(0.25,.75)} node [anchor=north east, color=black] {$f$};
		\end{tikzpicture}\ ,
		\end{equation}
	\end{minipage}%
	\noindent\begin{minipage}{0.5\linewidth}
		\begin{equation} \label{rel:tokenslide-right-crossing-up-right}
		\begin{tikzpicture}[anchorbase]
		\draw[->] (0,0) -- (1,1);
		\draw[<-] (1,0) -- (0,1);
		\bluedot{(.25,.25)} node [anchor=south east, color=black] {$f$};
		\end{tikzpicture}
		\ =\
		\begin{tikzpicture}[anchorbase]
		\draw[->] (0,0) -- (1,1);
		\draw[<-] (1,0) -- (0,1);
		\bluedot{(0.75,.75)} node [anchor=north west, color=black] {$f$};
		\end{tikzpicture}\ ,
		\end{equation}
	\end{minipage}%

	\noindent\begin{minipage}{0.5\linewidth}
		\begin{equation} \label{rel:tokenslide-left-crossing-up-left}
		\begin{tikzpicture}[anchorbase]
		\draw[<-] (0,0) -- (1,1);
		\draw[->] (1,0) -- (0,1);
		\bluedot{(.75,.25)} node [anchor=south west, color=black] {$f$};
		\end{tikzpicture}
		\ =\
		\begin{tikzpicture}[anchorbase]
		\draw[<-](0,0) -- (1,1);
		\draw[->](1,0) -- (0,1);
		\bluedot{(0.25,.75)} node [anchor=north east, color=black] {$f$};
		\end{tikzpicture}\ ,
		\end{equation}
	\end{minipage}%
	\noindent\begin{minipage}{0.5\linewidth}
		\begin{equation} \label{rel:tokenslide-left-crossing-up-right}
		\begin{tikzpicture}[anchorbase]
		\draw[<-] (0,0) -- (1,1);
		\draw[->] (1,0) -- (0,1);
		\bluedot{(.25,.25)} node [anchor=south east, color=black] {$f$};
		\end{tikzpicture}
		\ =\
		\begin{tikzpicture}[anchorbase]
		\draw[<-](0,0) -- (1,1);
		\draw[->](1,0) -- (0,1);
		\bluedot{(0.75,.75)} node [anchor=north west, color=black] {$f$};
		\end{tikzpicture}\ ,
		\end{equation}
	\end{minipage}%

	\noindent \begin{minipage}{0.5\linewidth}
		\begin{equation} \label{rel:colored-right-cap-token-move}
		\begin{tikzpicture}[anchorbase]
		\draw[->] (0,0) -- (0,0.3) arc (180:0:0.3) -- (0.6,0);
		\bluedot{(0,0.3)} node[anchor=west,color=black] {$b$};
		\end{tikzpicture}
		=\begin{tikzpicture}[anchorbase]
		\draw[->] (0,0) -- (0,0.3) arc (180:0:0.3) -- (0.6,0);
		\bluedot{(0.6,0.3)} node[anchor=west,color=black] {$b$};
		\end{tikzpicture}
		\ ,
		\end{equation}
	\end{minipage}%
	\noindent\begin{minipage}{0.5\linewidth}
		\begin{equation} \label{rel:colored-right-cup-token-move}
		\begin{tikzpicture}[anchorbase]
		\draw[->] (0,1) -- (0,0.7) arc (180:360:0.3) -- (0.6,1);
		\bluedot{(0,0.7)} node[anchor=east,color=black] {$b$};
		\end{tikzpicture}
		=\begin{tikzpicture}[anchorbase]
		\draw[->] (0,1) -- (0,0.7) arc (180:360:0.3) -- (0.6,1);
		\bluedot{(0.6,0.7)} node[anchor=east,color=black] {$b$};
		\end{tikzpicture}
		\ .
		\end{equation}
	\end{minipage}\par\vspace{\belowdisplayskip}

	\begin{lem}\label{lem:colored-decorated-cups-caps}
		For $b\in B_i$, the following relations hold in $\Heis_{F,k}:$
		
		\noindent\begin{minipage}{0.5\linewidth}
		\begin{equation}\label{eq:colored-left-decorated-cup}
		\begin{tikzpicture}[>=To,baseline={([yshift=1ex]current bounding box.center)}]
		\draw[<-] (0,0.3) -- (0,0) arc (180:360:.3) -- (0.6,0.3);
		\greensquare{(0.3,-0.3)} node[anchor=north,color=black] {\squarelabel{(r,b)}};
		\bluedot{(0.6,0)} node[anchor=west,color=black] {$e_l$};
		\bluedot{(0,0)} node[anchor=east,color=black] {$e_j$};
		\end{tikzpicture}
		=\delta_{i,j}\delta_{i,l}\begin{tikzpicture}[>=To,baseline={([yshift=1ex]current bounding box.center)}]
		\draw[<-] (0,0.2) -- (0,0) arc (180:360:.3) -- (0.6,0.2);
		\greensquare{(0.3,-0.3)} node[anchor=north,color=black] {\squarelabel{(r,b)}};
		\end{tikzpicture}, \quad k>0,
		\end{equation}
		\end{minipage}
		\begin{minipage}{0.5\linewidth}
		\begin{equation}\label{eq:colored-left-decorated-cap}
		\begin{tikzpicture}[>=To,baseline={([yshift=-2ex]current bounding box.center)}]
		\draw[<-] (0,-0.3) -- (0,0) arc (180:0:.3) -- (0.6,-0.3);
		\greensquare{(0.3,0.3)} node[anchor=south,color=black] {\squarelabel{(r,b)}};
		\bluedot{(0.6,0)} node[anchor=west,color=black] {$e_l$};
		\bluedot{(0,0)} node[anchor=east,color=black] {$e_j$};
		\end{tikzpicture}
		=\delta_{i,j}\delta_{i,l}\begin{tikzpicture}[>=To,baseline={([yshift=-2ex]current bounding box.center)}]
		\draw[<-] (0,-0.2) -- (0,0) arc (180:0:.3) -- (0.6,-0.2);
		\greensquare{(0.3,0.3)} node[anchor=south,color=black] {\squarelabel{(r,b)}};
		\end{tikzpicture}, \quad k<0.
		\end{equation}
		\end{minipage}
	\end{lem}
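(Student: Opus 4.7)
The plan is to verify \cref{eq:colored-left-decorated-cup} using the characterization of the decorated left cup from \cref{eq:inversion-leftcup-def1}: it is one of the components of $\phi^{-1}$, where $\phi$ is the isomorphism matrix of \cref{eq:inversion-relation-pos-l.1}. Since $\phi$ is an isomorphism, two morphisms $\one\to\sQ_+\sQ_-$ coincide iff their compositions with each row of $\phi$ are equal. Hence it suffices to verify the identity after composing both sides with $t$ and with each decorated right cap $(s,\chk c)$, $0\le s\le k-1$, $c\in B$.

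For the $t$-component, I slide the tokens $e_j$ and $e_l$ upward through the crossing using \cref{rel:tokenslide-right-crossing-up-right} and \cref{rel:tokenslide-right-crossing-up-left}. This rewrites $t\circ(\mathrm{LHS})$ as a postcomposition of output-strand tokens with $t$ composed with the plain decorated left cup $(r,b)$, which vanishes by \cref{rel:inversion2-xi-greater1}. The composition of $t$ with the RHS is $\delta_{i,j}\delta_{i,l}$ times the same vanishing morphism, hence also zero.

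For each decorated right cap $(s,\chk c)$, the composition with the LHS produces a bubble with additional tokens on its strands. I slide $e_l$ from the downward right strand onto the upward left strand by isotoping it to the top of the cap's right strand and then applying \cref{rel:colored-right-cap-token-move}. The upward left strand now carries, from bottom to top, the tokens $e_j,\chk c,e_l$, which by \cref{rel:token-colide-up1} (with the dot $s$ sliding past via \cref{rel:dot-token-up-slide1}) combine into a single token with label $e_l\chk c e_j$. By linearity of the token $\beta_{(-)}$ in its label together with \cref{rel:inversion3-xi-greater1}, replacing $\chk c$ by an arbitrary $f\in F$ in the bubble yields $\delta_{r,s}\tr(bf)$. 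Substituting $f=e_l\chk c e_j$, using cyclicity of the (symmetric) trace and $b\in F_i$ (so that $e_jb=\delta_{i,j}b$ and $be_l=\delta_{i,l}b$), together with the dual basis relation $\tr(b\chk c)=\delta_{b,c}$, the bubble evaluates to $\delta_{i,j}\delta_{i,l}\delta_{r,s}\delta_{b,c}$. This matches the composition of the RHS with the same cap via \cref{rel:inversion3-xi-greater1}.

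The proof of \cref{eq:colored-left-decorated-cap} for $k<0$ is analogous, via the characterization of \cref{eq:inversion-leftcap-def1} together with \cref{rel:inversion2-xi-smaller1} and \cref{rel:inversion3-xi-smaller1} in place of their $k>0$ counterparts. The main obstacle is the bubble evaluation in the third paragraph, specifically the consolidation of the three tokens into a single combined label and the linearity step that reduces the whole evaluation to a trace computation that collapses by the dual basis relation and the orthogonality $e_je_i=\delta_{i,j}e_i$.
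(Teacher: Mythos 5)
Your proof is correct and is essentially the paper's own argument: postcomposing with the isomorphism \cref{eq:inversion-relation-pos-l.1} and comparing components is the same computation as the paper's expansion of the left-hand side through the resolution of the identity \cref{rel:up-down-doublecross1}, which is exactly the identity $\phi^{-1}\phi=\id$ written out diagrammatically. The component evaluations you perform (killing the $t$-component via \cref{rel:inversion2-xi-greater1} after token slides, and evaluating the bubbles via \cref{rel:colored-right-cap-token-move}, \cref{rel:dot-token-up-slide1}, linearity in the token label, and \cref{rel:inversion3-xi-greater1}) are precisely the steps cited in the paper's proof.
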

	\begin{proof}
	Attaching the morphism 
	\[
	\begin{tikzpicture}[>=To,baseline={([yshift=1ex]current bounding box.center)}]
	\draw[<-] (0,0.3) -- (0,0) arc (180:360:.3) -- (0.6,0.3);
	\greensquare{(0.3,-0.3)} node[anchor=north,color=black] {\squarelabel{(r,b)}};
	\bluedot{(0.6,0)} node[anchor=west,color=black] {$e_l$};
	\bluedot{(0,0)} node[anchor=east,color=black] {$e_j$};
	\end{tikzpicture}
	\]
	below both sides of \cref{rel:up-down-doublecross1} and using \cref{rel:inversion2-xi-greater1,rel:inversion3-xi-greater1,rel:dot-token-up-slide1,rel:tokenslide-right-crossing-up-left,rel:tokenslide-right-crossing-up-right,rel:colored-right-cap-token-move} gives \cref{eq:colored-left-decorated-cup}. A  similar argument shows that \cref{eq:colored-left-decorated-cap} holds in $\Heis_{F,k}$.
	\end{proof}
	
	\section{Alternate Presentations of $\Heis_{F,k}$}\label{section:alternate-presentation}
	
	In this section, we give two alternate presentations of $\Heis_{F,k}$. First we introduce notation. For $f\in F$ and $i=1,\dotsc,n$, define \begin{tikzpicture}[anchorbase]
	\draw[->] (0,0) node[anchor=north] {$i$}  -- (0,0.6);
	\bluedot{(0,0.3)} node[anchor=west,color=black] {$f$};
	\end{tikzpicture}:=\begin{tikzpicture}[anchorbase]
	\draw[->] (0,0)  -- (0,0.6);
	\bluedot{(0,0.3)} node[anchor=east,color=black] {$fe_i$};
	\end{tikzpicture}. We say that a strand indexed by $i$ is of $\textit{color}$ $i$. In particular, for $f=(f_1,\dotsc,f_n)\in F$, we can write 
	\[
	\begin{tikzpicture}[anchorbase]
	\draw[->] (0,0)  -- (0,0.6);
	\bluedot{(0,0.3)} node[anchor=east,color=black] {$f=(f_1,\dotsc,f_n)$};
	\end{tikzpicture}=\begin{tikzpicture}[anchorbase]
	\draw[->] (0,0) node[anchor=north] {$1$}  -- (0,0.6);
	\bluedot{(0,0.3)} node[anchor=east,color=black] {$f_1$};
	\end{tikzpicture}+\dotsb+\begin{tikzpicture}[anchorbase]
	\draw[->] (0,0) node[anchor=north] {$n$}  -- (0,0.6);
	\bluedot{(0,0.3)} node[anchor=east,color=black] {$f_n$};
	\end{tikzpicture}. 
	\]
	That is, we can write any $\beta_f\in \Heis_{F,k}$ as a sum of uni-colored strands. Additionally, we define the following notation for morphisms in $\Heis_{F,k}$. For $b\in B,$
	\[
	\begin{tikzpicture}[anchorbase]
	\draw [->](0,0) node[anchor=north] {$i$} -- (0.6,0.6);
	\draw [->](0.6,0) node[anchor=north] {$j$} -- (0,0.6);
	\end{tikzpicture}
	:=\begin{tikzpicture}[anchorbase]
	\draw [->](0,0)  -- (0.7,0.7);
	\draw [->](0.7,0) -- (0,0.7);
	\bluedot{(0.15,0.15)} node[anchor=east,color=black] {$e_i$}; 
	\bluedot{(0.55,0.15)} node[anchor=west,color=black] {$e_j$}; 
	\end{tikzpicture}
	\ , \quad
	\begin{tikzpicture}[anchorbase]
	\draw[->] (0,.2) node[anchor=south] {$i$} -- (0,0) arc (180:360:.3) -- (.6,.2);
	\end{tikzpicture}
	:=\begin{tikzpicture}[anchorbase]
	\draw[->] (0,.2) -- (0,0) arc (180:360:.3) -- (.6,.2);
	\bluedot{(0,0)} node[anchor=east,color=black] {$e_i$}; 
	\end{tikzpicture}
	\ ,\quad
	\begin{tikzpicture}[anchorbase]
	\draw[->] (0,-.2) node[anchor=north] {$i$} -- (0,0) arc (180:0:.3) -- (.6,-.2);
	\end{tikzpicture}
	:=\begin{tikzpicture}[anchorbase]
	\draw[->] (0,-.2) -- (0,0) arc (180:0:.3) -- (.6,-.2);
	\bluedot{(0,0)} node[anchor=east,color=black] {$e_i$}; 
	\end{tikzpicture}
	\ , \quad
	\begin{tikzpicture}[anchorbase]
	\draw[->] (0,0)  node[anchor=north] {$i$} -- (0,0.8);
	\redcircle{(0,0.4)};
	\end{tikzpicture}
	:=\begin{tikzpicture}[anchorbase]
	\draw[->] (0,0) -- (0,1);
	\redcircle{(0,0.7)};
	\bluedot{(0,0.3)} node[anchor= west,color=black] {$e_i$};
	\end{tikzpicture},
	\]
	\[
	\begin{tikzpicture}[>=To,baseline={([yshift=1ex]current bounding box.center)}]
	\draw[<-] (0,0.2) -- (0,0) arc (180:360:.3) -- (0.6,0.2) node[anchor=south] {$i$};
	\greensquare{(0.3,-0.3)} node[anchor=north,color=black] {\squarelabel{(r,b)}};
	\end{tikzpicture}	
	:=\begin{tikzpicture}[>=To,baseline={([yshift=1ex]current bounding box.center)}]
	\draw[<-] (0,0.3) -- (0,0) arc (180:360:.3) -- (0.6,0.3);
	\greensquare{(0.3,-0.3)} node[anchor=north,color=black] {\squarelabel{(r,b)}};
	\bluedot{(0.6,0)} node[anchor=west,color=black] {$e_i$};
	\bluedot{(0,0)} node[anchor=east,color=black] {$e_i$};
	\end{tikzpicture}, \quad k>0, 
	\ , \qquad
	\begin{tikzpicture}[>=To,baseline={([yshift=-2ex]current bounding box.center)}]
	\draw[<-] (0,-0.2) -- (0,0) arc (180:0:.3) -- (0.6,-0.2) node[anchor=north] {$i$};
	\greensquare{(0.3,0.3)} node[anchor=south,color=black] {\squarelabel{(r,b)}};
	\end{tikzpicture}
	:=\begin{tikzpicture}[>=To,baseline={([yshift=-2ex]current bounding box.center)}]
	\draw[<-] (0,-0.3) -- (0,0) arc (180:0:.3) -- (0.6,-0.3);
	\greensquare{(0.3,0.3)} node[anchor=south,color=black] {\squarelabel{(r,b)}};
	\bluedot{(0.6,0)} node[anchor=west,color=black] {$e_i$};
	\bluedot{(0,0)} node[anchor=east,color=black] {$e_i$};
	\end{tikzpicture}, \quad k<0,
	\ , \qquad 
	\begin{tikzpicture}[anchorbase]
	\draw [<-](0,0)  -- (0.6,0.6) node[anchor=south] {$i$};
	\draw [->](0.6,0) node[anchor=north] {$j$} -- (0,0.6);
	\end{tikzpicture}
	:=\begin{tikzpicture}[anchorbase]
	\draw [<-](0,0)  -- (0.7,0.7);
	\draw [->](0.7,0) -- (0,0.7);
	\bluedot{(0.55,0.55)} node[anchor=south,color=black] {$e_i$}; 
	\bluedot{(0.55,0.15)} node[anchor=north,color=black] {$e_j$}; 
	\end{tikzpicture}.
	\]
	This motivates one alternate presentation of $\Heis_{F,k}$, which we state below and prove in \cref{theo:isomorphic}.
	\begin{defin} \label{def:H2}
		The category $\Heis_{F,k}'$ is the strict $\kk$-linear monoidal category defined as follows. The objects are generated by $Q_{+}$ and $Q_{-}$ and we use juxaposition to denote tensor product. We will denote the unit object by $\mathbb{1}$. The morphisms of $\Heis_{F,k}'$ are generated by
		\[s_{i,j}=
		\begin{tikzpicture}[anchorbase]
		\draw [->](0,0) node[anchor=north] {$i$} -- (0.6,0.6);
		\draw [->](0.6,0) node[anchor=north] {$j$} -- (0,0.6);
		\end{tikzpicture}
		\colon Q_{+} Q_{+} \to Q_{+} Q_{+}
		\ , \quad
		c_i=
		\begin{tikzpicture}[anchorbase]
		\draw[->] (0,.2) node[anchor=south] {$i$} -- (0,0) arc (180:360:.3) -- (.6,.2);
		\end{tikzpicture}
		\colon \mathbb{1} \to Q_{-} Q_{+}
		\ ,\quad
		d_i=
		\begin{tikzpicture}[anchorbase]
		\draw[->] (0,-.2) node[anchor=north] {$i$} -- (0,0) arc (180:0:.3) -- (.6,-.2);
		\end{tikzpicture}
		\colon Q_{+} Q_{-} \to \mathbb{1},
		\]
		\[
		x_i=
		\begin{tikzpicture}[anchorbase]
		\draw[->] (0,0)  node[anchor=north] {$i$} -- (0,0.6);
		\redcircle{(0,0.3)};
		\end{tikzpicture}
		\colon Q_{+} \to Q_{+}	
		\ , \quad
		\beta_{(f,i)}=
		\begin{tikzpicture}[anchorbase]
		\draw[->] (0,0)  node[anchor=north] {$i$} -- (0,0.6);
		\bluedot{(0,0.3)} node[anchor=west,color=black] {$f$};
		\end{tikzpicture}
		\colon Q_{+} \to Q_{+}, \quad f\in F_i, \quad i,j=1,\dotsc,n.
		\]
		
		For $n \ge 1$, we denote the $n$-th power $x_i^n$ of $x_i$, $i=1,\dotsc,n,$ by labelling the dot with the exponent $n$:
		
		\[
		x_i^n
		=\begin{tikzpicture}[anchorbase]
		\draw[->] (0,0) node[anchor=north] {$i$} -- (0,1);
		\redcircle{(0,0.5)} node[anchor=west, color=black] {$n$};
		\end{tikzpicture}	
		\ .
		\]
		We also define, for $i,j=1,\dotsc,n$,
		\begin{equation} \label{eq:t-def2}
		t_{i,j} \colon \sQ_+ \sQ_- \to \sQ_- \sQ_+,\quad
		t_{i,j}=
		\begin{tikzpicture}[anchorbase]
		\draw [->](0,0) node[anchor=north] {$i$} -- (0.6,0.6);
		\draw [<-](0.6,0) -- (0,0.6) node[anchor=south] {$j$};
		\end{tikzpicture} 
		\ :=\
		\begin{tikzpicture}[anchorbase,scale=0.6]
		\draw[->] (0.3,0) node[anchor=north] {$i$} -- (-0.3,1);
		\draw[->] (-0.75,1)  node[anchor=south] {$j$} -- (-0.75,0.5) .. controls (-0.75,0.2) and (-0.5,0) .. (0,0.5) .. controls (0.5,1) and (0.75,0.8) .. (0.75,0.5) -- (0.75,0);
		\end{tikzpicture}.
		\end{equation}
		We impose three sets of relations:
		\begin{enumerate}[wide]
			\item \emph{Colored affine wreath product algebra relations}:  We have a homomorphisms of algebras for $i=1,\dotsc,n$,
			\begin{equation} \label{rel:token-homom2}
			F_i \to \End \sQ_{+},\quad f \mapsto \beta_{(f,i)},
			\end{equation}
			so that, in particular,
			\begin{equation} \label{rel:token-colide-up2}
			\begin{tikzpicture}[anchorbase]
			\draw[->] (0,0) node[anchor=north] {$i$} -- (0,1);
			\bluedot{(0,0.35)} node[anchor=east,color=black] {$g$};
			\bluedot{(0,0.7)} node[anchor=east,color=black] {$f$};
			\end{tikzpicture}
			\ = \
			\begin{tikzpicture}[anchorbase]
			\draw[->] (0,0) node[anchor=north] {$i$}  -- (0,1);
			\bluedot{(0,0.5)} node[anchor=west,color=black] {$fg$};
			\end{tikzpicture}
			,\quad f,g \in F_i.
			\end{equation}
			Note also that, for $a\in\kk,$ $f_j\in F_i,$ $j=1,\dotsc,m,$ $m>0,$ we have
			
			\noindent 	\begin{minipage}{0.5\linewidth}
			\begin{equation} \label{rel:token-sum-up2}
			\begin{tikzpicture}[anchorbase]
			\draw[->] (0,0) node[anchor=north] {$i$} -- (0,1);
			\bluedot{(0,0.5)} node[anchor=east,color=black] {$\left(\sum_{j=1}^{m}f_j\right)$};
			\end{tikzpicture}
			\ = \
			\sum_{j=1}^{m}\left(
			\begin{tikzpicture}[anchorbase]
			\draw[->] (0,0) node[anchor=north] {$i$} -- (0,1);
			\bluedot{(0,0.5)} node[anchor=west,color=black] {$f_j$};
			\end{tikzpicture}\right)
			\ ,
			\end{equation}	
			\end{minipage}
			\begin{minipage}{0.5\linewidth}
			\begin{equation} \label{rel:token-scale-up2}
			\begin{tikzpicture}[anchorbase]
			\draw[->] (0,0) node[anchor=north] {$i$} -- (0,1);
			\bluedot{(0,0.5)} node[anchor=east,color=black] {$\left(af_1\right)$};
			\end{tikzpicture}
			\ = \
			a\left(
			\begin{tikzpicture}[anchorbase]
			\draw[->] (0,0) node[anchor=north] {$i$} -- (0,1);
			\bluedot{(0,0.5)} node[anchor=west,color=black] {$f_1$};
			\end{tikzpicture}\right)
			\ .
			\end{equation}		
			\end{minipage}
		
			\indent Furthermore, we impose the following relations for all $i,j,l=1,\dotsc,n \colon$
			
			\noindent\begin{minipage}{0.33\linewidth}
				\begin{equation} \label{rel:braid-up2}
				\begin{tikzpicture}[anchorbase]
				\draw[->] (0,0) node[anchor=north] {$i$} -- (1,1);
				\draw[->] (1,0) node[anchor=north] {$l$} -- (0,1);
				\draw[->] (0.5,0) node[anchor=north] {$j$} .. controls (0,0.5) .. (0.5,1);
				\end{tikzpicture}
				\ =\
				\begin{tikzpicture}[anchorbase]
				\draw[->] (0,0) node[anchor=north] {$i$} -- (1,1);
				\draw[->] (1,0) node[anchor=north] {$l$} -- (0,1);
				\draw[->] (0.5,0) node[anchor=north] {$j$} .. controls (1,0.5) .. (0.5,1);
				\end{tikzpicture}\ ,
				\end{equation}
			\end{minipage}%
			\begin{minipage}{0.33\linewidth}
				\begin{equation} \label{rel:doublecross-up2}
				\begin{tikzpicture}[anchorbase]
				\draw[->] (0,0) node[anchor=north] {$i$} .. controls (0.5,0.5) .. (0,1);
				\draw[->] (0.5,0) node[anchor=north] {$j$} .. controls (0,0.5) .. (0.5,1);
				\end{tikzpicture}
				\ =\
				\begin{tikzpicture}[anchorbase]
				\draw[->] (0,0) node[anchor=north] {$i$} --(0,1);
				\draw[->] (0.5,0) node[anchor=north] {$j$} -- (0.5,1);
				\end{tikzpicture}\ ,
				\end{equation}
			\end{minipage}	
			\begin{minipage}{0.33\linewidth}
				\begin{equation} \label{rel:dot-token-up-slide2}
				\begin{tikzpicture}[anchorbase]
				\draw[->] (0,0) node[anchor=north] {$i$} -- (0,1);
				\redcircle{(0,0.3)};
				\bluedot{(0,0.6)} node[anchor=east, color=black] {$f$};
				\end{tikzpicture}
				\ =\
				\begin{tikzpicture}[anchorbase]
				\draw[->] (0,0) node[anchor=north] {$i$} -- (0,1);
				\redcircle{(0,0.6)};
				\bluedot{(0,0.3)} node[anchor=west, color=black] {$f$};
				\end{tikzpicture}\ , \quad f\in F_i,
				\end{equation}
			\end{minipage}\par\vspace{\belowdisplayskip}
			\noindent\begin{minipage}{0.5\linewidth}
				\begin{equation} \label{rel:tokenslide-up-right2}
				\begin{tikzpicture}[anchorbase]
				\draw[->] (0,0) node[anchor=north] {$i$} -- (1,1);
				\draw[->] (1,0) node[anchor=north] {$j$} -- (0,1);
				\bluedot{(.25,.25)} node [anchor=south east, color=black] {$f$};
				\end{tikzpicture}
				\ =\
				\begin{tikzpicture}[anchorbase]
				\draw[->](0,0) node[anchor=north] {$i$} -- (1,1);
				\draw[->](1,0)  node[anchor=north] {$j$} -- (0,1);
				\bluedot{(0.75,.75)} node [anchor=north west, color=black] {$f$};
				\end{tikzpicture}\ , \quad f\in F_i,
				\end{equation}
			\end{minipage}%
			\begin{minipage}{0.5\linewidth}	
				\begin{equation} \label{rel:dotslide1.2}
				\begin{tikzpicture}[anchorbase]
				\draw[->] (0,0) node[anchor=north] {$i$} -- (1,1);
				\draw[->] (1,0) node[anchor=north] {$j$} -- (0,1);
				\redcircle{(0.25,.75)};				
				\end{tikzpicture}
				\ -\
				\begin{tikzpicture}[anchorbase]
				\draw[->] (0,0) node[anchor=north] {$i$} -- (1,1);
				\draw[->] (1,0) node[anchor=north] {$j$} -- (0,1);
				\redcircle{(.75,.25)};
				\end{tikzpicture}
				\ =\
				\delta_{i,j} \sum_{b \in B_i}
				\begin{tikzpicture}[anchorbase]
				\draw[->] (0,0) -- (0,1);
				\draw[->] (0.5,0) -- (0.5,1);
				\bluedot{(0,0.5)} node[anchor=east, color=black] {$\chk{b}$};
				\bluedot{(0.5,0.5)} node[anchor=west, color=black] {$b$};
				\end{tikzpicture}\ .
				\end{equation}
			\end{minipage}

			It follows that we also have the relations:

			\noindent\begin{minipage}{0.4\linewidth}
				\begin{equation} \label{rel:tokenslide-up-left2}
				\begin{tikzpicture}[anchorbase]
				\draw[->] (0,0) node[anchor=north] {$i$} -- (1,1);
				\draw[->] (1,0) node[anchor=north] {$j$}-- (0,1);
				\bluedot{(.75,.25)} node [anchor=south west, color=black] {$f$};
				\end{tikzpicture}
				\ =\
				\begin{tikzpicture}[anchorbase]
				\draw[->] (0,0) node[anchor=north] {$i$}-- (1,1);
				\draw[->] (1,0) node[anchor=north] {$j$} -- (0,1);
				\bluedot{(0.25,.75)} node [anchor=north east, color=black] {$f$};
				\end{tikzpicture}\ ,  \quad f\in F_i,
				\end{equation}
			\end{minipage}%
			\begin{minipage}{0.6\linewidth}
				\begin{equation} \label{rel:dotslide2}
				\begin{tikzpicture}[anchorbase]
				\draw[->] (0,0) node[anchor=north] {$i$} -- (1,1);
				\draw[->] (1,0) node[anchor=north] {$j$} -- (0,1);
				\redcircle{(0.25,.25)};
				\end{tikzpicture}
				\ -\
				\begin{tikzpicture}[anchorbase]
				\draw[->] (0,0) node[anchor=north] {$i$} -- (1,1);
				\draw[->] (1,0) node[anchor=north] {$j$} -- (0,1);
				\redcircle{(.75,.75)};
				\end{tikzpicture}
				\ =\
				\delta_{i,j}\sum_{b \in B_i}
				\begin{tikzpicture}[anchorbase]
				\draw[->] (0,0) node[anchor=north] {$i$}-- (0,1);
				\draw[->] (0.5,0)  node[anchor=north] {$i$} -- (0.5,1);
				\bluedot{(0,0.5)} node[anchor=east, color=black] {$b$};
				\bluedot{(0.5,0.5)} node[anchor=west, color=black] {$\chk{b}$};					
				\end{tikzpicture}\ .
				\end{equation}
			\end{minipage}
			
			\item \emph{Right adjunction relations}:  We impose the following relations for all $i=1,\dotsc,n$:
			
			\noindent\begin{minipage}{0.5\linewidth}
				\begin{equation} \label{rel:right-adjunction-up2}
				\begin{tikzpicture}[anchorbase]
				\draw[->] (0,0) node[anchor=north] {$i$} -- (0,0.6) arc(180:0:0.2) -- (0.4,0.4) arc(180:360:0.2) -- (0.8,1);
				\end{tikzpicture}
				\ =\
				\begin{tikzpicture}[anchorbase]
				\draw[->] (0,0) node[anchor=north] {$i$} -- (0,1);
				\end{tikzpicture}\ ,
				\end{equation}
			\end{minipage}%
			\begin{minipage}{0.5\linewidth}
				\begin{equation} \label{rel:right-adjunction-down2}
				\begin{tikzpicture}[anchorbase]
				\draw[->] (0,1) node[anchor=south] {$i$} -- (0,0.4) arc(180:360:0.2) -- (0.4,0.6) arc(180:0:0.2) -- (0.8,0);
				\end{tikzpicture}
				\ =\
				\begin{tikzpicture}[anchorbase]
				\draw[<-] (0,0) -- (0,1) node[anchor=south] {$i$};
				\end{tikzpicture}\ .
				\end{equation}
			\end{minipage}\par\vspace{\belowdisplayskip}
			
			\item \emph{Inversion relation}: 
			Suppose $i\neq j$, $i,j=1,\dotsc,n$. Then $t_{i,j}$ is invertible in $\Heis_{F,k}$.	
			Additionally, the following matrix of morphisms is an isomorphism in the additive envelope of $\Heis_{F,k}$ for $i=1,\dotsc,n$:
			\begin{gather} \label{eq:inversion-relation-pos-l2}
			\left[
			\begin{tikzpicture}[anchorbase]
			\draw [->](0,0) node[anchor=north] {$i$} -- (0.6,0.6);
			\draw [<-](0.6,0) -- (0,0.6) node[anchor=south] {$i$};
			\end{tikzpicture}
			\quad
			\begin{tikzpicture}[anchorbase]
			\draw[->] (0,0) node[anchor=north] {$i$} -- (0,0.7) arc (180:0:.3) -- (0.6,0);
			\redcircle{(0,0.6)} node[anchor=west,color=black] {$r$};
			\bluedot{(0,0.2)} node[anchor=west,color=black] {$\chk{b}$};
			\end{tikzpicture}\ ,\
			0 \le r \le k-1,\ b \in B_i
			\right]^T
			\ \colon \sQ_{+}\sQ_{-} \to \sQ_{-}\sQ_{+} \oplus \one ^{\oplus k \dim F_i} \quad \text{if } k \ge 0,
			\\ \label{eq:inversion-relation-neg-l2}
			\left[
			\begin{tikzpicture}[anchorbase]
			\draw [->](0,0)  node[anchor=north] {$i$} -- (0.6,0.6);
			\draw [<-](0.6,0) -- (0,0.6)  node[anchor=south] {$i$};
			\end{tikzpicture}
			\quad
			\begin{tikzpicture}[anchorbase]
			\draw[->] (0,1)  node[anchor=south] {$i$} -- (0,0.3) arc (180:360:.3) -- (0.6,1);
			\redcircle{(0.6,0.7)} node[anchor=east,color=black] {$r$};
			\bluedot{(0.6,0.3)} node[anchor=east,color=black] {$\chk{b}$};
			\end{tikzpicture}\ ,\
			0 \le r \le -k-1,\ b \in B_i
			\right]
			\ \colon \sQ_{+} \sQ_{-} \oplus \one^{\oplus (-k \dim F_i)} \to \sQ_{-} \sQ_{+} \quad \text{if } k < 0.
			\end{gather}
			(The matrices \cref{eq:inversion-relation-pos-l2} are of size $(1 + k \dim F_i) \times 1$, while the matrices \cref{eq:inversion-relation-neg-l2} are of size $1 \times (1 + k \dim F_i)$.)  Note that these conditions are independent of the choice of basis $B_i$ of $F_i$.
		\end{enumerate}
	\end{defin}
	
	For the sake of having notation, we define
	\begin{equation}\label{eq:left-crossing2}
	t_{i,j}' =
	\begin{tikzpicture}[anchorbase]
	\draw [<-](0,0) -- (0.6,0.6) node[anchor=south] {$i$};
	\draw [->](0.6,0) node[anchor=north] {$j$} -- (0,0.6);
	\end{tikzpicture}
	\ \colon \sQ_- \sQ_+ \to \sQ_+ \sQ_-
	\ ,
	\end{equation}
	and
	\begin{equation} \label{eq:decorated-left-cap-cup2}
	\begin{tikzpicture}[>=To,baseline={([yshift=1ex]current bounding box.center)}]
	\draw[<-] (0,0.2) -- (0,0) arc (180:360:.3) -- (0.6,0.2) node[anchor=south] {$i$};
	\greensquare{(0.3,-0.3)} node[anchor=north,color=black] {\squarelabel{(r,b)}};
	\end{tikzpicture}
	\ \colon \one \to \sQ_+ \sQ_-
	\ ,\qquad
	\begin{tikzpicture}[>=To,baseline={([yshift=-2ex]current bounding box.center)}]
	\draw[<-] (0,-0.2) -- (0,0) arc (180:0:.3) -- (0.6,-0.2) node[anchor=north] {$i$};
	\greensquare{(0.3,0.3)} node[anchor=south,color=black] {\squarelabel{(r,b)}};
	\end{tikzpicture}
	\ \colon \sQ_+ \sQ_- \to \one, \quad b\in B_i,
	\end{equation} 
	for all $i,j=1,\dotsc,n$, and $0 \le r < k$ or $0 \le r < -k,$ respectively, by declaring that
	\begin{equation} \label{eq:inversion-leftcup-def2}
	\left[
	\begin{tikzpicture}[anchorbase]
	\draw [<-](0,0) -- (0.6,0.6) node[anchor=south] {$i$};
	\draw [->](0.6,0) node[anchor=north] {$i$} -- (0,0.6);
	\end{tikzpicture}
	\quad
	\begin{tikzpicture}[>=stealth,baseline={([yshift=1ex]current bounding box.center)}]
	\draw[<-] (0,0.2) -- (0,0) arc (180:360:.3) -- (0.6,0.2) node[anchor=south] {$i$};
	\greensquare{(0.3,-0.3)} node[anchor=north,color=black] {\squarelabel{(r,b)}};
	\end{tikzpicture},\
	0 \le r \le k-1,\ b \in B_i
	\right]
	=
	\left(\left[
	\begin{tikzpicture}[anchorbase]
	\draw [->](0,0) node[anchor=north] {$i$} -- (0.6,0.6);
	\draw [<-](0.6,0) -- (0,0.6) node[anchor=south] {$i$};
	\end{tikzpicture}
	\quad
	\begin{tikzpicture}[anchorbase]
	\draw[->] (0,0) node[anchor=north] {$i$} -- (0,0.7) arc (180:0:.3) -- (0.6,0);
	\redcircle{(0,0.6)} node[anchor=west,color=black] {$r$};
	\bluedot{(0,0.2)} node[anchor=west,color=black] {$\chk{b}$};
	\end{tikzpicture}\ ,\
	0 \le r \le k-1,\ b \in B_i
	\right]^T \right)^{-1}
	\quad \text{if } k \ge 0,
	\end{equation}
	or
	\begin{equation} \label{eq:inversion-leftcap-def2}
	\left[
	\begin{tikzpicture}[anchorbase]
	\draw [<-](0,0) -- (0.6,0.6) node[anchor=south] {$i$};
	\draw [->](0.6,0)  node[anchor=north] {$i$} -- (0,0.6);
	\end{tikzpicture}
	\quad
	\begin{tikzpicture}[>=stealth,baseline={([yshift=-2ex]current bounding box.center)}]
	\draw[<-] (0,-0.2) -- (0,0) arc (180:0:.3) -- (0.6,-0.2) node[anchor=north] {$i$};
	\greensquare{(0.3,0.3)} node[anchor=south,color=black] {\squarelabel{(r,b)}};
	\end{tikzpicture},\
	0 \le r \le -k-1,\ b \in B_i
	\right]^T
	=
	\left[
	\begin{tikzpicture}[anchorbase]
	\draw [->](0,0) node[anchor=north] {$i$} -- (0.6,0.6);
	\draw [<-](0.6,0) -- (0,0.6) node[anchor=south] {$i$};
	\end{tikzpicture}
	\quad
	\begin{tikzpicture}[anchorbase]
	\draw[->] (0,1) node[anchor=south] {$i$} -- (0,0.3) arc (180:360:.3) -- (0.6,1);
	\redcircle{(0.6,0.7)} node[anchor=east,color=black] {$r$};
	\bluedot{(0.6,0.3)} node[anchor=east,color=black] {$\chk{b}$};
	\end{tikzpicture}\ ,\
	0 \le r \le -k-1,\ b \in B_i
	\right]^{-1}
	\quad \text{if } k < 0,
	\end{equation}
	when $i=1,\dotsc,n$, and
	\begin{equation}\label{eq:right-crossing}
	\begin{tikzpicture}[anchorbase]
	\draw [->](0,0) node[anchor=north] {$i$} -- (0.6,0.6);
	\draw [<-](0.6,0) -- (0,0.6) node[anchor=south] {$j$};
	\end{tikzpicture}=
	\left(\begin{tikzpicture}[anchorbase]
	\draw [<-](0,0)  -- (0.6,0.6) node[anchor=south] {$i$};
	\draw [->](0.6,0) node[anchor=north] {$j$} -- (0,0.6) ;
	\end{tikzpicture}\right)^{-1},
	\end{equation}
	when $i\neq j,$ $i,j=1,\dotsc,n$.
	As in \cite[page 11]{S17}, we extend the definition of the decorated left cups and caps by linearity in the second argument of the label. 
	
	The relations \cref{rel:up-down-doublecross2,rel:down-up-doublecross2} hold in $\Heis_{F,k}'$ for all $k.$ For $i,j=1,\dotsc,n,$
	
	\noindent \begin{minipage}{0.5\linewidth}
		\begin{equation} \label{rel:up-down-doublecross2}
		\begin{tikzpicture}[anchorbase]
		\draw[->] (0,0) node[anchor=north] {$i$} -- (0,1);
		\draw[<-] (0.5,0) -- (0.5,1) node[anchor=south] {$j$};
		\end{tikzpicture}
		\ =\
		\begin{tikzpicture}[anchorbase]
		\draw[->] (0,0) node[anchor=north] {$i$} .. controls (0.5,0.5) .. (0,1);
		\draw[<-] (0.5,0) .. controls (0,0.5) .. (0.5,1) node[anchor=south] {$j$};
		\end{tikzpicture}
		\ + \delta_{i,j}\sum_{r=0}^{k-1} \sum_{b \in B_i}\
		\begin{tikzpicture}[anchorbase]
		\draw[->] (0,0) node[anchor=north] {$i$} -- (0,0.7) arc (180:0:0.3) -- (0.6,0);
		\draw[<-] (0,2) -- (0,1.8) arc (180:360:0.3) -- (0.6,2) node[anchor=south] {$i$};
		\redcircle{(0,0.6)} node[anchor=west,color=black] {$r$};
		\bluedot{(0,0.2)} node[anchor=west,color=black] {$\chk{b}$};
		\greensquare{(0.3,1.5)} node[anchor=north west,color=black] {\squarelabel{(r,b)}};
		\end{tikzpicture}
		\ ,
		\end{equation}
	\end{minipage}%
	\noindent\begin{minipage}{0.5\linewidth}
		\begin{equation} \label{rel:down-up-doublecross2}
		\begin{tikzpicture}[anchorbase]
		\draw[<-] (0,0) -- (0,1) node[anchor=south] {$i$};
		\draw[->] (0.5,0) node[anchor=north] {$j$} -- (0.5,1);
		\end{tikzpicture}
		\ =\
		\begin{tikzpicture}[anchorbase]
		\draw[<-] (0,0) .. controls (0.5,0.5) .. (0,1) node[anchor=south] {$i$};
		\draw[->] (0.5,0) node[anchor=north] {$j$}.. controls (0,0.5) .. (0.5,1);
		\end{tikzpicture}
		\ + \delta_{i,j}\sum_{r=0}^{-k-1} \sum_{b \in B_i}\
		\begin{tikzpicture}[anchorbase]
		\draw[->] (0,2) node[anchor=south] {$i$} -- (0,1.3) arc (180:360:0.3) -- (0.6,2);
		\draw[<-] (0,0) -- (0,0.2) arc (180:0:0.3) -- (0.6,0) node[anchor=north] {$i$};
		\redcircle{(0.6,1.7)} node[anchor=east,color=black] {$r$};
		\bluedot{(0.6,1.3)} node[anchor=east,color=black] {$\chk{b}$};
		\greensquare{(0.3,0.5)} node[anchor=west,color=black] {\squarelabel{(r,b)}};
		\end{tikzpicture}
		\ .
		\end{equation}
	\end{minipage}\par\vspace{\belowdisplayskip}
	
	\noindent When $k > 0$, the following relations, \cref{rel:inversion1-xi-greater2,rel:inversion2-xi-greater2,rel:inversion3-xi-greater2}, hold in $\Heis_{F,k}'$. For all $0 \le r,s \le k-1$, $i,j=1,\dotsc,n$, and $b,c\in B_i$,
	
	\noindent\begin{minipage}{0.33\linewidth}
		\begin{equation} \label{rel:inversion1-xi-greater2}
		\begin{tikzpicture}[anchorbase]
		\draw[<-] (0,0) -- (0,1) node[anchor=south] {$i$};
		\draw[->] (0.5,0)  node[anchor=north] {$j$} -- (0.5,1);
		\end{tikzpicture}
		\ =\
		\begin{tikzpicture}[anchorbase]
		\draw[<-] (0,0) .. controls (0.5,0.5) .. (0,1)  node[anchor=south] {$i$};
		\draw[->] (0.5,0)  node[anchor=north] {$j$} .. controls (0,0.5) .. (0.5,1);
		\end{tikzpicture},
		\end{equation}
	\end{minipage}
	\begin{minipage}{0.33\linewidth}
		\begin{equation} \label{rel:inversion2-xi-greater2}
		\begin{tikzpicture}[anchorbase]
		\draw[->] (0,0)  node[anchor=south] {$i$} .. controls (0.3,-0.3) and (0.6,-0.3) .. (0.6,-0.6) arc(360:180:0.3) .. controls (0,-0.3) and (0.3,-0.3) .. (0.6,0);
		\greensquare{(0.3,-0.9)} node[anchor=north,color=black] {$(r,b)$};
		\end{tikzpicture}
		\ =\
		\begin{tikzpicture}[anchorbase]
		\draw[->] (0.6,-0.5)  node[anchor=north] {$i$} .. controls (0.1,0) and (0,-0.1) .. (0,0.3) -- (0,0.7) arc (180:0:.3) -- (0.6,0.3) .. controls (0.6,-0.1) and (0.5,0) .. (0,-0.5);
		\redcircle{(0,0.6)} node[anchor=west,color=black] {$r$};
		\bluedot{(0,0.3)} node[anchor=west,color=black] {$\chk{b}$};
		\end{tikzpicture}
		\ =0,
		\end{equation}
	\end{minipage}%
	\begin{minipage}{0.33\linewidth}
		\begin{equation} \label{rel:inversion3-xi-greater2}
		\begin{tikzpicture}[anchorbase]
		\draw[<-] (0,0.3) arc(90:450:0.3);
		\draw{(0,0.3)} node[anchor=south] {$i$};
		\redcircle{(-0.3,0)} node[anchor=east,color=black] {$r$};
		\bluedot{(0.3,0)} node[anchor=west,color=black] {$\chk{b}$};
		\greensquare{(0,-0.3)} node[anchor=north,color=black] {$(s,c)$};
		\end{tikzpicture}
		= \delta_{r,s} \delta_{b,c}.
		\end{equation}
	\end{minipage}\par\vspace{\belowdisplayskip}
	
	\noindent When $k < 0$, the following relations, \cref{rel:inversion1-xi-smaller2,rel:inversion2-xi-smaller2,rel:inversion3-xi-smaller2}, hold in $\Heis_{F,k}'$. For all $0 \le r,s \le -k-1$, $i=1,\dotsc,n$, and $b,c\in B_i$,
	
	\noindent\begin{minipage}{0.33\linewidth}
		\begin{equation} \label{rel:inversion1-xi-smaller2}
		\begin{tikzpicture}[anchorbase]
		\draw[->] (0,0)  node[anchor=north] {$i$}-- (0,1);
		\draw[<-] (0.5,0) -- (0.5,1)  node[anchor=south] {$j$};
		\end{tikzpicture}
		\ =\
		\begin{tikzpicture}[anchorbase]
		\draw[->] (0,0)  node[anchor=north] {$i$} .. controls (0.5,0.5) .. (0,1);
		\draw[<-] (0.5,0) .. controls (0,0.5) .. (0.5,1)  node[anchor=south] {$j$};
		\end{tikzpicture},
		\end{equation}
	\end{minipage}
	\begin{minipage}{0.33\linewidth}
		\begin{equation} \label{rel:inversion2-xi-smaller2}
		\begin{tikzpicture}[anchorbase]
		\draw[->] (0,0)  node[anchor=north] {$i$} .. controls (0.3,0.3) and (0.6,0.3) .. (0.6,0.6) arc(0:180:0.3) .. controls (0,0.3) and (0.3,0.3) .. (0.6,0);
		\greensquare{(0.3,0.9)} node[anchor=south,color=black] {$(r,b)$};
		\end{tikzpicture}
		\ =\
		\begin{tikzpicture}[anchorbase]
		\draw[->] (0.6,0.5)  node[anchor=south] {$i$} .. controls (0.1,0) and (0,0.1) .. (0,-0.3) -- (0,-0.7) arc (180:360:.3) -- (0.6,-0.3) .. controls (0.6,0.1) and (0.5,0) .. (0,0.5);
		\redcircle{(0.6,-0.3)} node[anchor=east,color=black] {$r$};
		\bluedot{(0.6,-0.6)} node[anchor=west,color=black] {$\chk{b}$};
		\end{tikzpicture}
		\  = 0,
		\end{equation}
	\end{minipage}%
	\begin{minipage}{0.33\linewidth}
		\begin{equation} \label{rel:inversion3-xi-smaller2}
		\begin{tikzpicture}[anchorbase]
		\draw[->] (0,-0.3) arc(270:630:0.3);
		\draw{(0,-0.3)} node[anchor=north] {$i$};
		\redcircle{(0.3,0)} node[anchor=west,color=black] {$r$};
		\bluedot{(-0.3,0)} node[anchor=east,color=black] {$\chk{b}$};
		\greensquare{(0,0.3)} node[anchor=south,color=black] {$(s,c)$};
		\end{tikzpicture}
		= \delta_{r,s} \delta_{b,c}.
		\end{equation}
	\end{minipage}\par\vspace{\belowdisplayskip}
	\noindent 	The inversion relation in $\Heis_{F,k}'$ is equivalent to the relations \cref{rel:up-down-doublecross2,rel:inversion1-xi-greater2,rel:inversion2-xi-greater2,rel:inversion3-xi-greater2} when $k>0$, the relations \cref{rel:down-up-doublecross2,rel:inversion1-xi-smaller2,rel:inversion2-xi-smaller2,rel:inversion3-xi-smaller2} when $k<0,$ and the relations \cref{rel:down-up-doublecross2,rel:up-down-doublecross2} when $k=0.$ Note that when $k=0$, the relations \cref{rel:down-up-doublecross2,rel:up-down-doublecross2} imply that, for $i,j=1,\dotsc,n,$
	\[
	t'_{i,j}=
	\begin{tikzpicture} [anchorbase]
	\draw[<-] (0,0) -- (1,1) node[anchor=south] {$i$} ;
	\draw[->] (1,0) node[anchor=north] {$j$} -- (0,1);
	\end{tikzpicture}
	\colon \sQ_-\sQ_+\to \sQ_+\sQ_-
	\ ,
	\] 
	
	\noindent is a generating morphism that is inverse to $t_{i,j}$, so that
	
	\[
	\begin{tikzpicture}[anchorbase]
	\draw[->] (0,0) node[anchor=north] {$i$} .. controls (0.5,0.5) .. (0,1);
	\draw[<-] (0.5,0) .. controls (0,0.5) .. (0.5,1) node[anchor=south] {$j$};
	\end{tikzpicture}
	\ =\
	\begin{tikzpicture}[anchorbase]
	\draw[->] (0,0) node[anchor=north] {$i$} -- (0,1);
	\draw[<-] (0.5,0) -- (0.5,1) node[anchor=south] {$j$};
	\end{tikzpicture}
	,\qquad
	\begin{tikzpicture}[anchorbase]
	\draw[<-] (0,0) .. controls (0.5,0.5) .. (0,1) node[anchor=south] {i};
	\draw[->] (0.5,0) node[anchor=north] {$j$} .. controls (0,0.5) .. (0.5,1);
	\end{tikzpicture}
	\ =\
	\begin{tikzpicture}[anchorbase]
	\draw[<-] (0,0) -- (0,1) node[anchor=south] {$i$};
	\draw[->] (0.5,0) node[anchor=north] {$j$} -- (0.5,1);
	\end{tikzpicture}
	\ ,
	\]
	\noindent for all $i,j=1,\dotsc,n.$
	\begin{theo}\label{theo:isomorphic}
		The categories $\Heis_{F,k}$ and $\Heis_{F,k}'$ are isomorphic.
	\end{theo}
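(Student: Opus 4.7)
The plan is to construct mutually inverse strict $\kk$-linear monoidal functors
\[
\Phi \colon \Heis_{F,k}' \to \Heis_{F,k}, \qquad \Psi \colon \Heis_{F,k} \to \Heis_{F,k}',
\]
each acting as the identity on objects. I would define $\Phi$ on generators by the pictorial identifications laid out in the paragraph preceding \cref{def:H2}: each colored generator is sent to the corresponding uncolored morphism with the appropriate idempotent $e_i$-tokens attached to its strands, and $\beta_{(f,i)} \mapsto \beta_f$ for $f \in F_i$. For $\Psi$, writing each $f \in F$ uniquely as $f = f_1 + \dotsb + f_n$ with $f_i := f e_i \in F_i$, I would send each generator to a sum over colors:
\[
\beta_f \mapsto \sum_{i=1}^n \beta_{(f_i, i)}, \quad x \mapsto \sum_i x_i, \quad s \mapsto \sum_{i,j} s_{i,j}, \quad c \mapsto \sum_i c_i, \quad d \mapsto \sum_i d_i.
\]

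Verifying $\Phi$ is well-defined reduces to checking that each defining relation of $\Heis_{F,k}'$ holds in $\Heis_{F,k}$ after substitution. The colored affine wreath product relations follow from the uncolored ones in \cref{def:H1} via the identities $e_i e_j = \delta_{i,j} e_i$, $\chk{b} e_i = \chk{b} = e_i \chk{b}$ for $b \in B_i$, and $e_j b = 0 = \chk{b} e_j$ for $b \in B_l$ with $l \neq j$; the Kronecker delta $\delta_{i,j}$ in \cref{rel:dotslide1.2,rel:dotslide2} arises precisely from passing the idempotent tokens across the sum $\sum_{b\in B}\chk{b} \otimes b$ in \cref{rel:dotslide1.1,rel:dotslide2.1}, since only terms with $b \in B_i \cap B_j$ survive. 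The right adjunction relations are immediate from \cref{rel:right-adjunction-up1,rel:right-adjunction-down1}. Dually, $\Psi$ is well-defined because, after writing $1_F = e_1 + \dotsb + e_n$ and using bilinearity, each relation of $\Heis_{F,k}$ becomes a sum of colored relations in $\Heis_{F,k}'$, with the cross-color contributions vanishing by \cref{eq:dual-basis-def}.

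The main obstacle is the inversion relation. For $\Phi$ this splits into two checks: (i) $\Phi(t_{i,j})$ is invertible for $i \neq j$, and (ii) the colored matrix \cref{eq:inversion-relation-pos-l2} (resp.\ \cref{eq:inversion-relation-neg-l2}) becomes an isomorphism in the additive envelope of $\Heis_{F,k}$ for each $i$. For (i), attaching $e_i, e_j$ tokens to \cref{rel:up-down-doublecross1,rel:down-up-doublecross1} and applying \cref{lem:colored-decorated-cups-caps} forces every term in the remaining sum to carry a factor $\delta_{i,j}$, so the relations collapse to the colored doublecross relations, yielding a two-sided inverse of $\Phi(t_{i,j})$ given by $\Phi(t_{j,i}')$. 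For (ii), the uncolored inversion relation of \cref{def:H1} restricts to its $(e_i, e_i)$-decorated component, where only the left cups and caps with $b \in B_i$ contribute, again by \cref{lem:colored-decorated-cups-caps}. A symmetric analysis establishes well-definedness of $\Psi$ on the uncolored inversion relation, with the uncolored inverse matrix assembled block-by-block from the colored inversion data in $\Heis_{F,k}'$. Finally, $\Phi \circ \Psi$ and $\Psi \circ \Phi$ equal the identity on generators; for example, $\Phi \Psi(\beta_f) = \sum_i \beta_{f_i} = \beta_f$ by \cref{rel:token-sum-up1}, while $\Psi \Phi(\beta_{(f,i)}) = \Psi(\beta_f) = \sum_j \beta_{(f e_j, j)} = \beta_{(f,i)}$ because $f e_j = \delta_{i,j} f$ for $f \in F_i$.
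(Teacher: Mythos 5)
Your proposal is correct and follows essentially the same route as the paper: it constructs the same pair of mutually inverse strict monoidal functors (colored generators mapped to idempotent-decorated uncolored morphisms, and uncolored generators mapped to sums over colors), verifies well-definedness by attaching idempotents to the uncolored relations in one direction and summing the colored relations in the other, and handles the inversion relation via the equivalent explicit relations together with \cref{lem:colored-decorated-cups-caps}. The only difference is that you spell out a few of the token-algebra identities (such as $e_ie_j=\delta_{i,j}e_i$ and $fe_j=\delta_{i,j}f$ for $f\in F_i$) that the paper leaves implicit.
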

	\begin{proof}
		We define a monoidal functor $F:\Heis_{F,k}'\to\Heis_{F,k}$ as follows. On objects define 
		\[
		F\left(\sQ_\pm\right)=\sQ_\pm,
		\]
		and on morphisms, for $f_i\in F_i$, $i,j=1,\dotsc,n$, define
		\[
		F\left(s_{i,j}\right)=s_{i,j}, \quad F\left(c_i\right)=c_i, \quad F\left(d_i\right)=d_i, \quad F\left(x_i\right)=x_i, \quad F\left(\beta_{(f_i,i)}\right)=\beta_{f_i},  \quad F\left(t_{i,j}\right)=t_{i,j},\]
		\[ F\left(
		\begin{tikzpicture}[>=To,baseline={([yshift=-2ex]current bounding box.center)}]
		\draw[<-] (0,-0.2) -- (0,0) arc (180:0:.3) -- (0.6,-0.2) node[anchor=north] {$i$};
		\greensquare{(0.3,0.3)} node[anchor=south,color=black] {\squarelabel{(r,f_i)}};
		\end{tikzpicture}\right)=
		\begin{tikzpicture}[>=To,baseline={([yshift=-2ex]current bounding box.center)}]
		\draw[<-] (0,-0.2) -- (0,0) arc (180:0:.3) -- (0.6,-0.2) node[anchor=north] {$i$};
		\greensquare{(0.3,0.3)} node[anchor=south,color=black] {\squarelabel{(r,f_i)}};
		\end{tikzpicture}, \quad  F\left(\begin{tikzpicture}[>=To,baseline={([yshift=1ex]current bounding box.center)}]
		\draw[<-] (0,0.2) -- (0,0) arc (180:360:.3) -- (0.6,0.2) node[anchor=south] {$i$};
		\greensquare{(0.3,-0.3)} node[anchor=north,color=black] {\squarelabel{(r,f_i)}};
		\end{tikzpicture}\right)=\begin{tikzpicture}[>=To,baseline={([yshift=1ex]current bounding box.center)}]
		\draw[<-] (0,0.2) -- (0,0) arc (180:360:.3) -- (0.6,0.2) node[anchor=south] {$i$};
		\greensquare{(0.3,-0.3)} node[anchor=north,color=black] {\squarelabel{(r,f_i)}};
		\end{tikzpicture}, \quad F\left(t_{i,j}'\right)=t_{i,j}',\]
		and extend $F$ so that it is linear and monoidal.
		We define another monoidal functor $G:\Heis_{F,k}\to\Heis_{F,k}'$ as follows. On objects define 
		\[
		G\left(\sQ_\pm\right)=\sQ_\pm,
		\]
		and on morphisms, define
		\[
		G\left(s\right)=\sum_{i,j=1}^ns_{i,j}, \quad G\left(c\right)=\sum_{i=1}^nc_i,\quad G\left(d\right)=\sum_{i=1}^nd_i, \quad G\left(\beta_{(f_1,\dotsc,f_n)}\right)=\sum_{i=1}^n\beta_{(f_i,i)}, \quad G\left(t\right)=\sum_{i,j=1}^nt_{i,j},
		\]
		\[
		G\left(
		\begin{tikzpicture}[>=To,baseline={([yshift=-2ex]current bounding box.center)}]
		\draw[<-] (0,-0.2) -- (0,0) arc (180:0:.3) -- (0.6,-0.2);
		\greensquare{(0.3,0.3)} node[anchor=south,color=black] {\squarelabel{(r,(f_1,\dotsc,f_n))}};
		\end{tikzpicture}\right)=\sum_{i=1}^n
		\begin{tikzpicture}[>=To,baseline={([yshift=-2ex]current bounding box.center)}]
		\draw[<-] (0,-0.2) -- (0,0) arc (180:0:.3) -- (0.6,-0.2) node[anchor=north] {$i$};
		\greensquare{(0.3,0.3)} node[anchor=south,color=black] {\squarelabel{(r,f_i)}};
		\end{tikzpicture}, \quad  G\left(\begin{tikzpicture}[>=To,baseline={([yshift=1ex]current bounding box.center)}]
		\draw[<-] (0,0.2) -- (0,0) arc (180:360:.3) -- (0.6,0.2);
		\greensquare{(0.3,-0.3)} node[anchor=north,color=black] {\squarelabel{(r,(f_1,\dotsc,f_n))}};
		\end{tikzpicture}\right)=\sum_{i=1}^n\begin{tikzpicture}[>=To,baseline={([yshift=1ex]current bounding box.center)}]
		\draw[<-] (0,0.2) -- (0,0) arc (180:360:.3) -- (0.6,0.2) node[anchor=south] {$i$};
		\greensquare{(0.3,-0.3)} node[anchor=north,color=black] {\squarelabel{(r,f_i)}};
		\end{tikzpicture}, \quad G\left(t'\right)=\sum_{i,j=1}^nt_{i,j}',
		\]
		and extend $G$ so that it is linear and monoidal.
		We claim that $F$ and $G$ are strict $\kk$-linear monoidal functors that are two-sided inverses of each other. If $G$ and $F$ exist (i.e. if they are well-defined), it is clear that $G\circ F=id_{\Heis_{F,k}'}$ and $F\circ G=id_{\Heis_{F,k}}$.	
		So it is enough to show that $F$ and $G$ are well-defined. To do this, we show that $F$ preserves the defining relations of $\Heis_{F,k}'$ and $G$ preserves the defining relations of $\Heis_{F,k}$.
		That is, we show that $G$ preserves the relations
		\cref{rel:token-homom1,rel:braid-up1,rel:doublecross-up1,rel:dot-token-up-slide1,rel:tokenslide-up-right1,rel:dotslide1.1,rel:right-adjunction-up1,rel:right-adjunction-down1}, and the inversion relation in $\Heis_{F,k}$, and that $F$ preserves the relations  \cref{rel:token-homom2,rel:braid-up2,rel:doublecross-up2,rel:dot-token-up-slide2,rel:tokenslide-up-right2,rel:dotslide1.2,rel:right-adjunction-up2,rel:right-adjunction-down2}, and the inversion relation in $\Heis_{F,k}'$.
		We first show that $F$ preserves the defining relations of $\Heis_{F,k}'.$
		
		The functor $F$ preserves the relations \cref{rel:token-homom2,rel:braid-up2,rel:doublecross-up2,rel:dot-token-up-slide2,rel:tokenslide-up-right2,rel:dotslide1.2,rel:right-adjunction-up2,rel:right-adjunction-down2} in $\Heis_{F,k}'$. This is seen by attaching the appropriate idempotents above or below both sides of the relations \cref{rel:token-colide-up1,rel:token-sum-up1,rel:token-scale-up1,rel:braid-up1,rel:doublecross-up1,rel:dot-token-up-slide1,rel:tokenslide-up-right1,rel:dotslide1.1,rel:right-adjunction-up1,rel:right-adjunction-down1} in $\Heis_{F,k},$ and noting that the resulting equations are the images of \cref{rel:token-colide-up2,rel:token-sum-up2,rel:token-scale-up2,rel:braid-up2,rel:doublecross-up2,rel:dot-token-up-slide2,rel:tokenslide-up-right2,rel:dotslide1.2,rel:right-adjunction-up2,rel:right-adjunction-down2} under $F,$ respectively.
			
		The functor $F$ preserves the inversion relation in $\Heis_{F,k}'$.  Showing that $F$ preserves this relation in $\Heis_{F,k}'$ is equivalent to showing that $F$ preserves the relations \cref{rel:up-down-doublecross2,rel:inversion1-xi-greater2,rel:inversion2-xi-greater2,rel:inversion3-xi-greater2} when $k>0$, the relations \cref{rel:down-up-doublecross2,rel:inversion1-xi-smaller2,rel:inversion2-xi-smaller2,rel:inversion3-xi-smaller2} when $k<0,$ and the relations \cref{rel:down-up-doublecross2,rel:up-down-doublecross2} when $k=0.$  
		To show that $F$ preserves the relations \cref{rel:up-down-doublecross2,rel:down-up-doublecross2,rel:inversion1-xi-greater2,rel:inversion1-xi-smaller2} in $\Heis_{F,k}'$, we attach the appropriate idempotents above or below both sides of the relations \cref{rel:up-down-doublecross1,rel:down-up-doublecross1,rel:inversion1-xi-greater1,rel:inversion1-xi-smaller1} in $\Heis_{F,k}$, and note that these equations are the images of \cref{rel:up-down-doublecross2,rel:down-up-doublecross2,rel:inversion1-xi-greater2,rel:inversion1-xi-smaller2} under $F$, respectively. 
		Now, to show that $F$ preserves the relations \cref{rel:inversion2-xi-greater2,rel:inversion3-xi-greater2,rel:inversion2-xi-smaller2,rel:inversion3-xi-smaller2} in $\Heis_{F,k}'$, we note that the relations \cref{rel:inversion2-xi-greater1,rel:inversion3-xi-greater1,rel:inversion2-xi-smaller1,rel:inversion3-xi-smaller1} in $\Heis_{F,k}$ are the images of \cref{rel:inversion2-xi-greater2,rel:inversion3-xi-greater2,rel:inversion2-xi-smaller2,rel:inversion3-xi-smaller2} under $F$, respectively, since for all $b\in B,$ $b\in B_i$ for some $i=1,\dotsc,n$, and since \cref{lem:colored-decorated-cups-caps} holds in $\Heis_{F,k}$. 
		
		Therefore, $F$ preserves all defining relations in $\Heis_{F,k}'$. Next we show that $G$ preserves all defining relations in $\Heis_{F,k}$.
		
		The functor $G$ preserves the relation \cref{rel:braid-up1} in $\Heis_{F,k}$. This follows by summing both sides of the relation \cref{rel:braid-up2} over all $i,j,l=1,\dotsc,n$ in $\Heis_{F,k}',$ and noting that the resulting equation is the image of \cref{rel:braid-up1} under $G$.
		
		The functor $G$ preserves the relations \cref{rel:doublecross-up1,rel:tokenslide-up-right1,rel:dotslide1.1} in $\Heis_{F,k}$. This follows by summing both sides of the relations \cref{rel:doublecross-up2,rel:tokenslide-up-right2,rel:dotslide1.2}, over all $i,j=1,\dotsc,n$ in $\Heis_{F,k}'$, and noting that the resulting equations are the images of \cref{rel:doublecross-up1,rel:tokenslide-up-right1,rel:dotslide1.1} under $G$, respectively. 
		
		The functor $G$ preserves the relations \cref{rel:token-homom1,rel:dot-token-up-slide1,rel:right-adjunction-up1,rel:right-adjunction-down1} in $\Heis_{F,k}$. This follows by summing both sides of the relations \cref{rel:token-colide-up2,rel:token-sum-up2,rel:token-scale-up2,rel:dot-token-up-slide2,rel:right-adjunction-up2,rel:right-adjunction-down2} over all $i=1,\dotsc,n$ in $\Heis_{F,k}'$, and noting that the resulting equations are the images of \cref{rel:token-colide-up1,rel:token-sum-up1,rel:token-scale-up1,rel:dot-token-up-slide1,rel:right-adjunction-up1,rel:right-adjunction-down1} under $G$, respectively. 
		
		The functor $G$ preserves the inversion relation in $\Heis_{F,k}$. Showing that $G$ preserves this relation in $\Heis_{F,k}$ is equivalent to showing that $G$ preserves the relations \cref{rel:up-down-doublecross1,rel:inversion1-xi-greater1,rel:inversion2-xi-greater1,rel:inversion3-xi-greater1} when $k>0$, the relations \cref{rel:down-up-doublecross1,rel:inversion1-xi-smaller1,rel:inversion2-xi-smaller1,rel:inversion3-xi-smaller1} when $k<0,$ and the relations \cref{rel:down-up-doublecross1,rel:up-down-doublecross1} when $k=0.$
		To show that $G$ preserves the relations \cref{rel:up-down-doublecross1,rel:down-up-doublecross1,rel:inversion1-xi-greater1,rel:inversion1-xi-smaller1} in $\Heis_{F,k}$, we sum both sides of \cref{rel:up-down-doublecross2,rel:down-up-doublecross2,rel:inversion1-xi-greater2,rel:inversion1-xi-smaller2}, over all $i,j=1,\dotsc,n$ in $\Heis_{F,k}'$, and note that these equations are the images of \cref{rel:up-down-doublecross1,rel:down-up-doublecross1,rel:inversion1-xi-greater1,rel:inversion1-xi-smaller1} under $G$, respectively. Now, to show that $G$ preserves the relations \cref{rel:inversion2-xi-greater1,rel:inversion3-xi-greater1,rel:inversion2-xi-smaller1,rel:inversion3-xi-smaller1} in $\Heis_{F,k}$, we note that the relations \cref{rel:inversion2-xi-greater2,rel:inversion3-xi-greater2,rel:inversion2-xi-smaller2,rel:inversion3-xi-smaller2} in $\Heis_{F,k}'$ are the images of \cref{rel:inversion2-xi-greater1,rel:inversion3-xi-greater1,rel:inversion2-xi-smaller1,rel:inversion3-xi-smaller1} under $G$, respectively, since for all $b\in B$, $b\in B_i$ for some $i=1,\dotsc,n$, and since \cref{lem:colored-decorated-cups-caps} holds in $\Heis_{F,k}$. Therefore, $G$ preserves all defining relations in $\Heis_{F,k}$. This completes the proof.
	\end{proof}
	
	We recall the following alternate presentation of $\Heis_{F,k}$ introduced in \cite[Theorem 1.2]{S17}, and impose some additional assumptions. Namely, we assume the trace map of $F$ is symmetric, we ignore gradings, and we are in the non-super setting.
	\begin{theo} \label{theo:alternate-presentation1}
		There are unique morphisms $c' \colon \one \to \sQ_+ \sQ_-$ and $d' \colon \sQ_- \sQ_+ \to \one$ in $\Heis_{F,k}$, drawn as
		\[
		c' =
		\begin{tikzpicture}[anchorbase]
		\draw[<-] (0,.2) -- (0,0) arc (180:360:.3) -- (.6,.2);
		\end{tikzpicture}
		\ ,\qquad
		d' =
		\begin{tikzpicture}[anchorbase]
		\draw[<-] (0,-.2) -- (0,0) arc (180:0:.3) -- (.6,-.2);
		\end{tikzpicture}
		\ ,
		\]
		such that the following relations hold:
		\begin{equation} \label{theo-eq:doublecross-up-down1}
		\begin{tikzpicture}[anchorbase]
		\draw[->] (0,0) .. controls (0.5,0.5) .. (0,1);
		\draw[<-] (0.5,0) .. controls (0,0.5) .. (0.5,1);
		\end{tikzpicture}
		\ =\
		\begin{tikzpicture}[anchorbase]
		\draw[->] (0,0) -- (0,1);
		\draw[<-] (0.5,0) -- (0.5,1);
		\end{tikzpicture}
		\ + \sum_{r,s \ge 0} \sum_{a,b \in B}
		\begin{tikzpicture}[anchorbase]
		\draw[->] (0,0) -- (0,0.7) arc (180:0:0.3) -- (0.6,0);
		\draw[<-] (0,2.1) -- (0,1.8) arc (180:360:0.3) -- (0.6,2.1);
		\redcircle{(0,0.6)} node[anchor=west,color=black] {$r$};
		\bluedot{(0,0.2)} node[anchor=west,color=black] {$\chk{b}$};
		\redcircle{(0.6,1.8)} node[anchor=west,color=black] {$s$};
		\bluedot{(0,1.8)} node[anchor=east,color=black] {$a$};
		\draw[->] (1,1.55) arc(90:450:0.3);
		\bluedot{(1.3,1.25)} node[anchor=west,color=black] {$\chk{a} b$};
		\redcircle{(0.7,1.25)} node[anchor=east,color=black] {\dotlabel{-r-s-2}};
		\end{tikzpicture}
		\left( =
		\begin{tikzpicture}[anchorbase]
		\draw[->] (0,0) -- (0,1);
		\draw[<-] (0.5,0) -- (0.5,1);
		\end{tikzpicture}
		\ + \delta_{k,1} \sum_{b \in B}
		\begin{tikzpicture}[anchorbase]
		\draw[<-] (0,1.1) -- (0,0.9) arc(180:360:0.3) -- (0.6,1.1);
		\draw[->] (0,-0.1) -- (0,0.1) arc(180:0:0.3) -- (0.6,-0.1);
		\bluedot{(0,0.1)} node[anchor=west,color=black] {$\chk{b}$};
		\bluedot{(0,0.9)} node[anchor=west,color=black] {$b$};
		\end{tikzpicture}
		\text{ if } k \le 1 \right),
		\end{equation}
		\begin{equation} \label{theo-eq:doublecross-down-up1}
		\begin{tikzpicture}[anchorbase]
		\draw[<-] (0,0) .. controls (0.5,0.5) .. (0,1);
		\draw[->] (0.5,0) .. controls (0,0.5) .. (0.5,1);
		\end{tikzpicture}
		\ =\
		\begin{tikzpicture}[anchorbase]
		\draw[<-] (0,0) -- (0,1);
		\draw[->] (0.5,0) -- (0.5,1);
		\end{tikzpicture}
		\ + \sum_{r,s \ge 0} \sum_{a,b \in B} 
		\begin{tikzpicture}[anchorbase]
		\draw[->] (0,2) -- (0,1.3) arc (180:360:0.3) -- (0.6,2);
		\draw[<-] (0,-0.1) -- (0,0.2) arc (180:0:0.3) -- (0.6,-0.1);
		\redcircle{(0.6,1.7)} node[anchor=east,color=black] {$r$};
		\bluedot{(0.6,1.4)} node[anchor=west,color=black] {$\chk{b}$};
		\redcircle{(0.6,0.2)} node[anchor=west,color=black] {$s$};
		\bluedot{(0,0.2)} node[anchor=east,color=black] {$a$};
		\draw[->] (1,1.1) arc(90:-270:0.3);
		\bluedot{(1.3,0.8)} node[anchor=west,color=black] {$\chk{a}b$};
		\redcircle{(0.7,0.8)} node[anchor=east,color=black] {\dotlabel{-r-s-2}};
		\end{tikzpicture}
		\left( =
		\begin{tikzpicture}[anchorbase]
		\draw[<-] (0,0) -- (0,1);
		\draw[->] (0.5,0) -- (0.5,1);
		\end{tikzpicture}
		\ - \delta_{k,-1} \sum_{b \in B}
		\begin{tikzpicture}[anchorbase]
		\draw[->] (0,1.1) -- (0,0.9) arc(180:360:0.3) -- (0.6,1.1);
		\draw[<-] (0,-0.1) -- (0,0.1) arc(180:0:0.3) -- (0.6,-0.1);
		\bluedot{(0,0.1)} node[anchor=east,color=black] {$b$};
		\bluedot{(0.6,0.9)} node[anchor=east,color=black] {$\chk{b}$};
		\end{tikzpicture}
		\text{ if } k \ge -1 \right),
		\end{equation}
		
		\noindent\begin{minipage}{0.5\linewidth}
			\begin{equation} \label{theo-eq:right-curl1}
			\begin{tikzpicture}[anchorbase]
			\draw[->] (0,-0.75) .. controls (0,0.5) and (0.5,0.5) .. (0.5,0) .. controls (0.5,-0.5) and (0,-0.5) .. (0,0.75);
			\end{tikzpicture}
			= \delta_{k,0}\
			\begin{tikzpicture}[anchorbase]
			\draw[->] (0,-0.75) -- (0,0.75);
			\end{tikzpicture}
			\quad \text{if } k \ge 0,
			\end{equation}
		\end{minipage}%
		\begin{minipage}{0.5\linewidth}
			\begin{equation} \label{theo-eq:clockwise-circ1}
			\begin{tikzpicture}[anchorbase]
			\draw[<-] (0,0.3) arc(90:450:0.3);
			\redcircle{(-0.3,0)} node[anchor=east,color=black] {$r$};
			\bluedot{(0.3,0)} node[anchor=west,color=black] {$f$};
			\end{tikzpicture}
			\ = -\delta_{r,k-1} \tr(f)
			\quad \text{if } 0 \le r < k,
			\end{equation}
		\end{minipage}\par\vspace{\belowdisplayskip}
		
		\noindent\begin{minipage}{0.5\linewidth}
			\begin{equation} \label{theo-eq:left-curl1}
			\begin{tikzpicture}[anchorbase]
			\draw[->] (0,-0.75) .. controls (0,0.5) and (-0.5,0.5) .. (-0.5,0) .. controls (-0.5,-0.5) and (0,-0.5) .. (0,0.75);
			\end{tikzpicture}
			= \delta_{k,0}\
			\begin{tikzpicture}[anchorbase]
			\draw[->] (0,-0.75) -- (0,0.75);
			\end{tikzpicture}
			\quad \text{if } k \le 0,
			\end{equation}
		\end{minipage}%
		\begin{minipage}{0.5\linewidth}
			\begin{equation} \label{theo-eq:counterclockwise-circ1}
			\begin{tikzpicture}[anchorbase]
			\draw[->] (0,0.3) arc(90:450:0.3);
			\redcircle{(0.3,0)} node[anchor=west,color=black] {$r$};
			\bluedot{(-0.3,0)} node[anchor=east,color=black] {$f$};
			\end{tikzpicture}
			\ = \delta_{r,-k-1} \tr(f)
			\quad \text{if } 0 \le r < -k.
			\end{equation}
		\end{minipage}\par\vspace{\belowdisplayskip}
		
		Moreover, $\Heis_{F,k}$ can be presented equivalently as the strict $\kk$-linear monoidal category generated by the objects $\sQ_+$, $\sQ_-$, and morphisms $s,x,c,d,c',d'$, and $\beta_f$, $f \in F$, subject only to the relations \cref{rel:token-homom1,rel:braid-up1,rel:doublecross-up1,rel:dot-token-up-slide1,rel:tokenslide-up-right1,rel:dotslide1.1,rel:right-adjunction-up1,rel:right-adjunction-down1,theo-eq:doublecross-up-down1,theo-eq:doublecross-down-up1,theo-eq:right-curl1,theo-eq:clockwise-circ1,theo-eq:left-curl1,theo-eq:counterclockwise-circ1}.
		In the above relations, in addition to the rightward crossing $t$ defined by \cref{eq:t-def1}, we have used the left crossing $t' \colon \sQ_- \sQ_+ \to \sQ_+ \sQ_-$ defined by
		\begin{equation} \label{eq:t-def-alt1}
		t' =
		\begin{tikzpicture}[anchorbase]
		\draw [<-](0,0) -- (0.6,0.6);
		\draw [->](0.6,0) -- (0,0.6);
		\end{tikzpicture}
		\ :=\
		\begin{tikzpicture}[anchorbase,scale=0.6]
		\draw[<-] (0.3,0) -- (-0.3,-1);
		\draw[<-] (-0.75,-1) -- (-0.75,-0.5) .. controls (-0.75,-0.2) and (-0.5,0) .. (0,-0.5) .. controls (0.5,-1) and (0.75,-0.8) .. (0.75,-0.5) -- (0.75,0);
		\end{tikzpicture}
		\ ,
		\end{equation}
		and the negatively dotted bubbles defined, for $f \in F$, by
		\begin{gather} \label{theo-eq:neg-ccbubble1}
		\ccbubble{$f$}{}{\dotlabel{r-k-1}}
		= \sum_{b_1,\dotsc,b_{r-1} \in B} \det
		\left( \cbubble{$\chk{b}_{j-1}b_j$}{}{\dotlabel{i-j+k}} \right)_{i,j=1}^r,
		\quad \text{if } r \le k,
		\\ \label{theo-eq:neg-cbubble1}
		\cbubble{$f$}{}{\dotlabel{r+k-1}}
		= (-1)^{r+1} \sum_{b_1,\dotsc,b_{r-1} \in B} \det
		\left( \ccbubble{$\chk{b}_{j-1}b_j$}{}{\dotlabel{i-j-k}} \right)_{i,j=1}^r,
		\quad \text{if } r \le -k,
		\end{gather}
		where we adopt the convention that $\chk{b}_0 = f$ and $b_r = 1$, and we interpret the determinants as $\tr(f)$ if $r=0$ and as $0$ if $r < 0$. 
	\end{theo}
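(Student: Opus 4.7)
The plan is to prove the theorem in three stages, modeled on Brundan's strategy for the higher-level Heisenberg categories in \cite{Bru17}. First I would establish existence and uniqueness of $c'$ and $d'$ in $\Heis_{F,k}$ as given by \cref{def:H1}. Since the inversion relation makes the matrix \cref{eq:inversion-relation-pos-l.1} (or \cref{eq:inversion-relation-neg-l.1}) an actual isomorphism, we already have at our disposal the derived morphisms $t'$ and the decorated left cup and cap from \cref{eq:left-crossing1,eq:decorated-left-cap-cup1,eq:inversion-leftcup-def1,eq:inversion-leftcap-def1}. I would define $c'$ and $d'$ explicitly by bending $t'$ around the right cup/cap and adding correction terms built from the decorated left cup/cap composed with suitable counterclockwise bubbles. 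Uniqueness then follows because any candidate pair must satisfy the left zigzag relations, which together with \cref{rel:right-adjunction-up1,rel:right-adjunction-down1} pin them down.

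Next I would verify the listed relations. The double-crossing identities \cref{theo-eq:doublecross-up-down1,theo-eq:doublecross-down-up1} follow from \cref{rel:up-down-doublecross1,rel:down-up-doublecross1} once the decorated left cups and caps appearing there are rewritten in terms of $c'$, $d'$, and the negatively-dotted bubbles defined by \cref{theo-eq:neg-ccbubble1,theo-eq:neg-cbubble1}. The bubble evaluations \cref{theo-eq:clockwise-circ1,theo-eq:counterclockwise-circ1} reduce, after expanding $f = \sum_b \tr(\chk{b}f)\, b$ in the basis, to the inversion identities \cref{rel:inversion3-xi-greater1,rel:inversion3-xi-smaller1}. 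The curl relations \cref{theo-eq:right-curl1,theo-eq:left-curl1} follow from \cref{rel:inversion2-xi-greater1,rel:inversion2-xi-smaller1} together with the adjunctions \cref{rel:right-adjunction-up1,rel:right-adjunction-down1}, the case $k=0$ being the degenerate one in which the curl collapses to a straight strand.

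For the equivalent presentation, let $\widetilde{\Heis}$ denote the category presented by the generators $s, x, c, d, c', d', \beta_f$ subject only to the theorem's relations. The previous stage yields a functor $\widetilde{\Heis} \to \Heis_{F,k}$ sending generators to generators, and the task is to construct an inverse functor, which amounts to deriving the inversion relation of \cref{def:H1} from the relations of the theorem. I would do so by writing down an explicit candidate inverse matrix for \cref{eq:inversion-relation-pos-l.1} whose entries are assembled from $c'$, $d'$, and the negatively-dotted bubbles. After Laplace expansion, checking that this candidate really is an inverse reduces to an infinite family of bubble-sliding identities.

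The main obstacle is this last step. Without access to the inversion relation itself, one must verify the identities among products of positively and negatively dotted bubbles needed to prove that the proposed inverse matrix genuinely inverts \cref{eq:inversion-relation-pos-l.1}. These identities are encoded implicitly by the determinantal formulas \cref{theo-eq:neg-ccbubble1,theo-eq:neg-cbubble1}, but extracting them requires a careful induction on dot degree combined with the \emph{infinite Grassmannian} style relations familiar from the Kac--Moody categorification literature, and this is where the bulk of the technical work lies.
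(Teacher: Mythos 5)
The first thing to say is that the paper contains no proof of \cref{theo:alternate-presentation1}: it is recalled verbatim from \cite[Theorem 1.2]{S17} (specialized to the ungraded, non-super, symmetric-trace setting), and the only supplementary information the paper records is the pair of explicit formulas \cref{eq:left-cup-def1,eq:left-cap-def1} for $c'$ and $d'$, likewise quoted from \cite[page 11-12]{S17}. So there is no in-paper argument to measure your proposal against; the honest comparison is with the proof in \cite{S17} (itself modeled on \cite{Bru17}), and at the level of detail you give, your outline does track that proof: harvest the consequences of the inversion relation ($t'$, the decorated left cups and caps, \cref{rel:up-down-doublecross1,rel:down-up-doublecross1,rel:inversion2-xi-greater1,rel:inversion3-xi-greater1}, \dots), define $c'$ and $d'$ from this data, verify the listed relations, and then build mutually inverse functors between the two presentations, the hard direction being the recovery of the inversion relation from the new relations via an explicit candidate inverse matrix and infinite-Grassmannian-type bubble identities. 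You correctly locate the technical center of mass in that last step.

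Two points in your sketch do not match the actual construction. First, $c'$ and $d'$ are not ``a bent $t'$ plus correction terms built from the decorated left cup/cap'': by \cref{eq:left-cup-def1,eq:left-cap-def1} each is given by a case split on the sign of $k$ --- for $k>0$ the morphism $c'$ is the negative of a single decorated left cup labelled $(k-1,1)$, while for $k\le 0$ it is $t'$ bent around the right cup and carrying $-k$ dots, with no mixed sum --- and dually for $d'$. Second, your uniqueness argument is not sound as stated: the left zigzag relations are not among the hypotheses of the theorem but are derived consequences (compare \cref{rel:zigzag-leftdown2,rel:zigzag-leftup2} in the colored setting), and even granting them, an adjunction datum $(c',d')$ exhibiting $\sQ_-$ as a left dual of $\sQ_+$ is determined only up to composing with an automorphism of $\sQ_-$, so left plus right adjunction cannot pin down the specific pair. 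The uniqueness argument that actually works is that the listed relations force the morphisms assembled from $t'$, $c'$, $d'$ and the bubbles to be a two-sided inverse of the isomorphism \cref{eq:inversion-relation-pos-l.1} (resp.\ \cref{eq:inversion-relation-neg-l.1}); since inverses are unique, $c'$ and $d'$ are determined. With those two repairs, your plan is the right one.
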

	The morphisms $c'$ and $d'$ of \cref{theo:alternate-presentation1} are defined as (see \cite[page 12]{S17}):
	
	\noindent\begin{minipage}{0.5\linewidth}
		\begin{equation} \label{eq:left-cup-def1}
		c' =
		\begin{tikzpicture}[anchorbase]
		\draw[<-] (0,0.2) -- (0,0) arc (180:360:.3) -- (0.6,0.2);
		\end{tikzpicture}
		:=
		\begin{cases}
		-\
		\begin{tikzpicture}[>=stealth,baseline={([yshift=1ex]current bounding box.center)}]
		\draw[<-] (0,0.2) -- (0,0) arc (180:360:.3) -- (0.6,0.2);
		\greensquare{(0.3,-0.3)} node[anchor=north,color=black] {\squarelabel{(k-1,1)}};
		\end{tikzpicture}
		& \text{if } k > 0, \\
		\begin{tikzpicture}[anchorbase]
		\draw[<-] (0,0) .. controls (0.3,-0.3) and (0.6,-0.3) .. (0.6,-0.6) arc(360:180:0.3) .. controls (0,-0.3) and (0.3,-0.3) .. (0.6,0);
		\redcircle{(0.6,-0.6)} node[anchor=west,color=black] {$-k$};
		\end{tikzpicture}
		& \text{if } k \le 0,
		\end{cases}
		\end{equation}
	\end{minipage}%
	\begin{minipage}{0.5\linewidth}
		\begin{equation} \label{eq:left-cap-def1}
		d' =
		\begin{tikzpicture}[anchorbase]
		\draw[<-] (0,-0.2) -- (0,0) arc (180:0:.3) -- (0.6,-0.2);
		\end{tikzpicture}
		:=
		\begin{cases}
		\begin{tikzpicture}[anchorbase]
		\draw[<-] (0,0) .. controls (0.3,0.3) and (0.6,0.3) .. (0.6,0.6) arc(0:180:0.3) .. controls (0,0.3) and (0.3,0.3) .. (0.6,0);
		\redcircle{(0.6,0.6)} node[anchor=west,color=black] {$k$};
		\end{tikzpicture}
		& \text{if } k \ge 0, \\
		\begin{tikzpicture}[>=stealth,baseline={([yshift=-2ex]current bounding box.center)}]
		\draw[<-] (0,-0.2) -- (0,0) arc (180:0:.3) -- (0.6,-0.2);
		\greensquare{(0.3,0.3)} node[anchor=south,color=black] {\squarelabel{(-k-1,1)}};
		\end{tikzpicture}
		& \text{if } k < 0.
		\end{cases}
		\end{equation}
	\end{minipage}\par\vspace{\belowdisplayskip}
	
The following theorem, \cref{theo:alternate-presentation2}, gives another alternate presentation of $\Heis_{F,k}$.
	
	\begin{theo} \label{theo:alternate-presentation2}
		There are unique morphisms 
		$c_{i}'
		\colon \one \to \sQ_{+} \sQ_{-}$ and 
		$d_{i}' 
		\colon \sQ_{-} \sQ_{+} \to \one,$ $i=1,\dotsc,n,$ in $\Heis_{F,k}$, drawn as
		\[
		c_i' =
		\begin{tikzpicture}[anchorbase]
		\draw[<-] (0,.2) -- (0,0) arc (180:360:.3) -- (.6,.2) node[anchor=south] {$i$};
		\end{tikzpicture} 
		\ ,\qquad
		d_i' =
		\begin{tikzpicture}[anchorbase]
		\draw[<-] (0,-.2) -- (0,0) arc (180:0:.3) -- (.6,-.2) node[anchor=north] {$i$};
		\end{tikzpicture}
		\ ,
		\]
		such that the following relations hold for all $i,j=1,\dotsc,n$ and $f_i\in F_i$:
		\begin{equation} \label{theo-eq:doublecross-up-down2}
		\begin{tikzpicture}[anchorbase]
		\draw[->] (0,0) node[anchor=north] {$i$}.. controls (0.5,0.5) .. (0,1);
		\draw[<-] (0.5,0) .. controls (0,0.5) .. (0.5,1) node[anchor=south] {$j$};
		\end{tikzpicture}
		\ =\
		\begin{tikzpicture}[anchorbase]
		\draw[->] (0,0) node[anchor=north] {$i$} -- (0,1);
		\draw[<-] (0.5,0) -- (0.5,1) node[anchor=south] {$j$};
		\end{tikzpicture}
		\ + \delta_{i,j} \sum_{r,s \ge 0} \sum_{a,b \in B_i}
		\begin{tikzpicture}[anchorbase]
		\draw[->] (0,0) node[anchor=north] {$i$} -- (0,0.7) arc (180:0:0.3) -- (0.6,0);
		\draw[<-] (0,2.1) -- (0,1.8) arc (180:360:0.3) -- (0.6,2.1)  node[anchor=south] {$i$};
		\redcircle{(0,0.6)} node[anchor=west,color=black] {$r$};
		\bluedot{(0,0.2)} node[anchor=west,color=black] {$\chk{b}$};
		\redcircle{(0.6,1.8)} node[anchor=west,color=black] {$s$};
		\bluedot{(0,1.8)} node[anchor=east,color=black] {$a$};
		\draw[->] (1,1.55) arc(90:450:0.3);
		\draw{(1,0.95)} node[anchor=north] {$i$};
		\bluedot{(1.3,1.25)} node[anchor=west,color=black] {$\chk{a} b$};
		\redcircle{(0.7,1.25)} node[anchor=east,color=black] {\dotlabel{-r-s-2}};
		\end{tikzpicture}
		\left( =
		\begin{tikzpicture}[anchorbase]
		\draw[->] (0,0)  node[anchor=north] {$i$} -- (0,1);
		\draw[<-] (0.5,0) -- (0.5,1)  node[anchor=south] {$j$} ;
		\end{tikzpicture}
		\ + \delta_{i,j}\delta_{k,1} \sum_{b \in B_i}
		\begin{tikzpicture}[anchorbase]
		\draw[<-] (0,1.1) -- (0,0.9) arc(180:360:0.3) -- (0.6,1.1)  node[anchor=south] {$i$} ;
		\draw[->] (0,-0.1)  node[anchor=north] {$i$}  -- (0,0.1) arc(180:0:0.3) -- (0.6,-0.1);
		\bluedot{(0,0.1)} node[anchor=west,color=black] {$\chk{b}$};
		\bluedot{(0,0.9)} node[anchor=west,color=black] {$b$};
		\end{tikzpicture}
		\text{ if } k \le 1 \right),
		\end{equation}
		\begin{equation} \label{theo-eq:doublecross-down-up2}
		\begin{tikzpicture}[anchorbase]
		\draw[<-] (0,0) .. controls (0.5,0.5) .. (0,1) node[anchor=south] {$i$} ;
		\draw[->] (0.5,0) node[anchor=north] {$j$}  .. controls (0,0.5) .. (0.5,1);
		\end{tikzpicture}
		\ =\
		\begin{tikzpicture}[anchorbase]
		\draw[<-] (0,0) -- (0,1) node[anchor=south] {$i$};
		\draw[->] (0.5,0)  node[anchor=north] {$j$} -- (0.5,1);
		\end{tikzpicture}
		\ + \delta_{i,j}\sum_{r,s \ge 0} \sum_{a,b \in B_i}
		\begin{tikzpicture}[anchorbase]
		\draw[->] (0,2) node[anchor=south] {$i$}  -- (0,1.3) arc (180:360:0.3) -- (0.6,2);
		\draw[<-] (0,-0.1) -- (0,0.2) arc (180:0:0.3) -- (0.6,-0.1) node[anchor=north] {$i$} ;
		\redcircle{(0.6,1.7)} node[anchor=east,color=black] {$r$};
		\bluedot{(0.6,1.4)} node[anchor=west,color=black] {$\chk{b}$};
		\redcircle{(0.6,0.2)} node[anchor=east,color=black] {$s$};
		\bluedot{(0,0.2)} node[anchor=east,color=black] {$a$};
		\draw[->] (1,1.1) arc(90:-270:0.3);
		\draw{(1,0.5)} node[anchor=north] {$i$};
		\bluedot{(1.3,0.8)} node[anchor=west,color=black] {$\chk{a}b$};
		\redcircle{(0.7,0.8)} node[anchor=east,color=black] {\dotlabel{-r-s-2}};
		\end{tikzpicture}
		\left( =
		\begin{tikzpicture}[anchorbase]
		\draw[<-] (0,0) -- (0,1) node[anchor=south] {$i$} ;
		\draw[->] (0.5,0) node[anchor=north] {$j$}  -- (0.5,1);
		\end{tikzpicture}
		\ - \delta_{i,j}\delta_{k,-1} \sum_{b \in B_i}
		\begin{tikzpicture}[anchorbase]
		\draw[->] (0,1.1) node[anchor=south] {$i$}  -- (0,0.9) arc(180:360:0.3) -- (0.6,1.1);
		\draw[<-] (0,-0.1) -- (0,0.1) arc(180:0:0.3) -- (0.6,-0.1) node[anchor=north] {$i$} ;
		\bluedot{(0,0.1)} node[anchor=east,color=black] {$b$};
		\bluedot{(0.6,0.9)} node[anchor=east,color=black] {$\chk{b}$};
		\end{tikzpicture}
		\text{ if } k \ge -1 \right),
		\end{equation}
		\noindent\begin{minipage}{0.5\linewidth}
			\begin{equation} \label{theo-eq:right-curl2}
			\begin{tikzpicture}[anchorbase]
			\draw[->] (0,-0.75) node[anchor=north] {$i$} .. controls (0,0.5) and (0.5,0.5) .. (0.5,0) .. controls (0.5,-0.5) and (0,-0.5) .. (0,0.75);
			\end{tikzpicture}
			= \delta_{k,0}\
			\begin{tikzpicture}[anchorbase]
			\draw[->] (0,-0.75) node[anchor=north] {$i$} -- (0,0.75);			
			\end{tikzpicture}
			\quad \text{if } k \ge 0,
			\end{equation}
		\end{minipage}%
		\begin{minipage}{0.5\linewidth}
			\begin{equation} \label{theo-eq:clockwise-circ2}
			\begin{tikzpicture}[anchorbase]
			\draw[<-] (0,0.3) arc(90:450:0.3);
			\redcircle{(-0.3,0)} node[anchor=east,color=black] {$r$};
			\bluedot {(0.3,0)} node[anchor=west,color=black] {$f_i$};
			\draw {(0,-0.3)} node[anchor=north] {$i$};
			\end{tikzpicture}
			\ = -\delta_{r,k-1} \tr_i(f_i)
			\quad \text{if } 0 \le r < k,
			\end{equation}
		\end{minipage}\par\vspace{\belowdisplayskip}
		\noindent\begin{minipage}{0.5\linewidth}
			\begin{equation} \label{theo-eq:left-curl2}
			\begin{tikzpicture}[anchorbase]
			\draw[->] (0,-0.75) node[anchor=north] {$i$} .. controls (0,0.5) and (-0.5,0.5) .. (-0.5,0) .. controls (-0.5,-0.5) and (0,-0.5) .. (0,0.75);
			\end{tikzpicture}
			= \delta_{k,0}\
			\begin{tikzpicture}[anchorbase]
			\draw[->] (0,-0.75) node[anchor=north] {$i$} -- (0,0.75);
			\end{tikzpicture}
			\quad \text{if } k \le 0,
			\end{equation}
		\end{minipage}%
		\begin{minipage}{0.5\linewidth}
			\begin{equation} \label{theo-eq:counterclockwise-circ2}
			\begin{tikzpicture}[anchorbase]
			\draw[->] (0,0.3) arc(90:450:0.3);
			\redcircle{(0.3,0)} node[anchor=west,color=black] {$r$};
			\bluedot{(-0.3,0)} node[anchor=east, color=black] {$f_i$};
			\draw {(0,-0.3)} node[anchor=north] {$i$};
			\end{tikzpicture}
			\ = \delta_{r,-k-1} \tr_i(f_i)
			\quad \text{if } 0 \le r < -k.
			\end{equation}
		\end{minipage}\par\vspace{\belowdisplayskip}
		
		Moreover, $\Heis_{F,k}$ can be presented equivalently as the strict $\kk$-linear monoidal category generated by the objects $\sQ_{+}$, $\sQ_{-}$ and morphisms $s_{i,j},x_i,c_i,d_i,c_i',d_i'$, $i,j=1,\dotsc,n$, subject only to the relations \cref{rel:token-homom2,rel:braid-up2,rel:doublecross-up2,rel:dot-token-up-slide2,rel:tokenslide-up-right2,rel:dotslide1.2,rel:right-adjunction-up2,rel:right-adjunction-down2,theo-eq:doublecross-up-down2,theo-eq:doublecross-down-up2,theo-eq:right-curl2,theo-eq:clockwise-circ2,theo-eq:left-curl2,theo-eq:counterclockwise-circ2}.
		In the above relations, in addition to the rightward crossing defined by \cref{eq:t-def2}, we have used the left crossing, for $i,j=1,\dotsc,n,$ defined by
		\begin{equation} \label{eq:t-def-alt2}
		t_{i,j}'=
		\begin{tikzpicture}[anchorbase]
		\draw [<-](0,0) -- (0.6,0.6) node[anchor=south] {$i$};
		\draw [->](0.6,0) node[anchor=north] {$j$} -- (0,0.6);
		\end{tikzpicture}
		\ :=\
		\begin{tikzpicture}[anchorbase,scale=0.6]
		\draw[<-] (0.3,0) -- (-0.3,-1) node[anchor=north] {$j$};
		\draw[<-] (-0.75,-1) -- (-0.75,-0.5) .. controls (-0.75,-0.2) and (-0.5,0) .. (0,-0.5) .. controls (0.5,-1) and (0.75,-0.8) .. (0.75,-0.5) -- (0.75,0) node[anchor=south] {$i$};
		\end{tikzpicture},
		\, \quad Q_{-}Q_{+} \mapsto Q_{+}Q_{-}
		\ ,
		\end{equation}
		and the negatively dotted bubbles defined, for $i=1,\dotsc,n$, by
		\begin{gather} \label{theo-eq:neg-ccbubble2}
		\ccbubble{$f$}{$l$}{\dotlabel{r-k-1}}
		= \sum_{b_1,\dotsc,b_{r-1} \in B_l}\det
		\left( \cbubble{$\chk{b_{j-1}}b_j$}{\small $l$}{\dotlabel{i-j+k}} \right)_{i,j=1}^r,
		\quad \text{if } r \le k,
		\\ \label{theo-eq:neg-cbubble2}
		\cbubble{$f$}{$l$}{\dotlabel{r+k-1}}
		= (-1)^{r+1} \sum_{b_1,\dotsc,b_{r-1} \in B_l} \det
		\left(\ccbubble{$\chk{b_{j-1}}b_j$}{\small $l$}{\dotlabel{i-j-k}} \right)_{i,j=1}^r,
		\quad \text{if } r \le -k,
		\end{gather}
		where we adopt the convention that $\chk{b}_0 = f$ and $b_r = 1$, and we interpret the determinants as $\tr_i(f)$ if $r=0$ and as $0$ if $r < 0$. 
	\end{theo}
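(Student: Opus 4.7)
The plan is to leverage \cref{theo:alternate-presentation1} together with the isomorphism $\Heis_{F,k} \cong \Heis_{F,k}'$ of \cref{theo:isomorphic} to transfer the alternate presentation from the uncolored setting to the colored setting. Since all generators proposed in the theorem are colored, it is convenient to work in $\Heis_{F,k}'$ throughout.

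First I would define $c_i'$ and $d_i'$ as the colored analogues of \cref{eq:left-cup-def1,eq:left-cap-def1}, i.e., by the same formulas but with all strands colored by $i$ and with $\tr$ replaced by $\tr_i$ on the right-hand sides of the bubble relations. Uniqueness of $c_i'$ and $d_i'$ then reduces to the uniqueness statement in \cref{theo:alternate-presentation1}, using that $c' = \sum_{i=1}^n c_i'$ and $d' = \sum_{i=1}^n d_i'$ under the isomorphism of \cref{theo:isomorphic}, together with the orthogonality of the idempotents $e_i$.

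Next I would verify the six displayed relations \cref{theo-eq:doublecross-up-down2,theo-eq:doublecross-down-up2,theo-eq:right-curl2,theo-eq:clockwise-circ2,theo-eq:left-curl2,theo-eq:counterclockwise-circ2} by taking each corresponding uncolored relation from \cref{theo:alternate-presentation1} and attaching the appropriate idempotent tokens $e_i$, $e_j$ to the external strands, then applying \cref{lem:colored-decorated-cups-caps} and the token-slide relations to push the idempotents through. The Kronecker deltas $\delta_{i,j}$ appear because $e_i e_j = \delta_{i,j} e_i$, which collapses the right-hand side sums over $B$ to sums over $B_i$. The identity $\tr(f_i) = \tr_i(f_i)$ for $f_i \in F_i$ produces the correct right-hand sides of the curl/bubble relations, and the negatively dotted bubble formulas \cref{theo-eq:neg-ccbubble2,theo-eq:neg-cbubble2} follow from \cref{theo-eq:neg-ccbubble1,theo-eq:neg-cbubble1} by restricting the basis sums to $B_l$ and noting that cross-color terms in the determinants vanish by \cref{eq:dual-basis-def}.

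Finally, to establish the equivalent presentation, I would construct mutually inverse strict $\kk$-linear monoidal functors between $\Heis_{F,k}'$ and the strict $\kk$-linear monoidal category $\widetilde{\cH}$ presented by the colored generators $s_{i,j}, x_i, c_i, d_i, c_i', d_i', \beta_{(f,i)}$ and the colored relations listed in the theorem, mirroring the argument in the proof of \cref{theo:isomorphic}. The functor $\widetilde{\cH} \to \Heis_{F,k}'$ sends generators to their obvious counterparts; its well-definedness is precisely the content of the previous two paragraphs. In the reverse direction, we use the alternate presentation of $\Heis_{F,k}$ from \cref{theo:alternate-presentation1}, compose with the isomorphism $\Heis_{F,k} \cong \Heis_{F,k}'$, and send each uncolored generator to the sum over colors of its colored analogues, exactly as $G$ did in \cref{theo:isomorphic}. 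The main obstacle will be verifying well-definedness of this reverse functor on the relations \cref{theo-eq:doublecross-up-down1,theo-eq:doublecross-down-up1,theo-eq:neg-ccbubble1,theo-eq:neg-cbubble1}: each is a sum over $B$ or involves a determinant of bubbles whose entries involve products $\chk{b}_{j-1} b_j$, and the key observation is that such a product is nonzero only when $b_{j-1}$ and $b_j$ lie in the same $B_l$, so summing the colored relations \cref{theo-eq:doublecross-up-down2,theo-eq:doublecross-down-up2,theo-eq:neg-ccbubble2,theo-eq:neg-cbubble2} over all $i, j, l$ recovers the uncolored versions exactly.
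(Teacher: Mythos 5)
Your proposal is correct and follows essentially the same route as the paper: define $c_i'$ and $d_i'$ by the colored analogues of \cref{eq:left-cup-def1,eq:left-cap-def1}, verify the colored relations by attaching idempotents to the uncolored ones, deduce uniqueness from the uniqueness of $c'$ and $d'$ in \cref{theo:alternate-presentation1} via $c'=\sum_i c_i'$, and establish the presentation by a pair of mutually inverse functors (one attaching idempotents, one summing over colors) exactly as in \cref{theo:isomorphic}. The only cosmetic difference is that you route the argument through $\Heis_{F,k}'$ while the paper works directly with $\Heis_{F,k}$ in the presentation of \cref{theo:alternate-presentation1}; your explicit remark that cross-color terms in the determinant formulas vanish because $\chk{b}_{j-1}b_j=0$ across colors is a point the paper leaves implicit.
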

	\begin{proof}
		
		Let $\Heis_{F,k}''$ be the strict $\kk-$linear monoidal category generated by the objects $\sQ_{+}$, $\sQ_{-}$ and morphisms $s_{i,j},x_i,c_i,d_i,c_i',d_i'$, $i,j=1,\dotsc,n$, subject only to the relations \cref{rel:token-homom2,rel:braid-up2,rel:doublecross-up2,rel:dot-token-up-slide2,rel:tokenslide-up-right2,rel:dotslide1.2,rel:right-adjunction-up2,rel:right-adjunction-down2,theo-eq:doublecross-up-down2,theo-eq:doublecross-down-up2,theo-eq:right-curl2,theo-eq:clockwise-circ2,theo-eq:left-curl2,theo-eq:counterclockwise-circ2}. The rightward crossing, $t_{i,j}$, and leftward crossing, $t_{i,j}'$, are given by \cref{eq:t-def2} and \cref{eq:t-def-alt2}, respectively, and the negatively dotted bubbles are defined by \cref{theo-eq:neg-ccbubble2} and \cref{theo-eq:neg-cbubble2}.
		
		View $\Heis_{F,k}$ in the presentation given in \cref{theo:alternate-presentation1}. That is, $\Heis_{F,k}$ is generated by the objects $\sQ_{+}$, $\sQ_{-}$ and morphisms $s,x,c,d,c',d'$, subject only to the relations \cref{rel:token-homom1,rel:braid-up1,rel:doublecross-up1,rel:dot-token-up-slide1,rel:tokenslide-up-right1,rel:dotslide1.1,rel:right-adjunction-up1,rel:right-adjunction-down1,theo-eq:doublecross-up-down1,theo-eq:doublecross-down-up1,theo-eq:right-curl1,theo-eq:clockwise-circ1,theo-eq:left-curl1,theo-eq:counterclockwise-circ1}, where the rightward crossing, $t$, and leftward crossing, $t'$, are given by $\cref{eq:t-def1}$ and $\cref{eq:t-def-alt1}$, respectively, and the negatively dotted bubbles are defined by $\cref{theo-eq:neg-ccbubble1}$ and $\cref{theo-eq:neg-cbubble1}$. 
		
		We first prove that $\Heis_{F,k}''$ is isomorphic to $\Heis_{F,k}$.
		
		We define a monoidal functor $A:\Heis_{F,k}''\to\Heis_{F,k}$ as follows. On objects define 
		\[
		A\left(\sQ_\pm\right)= \sQ_\pm,
		\]
		and on morphisms, for $f_i\in F_i$, $i,j=1,\dotsc,n$, define
		\[
		A\left(s_{i,j}\right)=s_{i,j}, \quad A\left(c_i\right)=c_i, \quad A\left(d_i\right)=d_i, \quad A\left(x_i\right)=x_i, \quad A\left(\beta_{(f_i,i)}\right)=\beta_{f_i}, \]
		\[ A(c_i')=
		c_i', \quad  A\left(d_i'\right)=d_i',\]
		and extend $A$ so that it is linear and monoidal.
		We define another monoidal functor $B:\Heis_{F,k}\to\Heis_{F,k}''$ as follows. On objects define 
		\[
		B\left(\sQ_\pm\right)= \sQ_\pm,
		\]
		and on morphisms, define
		\[
		B\left(s\right)=\sum_{i,j=1}^ns_{i,j}, \quad B\left(c\right)=\sum_{i=1}^nc_i,\quad B\left(d\right)=\sum_{i=1}^nd_i, \quad B\left(\beta_{(f_1,\dotsc,f_n)}\right)=\sum_{i=1}^n\beta_{(f_i,i)}, 
		\]
		\[
		B\left(c'\right)=\sum_{i=1}^n
		c_i', \quad  B\left(d'\right)=\sum_{i=1}^nd_i',
		\]
		and extend $B$ so that it is linear and monoidal.

		We claim that $A$ and $B$ are strict $\kk$-linear monoidal functors that are two-sided inverses of each other. If $A$ and $B$ exist (i.e. if they are well-defined), it is clear that $B\circ A=id_{\Heis_{F,k}''}$ and $A\circ B=id_{\Heis_{F,k}}$.	
		So it is enough to show that $A$ and $B$ are well-defined. To do this, we show that $A$ preserves the defining relations of $\Heis_{F,k}''$ and $B$ preserves the defining relations of $\Heis_{F,k}$.
		That is, we show that $B$ preserves the relations
		\cref{rel:token-homom1,rel:braid-up1,rel:doublecross-up1,rel:dot-token-up-slide1,rel:tokenslide-up-right1,rel:dotslide1.1,rel:right-adjunction-up1,rel:right-adjunction-down1,theo-eq:doublecross-up-down1,theo-eq:doublecross-down-up1,theo-eq:right-curl1,theo-eq:clockwise-circ1,theo-eq:left-curl1,theo-eq:counterclockwise-circ1} in $\Heis_{F,k}$, and that $A$ preserves the relations  \cref{rel:token-homom2,rel:braid-up2,rel:doublecross-up2,rel:dot-token-up-slide2,rel:tokenslide-up-right2,rel:dotslide1.2,rel:right-adjunction-up2,rel:right-adjunction-down2,theo-eq:doublecross-up-down2,theo-eq:doublecross-down-up2,theo-eq:right-curl2,theo-eq:clockwise-circ2,theo-eq:left-curl2,theo-eq:counterclockwise-circ2} in $\Heis_{F,k}''$.
		We first show that $A$ preserves the defining relations of $\Heis_{F,k}''.$

		The functor $A$ preserves the relations  \cref{rel:token-homom2,rel:braid-up2,rel:doublecross-up2,rel:dot-token-up-slide2,rel:tokenslide-up-right2,rel:dotslide1.2,rel:right-adjunction-up2,rel:right-adjunction-down2} in $\Heis_{F,k}''.$ This follows by the same arguments used in \cref{theo:isomorphic}, where we showed that the functor $F$ preserves these relations in $\Heis_{F,k}'$.
		
		The functor $A$ preserves the relations  \cref{theo-eq:doublecross-down-up2,theo-eq:doublecross-up-down2,theo-eq:right-curl2,theo-eq:left-curl2} in $\Heis_{F,k}''$. This follows directly from \cref{theo:alternate-presentation1} by attaching the appropriate idempotents above and below both sides of the equations \cref{theo-eq:doublecross-up-down1,theo-eq:doublecross-down-up1,theo-eq:right-curl1,theo-eq:left-curl1}, and noting that the resulting equations are the images of \cref{theo-eq:doublecross-down-up2,theo-eq:doublecross-up-down2,theo-eq:right-curl2,theo-eq:left-curl2} under $A$, respectively.
		
		The functor $A$ preserves the relations \cref{theo-eq:clockwise-circ2,theo-eq:counterclockwise-circ2} in $\Heis_{F,k}''$. This follows by letting $f\in F_i$, $i=1,\dotsc,n,$ in equations \cref{theo-eq:clockwise-circ1,theo-eq:counterclockwise-circ1}, and noting that the resulting equations are the images of \cref{theo-eq:clockwise-circ2,theo-eq:counterclockwise-circ2} under $A$, respectively. Therefore, we have shown that $A$ preserves the defining relations of $\Heis_{F,k}''$. Now we show that the $B$ preserves the defining relations of $\Heis_{F,k}$.
		
		The functor $B$ preserves the relations  \cref{rel:token-homom1,rel:braid-up1,rel:doublecross-up1,rel:dot-token-up-slide1,rel:tokenslide-up-right1,rel:dotslide1.1,rel:right-adjunction-up1,rel:right-adjunction-down1} in $\Heis_{F,k}.$ This follows by the same arguments used in \cref{theo:isomorphic}, where we showed that the functor $G$ preserves these relations in $\Heis_{F,k}$.
		
		The functor $B$ preserves the relations  \cref{theo-eq:doublecross-down-up1,theo-eq:doublecross-up-down1} in $\Heis_{F,k}$. This follows by summing both sides of the relations \cref{theo-eq:doublecross-up-down2,theo-eq:doublecross-down-up2} over all $i,j=1,\dotsc,n$, and noting that the resulting equations are the images of \cref{theo-eq:doublecross-down-up1,theo-eq:doublecross-up-down1} under $B$, respectively.
		
		The functor $B$ preserves the relations \cref{theo-eq:right-curl1,theo-eq:left-curl1} in $\Heis_{F,k}$.  This follows by summing both sides of the relations \cref{theo-eq:right-curl2,theo-eq:left-curl2} over all $i=1,\dotsc,n,$ and noting that the resulting equations are the images of \cref{theo-eq:right-curl1,theo-eq:left-curl1} under $B$, respectively.
		
		The functor $B$ preserves the relations \cref{theo-eq:clockwise-circ1,theo-eq:counterclockwise-circ1} in $\Heis_{F,k}$. Let $f=(f_1,\dotsc,f_n),$ and note that the relations \cref{theo-eq:clockwise-circ1,theo-eq:counterclockwise-circ1} can be written as
		\[
		\sum\limits_{i=1}^{n}\begin{tikzpicture}[anchorbase]
		\draw[<-] (0,0.3) arc(90:450:0.3);
		\redcircle{(-0.3,0)} node[anchor=east,color=black] {$r$};
		\bluedot{(0.3,0)} node[anchor=west,color=black] {$f_i$};
		\draw{(0,-0.3)} node[anchor=north] {$i$};
		\end{tikzpicture}
		\ = -\delta_{r,k-1} \sum\limits_{i=1}^{n} \tr_i(f_i)
		\quad \text{if } 0 \le r < k, \quad \text{and}
		\]
		\[
		\sum\limits_{i=1}^{n}\begin{tikzpicture}[anchorbase]
		\draw[->] (0,0.3) arc(90:450:0.3);
		\redcircle{(0.3,0)} node[anchor=west,color=black] {$r$};
		\bluedot{(-0.3,0)} node[anchor=east, color=black] {$f_i$};
		\draw {(0,-0.3)} node[anchor=north] {$i$};
		\end{tikzpicture}
		\ = \delta_{r,-k-1} \sum\limits_{i=1}^{n}\tr_i(f_i)
		\quad \text{if } 0 \le r < -k, f_i\in F_i,
		\]
		respectively. Since the relations \cref{theo-eq:clockwise-circ2,theo-eq:counterclockwise-circ2} hold in $\Heis_{F,k}'',$ $B$ preserves the relations \cref{theo-eq:clockwise-circ1,theo-eq:counterclockwise-circ1} in $\Heis_{F,k}.$ Therefore, we have shown that $B$ preserves the defining relations of $\Heis_{F,k}$. Hence, $\Heis_{F,k}''$ is isomorphic to $\Heis_{F,k}$.
		
		Now we show that there are unique morphisms, $c_i'$ and $d_i'$, $i=1,\dotsc,n$, in $\Heis_{F,k}$ satisfying \cref{theo-eq:doublecross-up-down2,theo-eq:doublecross-down-up2,theo-eq:right-curl2,theo-eq:clockwise-circ2,theo-eq:left-curl2,theo-eq:counterclockwise-circ2}. Define 
		
		\noindent\begin{minipage}{0.5\linewidth}
			\begin{equation} \label{eq:left-cup-def2}
			c_i' :=
			\begin{tikzpicture}[anchorbase]
			\draw[<-] (0,0.2) -- (0,0) arc (180:360:.3) -- (0.6,0.2) node[anchor=south] {$i$};
			\end{tikzpicture}
			=
			\begin{cases}
			-\
			\begin{tikzpicture}[>=stealth,baseline={([yshift=1ex]current bounding box.center)}]
			\draw[<-] (0,0.2) -- (0,0) arc (180:360:.3) -- (0.6,0.2) node[anchor=south] {$i$};
			\greensquare{(0.3,-0.3)} node[anchor=north,color=black] {\squarelabel{(k-1,1)}};
			\end{tikzpicture}
			& \text{if } k > 0, \\
			\begin{tikzpicture}[anchorbase]
			\draw[<-] (0,0) .. controls (0.3,-0.3) and (0.6,-0.3) .. (0.6,-0.6) arc(360:180:0.3) .. controls (0,-0.3) and (0.3,-0.3) .. (0.6,0) node[anchor=south] {$i$};
			\redcircle{(0.6,-0.6)} node[anchor=west,color=black] {$-k$};
			\end{tikzpicture}
			& \text{if } k \le 0,
			\end{cases}
			\end{equation}
		\end{minipage}%
		\begin{minipage}{0.5\linewidth}
			\begin{equation} \label{eq:left-cap-def2}
			d_i' :=
			\begin{tikzpicture}[anchorbase]
			\draw[<-] (0,-0.2) -- (0,0) arc (180:0:.3) -- (0.6,-0.2) node[anchor=north] {$i$};
			\end{tikzpicture}
			=
			\begin{cases}
			\begin{tikzpicture}[anchorbase]
			\draw[<-] (0,0) .. controls (0.3,0.3) and (0.6,0.3) .. (0.6,0.6) arc(0:180:0.3) .. controls (0,0.3) and (0.3,0.3) .. (0.6,0) node[anchor=north] {$i$};
			\redcircle{(0.6,0.6)} node[anchor=west,color=black] {$k$};
			\end{tikzpicture}
			& \text{if } k \ge 0, \\
			\begin{tikzpicture}[>=stealth,baseline={([yshift=-2ex]current bounding box.center)}]
			\draw[<-] (0,-0.2) -- (0,0) arc (180:0:.3) -- (0.6,-0.2) node[anchor=north] {$i$};
			\greensquare{(0.3,0.3)} node[anchor=south,color=black] {\squarelabel{(-k-1,1)}};
			\end{tikzpicture}
			& \text{if } k < 0.
			\end{cases}
			\end{equation}
		\end{minipage}\par\vspace{\belowdisplayskip}
		
		First we show that the morphisms $c_i'$ and $d_i'$, $i=1,\dotsc,n$, satisfy the relations \cref{theo-eq:doublecross-up-down2,theo-eq:doublecross-down-up2,theo-eq:right-curl2,theo-eq:clockwise-circ2,theo-eq:left-curl2,theo-eq:counterclockwise-circ2} in $\Heis_{F,k}$. Since \cref{theo-eq:doublecross-up-down1,theo-eq:doublecross-down-up1,theo-eq:right-curl1,theo-eq:left-curl1} hold in $\Heis_{F,k}$, attaching the appropriate idempotents above or below both sides of these relations gives \cref{theo-eq:doublecross-up-down2,theo-eq:doublecross-down-up2,theo-eq:right-curl2,theo-eq:left-curl2}, respectively, in $\Heis_{F,k}$. Since \cref{theo-eq:clockwise-circ1,theo-eq:counterclockwise-circ1} hold in $\Heis_{F,k}$, we can let the label of the tokens of these relations be $f\in F_i$. This gives the relations \cref{theo-eq:clockwise-circ2,theo-eq:counterclockwise-circ2}, respectively, in $\Heis_{F,k}$. So we have shown that $c_i'$ and $d_i'$ are morphisms in $\Heis_{F,k}$ that satisfy the relations \cref{theo-eq:doublecross-up-down2,theo-eq:doublecross-down-up2,theo-eq:right-curl2,theo-eq:clockwise-circ2,theo-eq:left-curl2,theo-eq:counterclockwise-circ2}. 
		
		Now we show that $c_i'$ and $d_i'$, $i=1,\dotsc,n,$ are $\textit{unique}$ morphisms in $\Heis_{F,k}$ that satisfy \cref{theo-eq:doublecross-up-down2,theo-eq:doublecross-down-up2,theo-eq:right-curl2,theo-eq:clockwise-circ2,theo-eq:left-curl2,theo-eq:counterclockwise-circ2}. Summing both sides of \cref{theo-eq:doublecross-up-down2,theo-eq:doublecross-down-up2} over all $i,j=1,\dotsc,n$ gives the relations \cref{theo-eq:doublecross-up-down1,theo-eq:doublecross-down-up1}, respectively. Summing both sides of \cref{theo-eq:right-curl2,theo-eq:left-curl2} over all $i=1,\dotsc,n$, gives the relations \cref{theo-eq:right-curl1,theo-eq:left-curl1}, respectively. Letting $f=(f_1,\dotsc,f_n)\in F$, and noting that the relations \cref{theo-eq:clockwise-circ1,theo-eq:counterclockwise-circ1} in $\Heis_{F,k}$ are equivalent to the equations
		\[
		\sum\limits_{i=1}^{n}\begin{tikzpicture}[anchorbase]
		\draw[<-] (0,0.3) arc(90:450:0.3);
		\redcircle{(-0.3,0)} node[anchor=east,color=black] {$r$};
		\bluedot{(0.3,0)} node[anchor=west,color=black] {$f_i$};
		\draw{(0,-0.3)} node[anchor=north] {$i$};
		\end{tikzpicture}
		\ = -\delta_{r,k-1} \sum\limits_{i=1}^{n} \tr_i(f_i)
		\quad \text{if } 0 \le r < k,  \quad f_i\in F_i, \quad \text{and}
		\]
		\[
		\sum\limits_{i=1}^{n}\begin{tikzpicture}[anchorbase]
		\draw[->] (0,0.3) arc(90:450:0.3);
		\redcircle{(0.3,0)} node[anchor=west,color=black] {$r$};
		\bluedot{(-0.3,0)} node[anchor=east, color=black] {$f_i$};
		\draw {(0,-0.3)} node[anchor=north] {$i$};
		\end{tikzpicture}
		\ = \delta_{r,-k-1} \sum\limits_{i=1}^{n}\tr_i(f_i)
		\quad \text{if } 0 \le r < -k, \quad f_i\in F_i,
		\]
		respectively, shows that the relations  \cref{theo-eq:clockwise-circ2,theo-eq:counterclockwise-circ2} in $\Heis_{F,k}$ imply the relations \cref{theo-eq:clockwise-circ1,theo-eq:counterclockwise-circ1} in $\Heis_{F,k}$. So the existence of the morphisms $c_i'$ and $d_i'$, $i=1,\dotsc,n$, in $\Heis_{F,k}$ satisfying \cref{theo-eq:doublecross-up-down2,theo-eq:doublecross-down-up2,theo-eq:right-curl2,theo-eq:clockwise-circ2,theo-eq:left-curl2,theo-eq:counterclockwise-circ2} implies the existence of morphisms $c'$ and $d'$ in $\Heis_{F,k}$ satisfying \cref{theo-eq:doublecross-up-down1,theo-eq:doublecross-down-up1,theo-eq:right-curl1,theo-eq:clockwise-circ1,theo-eq:left-curl1,theo-eq:counterclockwise-circ1}. Attaching the appropriate idempotent to the bottom or top of $c'$ and $d'$ gives the morphisms $c_i'$ and $d_i'$ in $\Heis_{F,k},$ respectively. Therefore, since $c'$ and $d'$ are \textit{unique} morphisms in $\Heis_{F,k}$ satisfying \cref{theo-eq:doublecross-up-down1,theo-eq:doublecross-down-up1,theo-eq:right-curl1,theo-eq:clockwise-circ1,theo-eq:left-curl1,theo-eq:counterclockwise-circ1}, we must have that $c_i'$ and $d_i'$, $i=1,\dotsc,n,$ are \textit{unique} morphisms in $\Heis_{F,k}$ satisfying \cref{theo-eq:doublecross-up-down2,theo-eq:doublecross-down-up2,theo-eq:right-curl2,theo-eq:clockwise-circ2,theo-eq:left-curl2,theo-eq:counterclockwise-circ2}.
	\end{proof}
	
	\begin{theo} \label{theo:relations2}
		Using the notation from \cref{theo:alternate-presentation2}, the following relations are consequences of the defining relations.
		\begin{enumerate}[wide]
			\item \emph{Infinite grassmannian relations}: For $i=1,\dotsc,n$ and $f,g\in F_i$ we have
			
			\noindent\begin{minipage}{0.5\linewidth}
				\begin{equation} \label{eq:inf-grass1.2}
				\begin{tikzpicture}[anchorbase]
				\draw[<-] (0,0.3) arc(90:450:0.3);
				\redcircle{(-0.3,0)} node[anchor=east,color=black] {$r$};
				\bluedot{(0.3,0)} node[anchor=west,color=black] {$f$};
				\draw{(0,-0.3)} node[anchor=north] {$i$};
				\end{tikzpicture}
				\ = -\delta_{r,k-1} \tr_i(f)
				\quad \text{if } r \le k-1,
				\end{equation}
			\end{minipage}%
			\begin{minipage}{0.5\linewidth}
				\begin{equation} \label{eq:inf-grass2.2}
				\begin{tikzpicture}[anchorbase]
				\draw[->] (0,0.3) arc(90:450:0.3);
				\redcircle{(0.3,0)} node[anchor=west,color=black] {$r$};
				\bluedot{(-0.3,0)} node[anchor=east,color=black] {$f$};
				\draw{(0,-0.3)} node[anchor=north] {$i$};
				\end{tikzpicture}
				\ = \delta_{r,-k-1} \tr_i(f)
				\quad \text{if } r \le -k-1,
				\end{equation}
			\end{minipage}\par\vspace{\belowdisplayskip}
			
			\begin{equation} \label{eq:inf-grass3.2}
			\sum_{\substack{r,s \in \Z \\ r + s = t-2}} \sum_{b \in B_i}\
			\begin{tikzpicture}[anchorbase]
			\draw[<-] (0,0.3) arc(90:450:0.3);
			\draw[->] (0,-0.5) arc(90:450:0.3);
			\redcircle{(-0.3,0)} node[anchor=east,color=black] {$r$};
			\bluedot{(0.3,0)} node[anchor=west,color=black] {$fb$};
			\redcircle{(0.3,-0.8)} node[anchor=west,color=black] {$s$};
			\bluedot{(-0.3,-0.8)} node[anchor=east,color=black] {$\chk{b}g$};
			\draw{(0,-1.1)} node[anchor=north,color=black] {$i$};
			\draw{(0,0.3)} node[anchor=south,color=black] {$i$};
			\end{tikzpicture}
			\ =    \sum_{\substack{r,s \ge 0 \\ r + s = t}} \sum_{b \in B_i }\
			\begin{tikzpicture}[anchorbase]
			\draw[<-] (0,0.3) arc(90:450:0.3);
			\draw[->] (0,-0.5) arc(90:450:0.3);
			\redcircle{(-0.3,0)} node[anchor=east,color=black] {\dotlabel{r+k-1}};
			\bluedot{(0.3,0)} node[anchor=west,color=black] {$fb$};
			\redcircle{(0.3,-0.8)} node[anchor=west,color=black] {\dotlabel{s-k-1}};
			\bluedot{(-0.3,-0.8)} node[anchor=east,color=black] {$\chk{b}g$};
			\draw{(0,-1.1)} node[anchor=north,color=black] {$i$};
			\draw{(0,0.3)} node[anchor=south,color=black] {$i$};
			\end{tikzpicture}
			\ = - \delta_{t,0} \tr_i(fg).
			\end{equation}
			
			\item \emph{Left adjunction}: For all $i=1,\dotsc,n$
			
			\noindent\begin{minipage}{0.5\linewidth}
				\begin{equation} \label{rel:zigzag-leftdown2}
				\begin{tikzpicture}[anchorbase]
				\draw[<-] (0,0) -- (0,0.6) arc(180:0:0.2) -- (0.4,0.4) arc(180:360:0.2) -- (0.8,1) node[anchor=south] {$i$};
				\end{tikzpicture}
				\ =\
				\begin{tikzpicture}[anchorbase]
				\draw[<-] (0,0) -- (0,1) node[anchor=south] {$i$};
				\end{tikzpicture}\ ,
				\end{equation}
			\end{minipage}%
			\begin{minipage}{0.5\linewidth}
				\begin{equation} \label{rel:zigzag-leftup2}
				\begin{tikzpicture}[anchorbase]
				\draw[<-] (0,1) -- (0,0.4) arc(180:360:0.2) -- (0.4,0.6) arc(180:0:0.2) -- (0.8,0) node[anchor=north] {$i$};
				\end{tikzpicture}
				\ =\
				\begin{tikzpicture}[anchorbase]
				\draw[->] (0,0) node[anchor=north] {$i$} -- (0,1);
				\end{tikzpicture}\ .
				\end{equation}
			\end{minipage}\par\vspace{\belowdisplayskip}
			
			\item \emph{Rotation relations}: For all $f \in F_i$, $i=1,\dotsc,n,$
			
			\noindent\begin{minipage}{0.4\linewidth}
				\begin{equation} \label{rel:token-rotation2}
				\begin{tikzpicture}[anchorbase]
				\draw[<-] (0,0) -- (0,1) node[anchor=south] {$i$};
				\bluedot{(0,0.5)} node[anchor=west,color=black] {$f$};
				\end{tikzpicture}
				\ :=\
				\begin{tikzpicture}[anchorbase]
				\draw[->] (0,1) node[anchor=south] {$i$} -- (0,0.4) arc(180:360:0.2) -- (0.4,0.6) arc(180:0:0.2) -- (0.8,0);
				\bluedot{(0.4,0.5)} node[anchor=west,color=black] {$f$};
				\end{tikzpicture}
				\ =\
				\begin{tikzpicture}[anchorbase]
				\draw[<-] (0,0) -- (0,0.6) arc(180:0:0.2) -- (0.4,0.4) arc(180:360:0.2) -- (0.8,1) node[anchor=south] {$i$};
				\bluedot{(0.4,0.5)};
				\draw (0.4,0.5) node[anchor=west,color=black] {$f$};
				\end{tikzpicture}
				\ ,
				\end{equation}
			\end{minipage}%
			\begin{minipage}{0.6\linewidth}
				\begin{equation} \label{rel:dot-rotation2}
				\begin{tikzpicture}[anchorbase]
				\draw[<-] (0,0) -- (0,1.4) node[anchor=south] {$i$};
				\redcircle{(0,0.7)};
				\end{tikzpicture}
				\ :=\
				\begin{tikzpicture}[anchorbase]
				\draw[->] (0,1.2) node[anchor=south] {$i$} -- (0,0.4) arc(180:360:0.2) -- (0.4,0.6) arc(180:0:0.2) -- (0.8,-0.2);
				\redcircle{(0.4,0.5)};
				\end{tikzpicture}
				\ =\
				\begin{tikzpicture}[anchorbase]
				\draw[<-] (0,-0.2) -- (0,0.6) arc(180:0:0.2) -- (0.4,0.4) arc(180:360:0.2) -- (0.8,1.2) node[anchor=south] {$i$};
				\redcircle{(0.4,0.5)};
				\end{tikzpicture},
				\end{equation}
			\end{minipage}\par\vspace{\belowdisplayskip}
			\begin{equation} \label{rel:crossing-rotation2}
			\begin{tikzpicture}[anchorbase]
			\draw[<-] (0,0) -- (1,1) node[anchor=south] {$i$};
			\draw[<-] (1,0) -- (0,1) node[anchor=south] {$j$};
			\end{tikzpicture}
			\ :=\
			\begin{tikzpicture}[anchorbase,scale=0.5]
			\draw[->] (-1.5,1.5) node[anchor=south] {$i$} .. controls (-1.5,0.5) and (-1,-1) .. (0,0) .. controls (1,1) and (1.5,-0.5) .. (1.5,-1.5);
			\draw[->] (-2,1.5)  node[anchor=south] {$j$} .. controls (-2,-2) and (1.5,-1.5) .. (0,0) .. controls (-1.5,1.5) and (2,2) .. (2,-1.5);
			\end{tikzpicture}
			\ =\
			\begin{tikzpicture}[anchorbase,scale=0.5]
			\draw[->] (1.5,1.5) node[anchor=south] {$j$} .. controls (1.5,0.5) and (1,-1) .. (0,0) .. controls (-1,1) and (-1.5,-0.5) .. (-1.5,-1.5);
			\draw[->] (2,1.5) node[anchor=south] {$i$} .. controls (2,-2) and (-1.5,-1.5) .. (0,0) .. controls (1.5,1.5) and (-2,2) .. (-2,-1.5);
			\end{tikzpicture}
			\ .
			\end{equation}
			
			\item \emph{Curl relations}: For all $r \ge 0$ and $i=1,\dotsc,n,$
			
			\noindent\begin{minipage}{0.5\linewidth}
				\begin{equation} \label{rel:left-dotted-curl2}
				\begin{tikzpicture}[anchorbase]
				\draw[->] (0,-0.75) node[anchor=north] {$i$} .. controls (0,0.5) and (-0.5,0.5) .. (-0.5,0) .. controls (-0.5,-0.5) and (0,-0.5) .. (0,0.75);
				\redcircle{(-0.5,0)} node[anchor=east,color=black] {$r$};
				\end{tikzpicture}
				= \sum_{s \ge 0}\sum_{b \in B_i}
				\begin{tikzpicture}[anchorbase]
				\draw[->] (0,-0.75) node[anchor=north] {$i$} -- (0,0.75);
				\draw[->] (-0.75,0) arc(90:450:0.3);
				\bluedot{(-0.45,-0.30)} node[anchor=east,color=black] {$b$};
				\draw{(-0.75,-0.6)} node[anchor=north] {$i$};
				\redcircle{(-1.05,-0.30)} node[anchor=east,color=black] {\dotlabel{r-s-1}};
				\redcircle{(0,0)} node[anchor=west,color=black] {$s$};
				\bluedot{(0,-0.30)} node[anchor=west,color=black] {$\chk{b}$};
				\end{tikzpicture}\ ,
				\end{equation}
			\end{minipage}%
			\begin{minipage}{0.5\linewidth}
				\begin{equation} \label{rel:right-dotted-curl2}
				\begin{tikzpicture}[anchorbase]
				\draw[->] (0,-0.75) node[anchor=north] {$i$} .. controls (0,0.5) and (0.5,0.5) .. (0.5,0) .. controls (0.5,-0.5) and (0,-0.5) .. (0,0.75);
				\redcircle{(0.5,0)} node[anchor=west,color=black] {$r$};
				\end{tikzpicture}
				= - \sum_{s \ge 0}\sum_{b \in B_i}
				\begin{tikzpicture}[anchorbase]
				\draw[->] (0,-0.75) node[anchor=north] {$i$} -- (0,0.75);
				\draw[<-] (0.95,0) arc(90:450:0.3);
				\bluedot{(0.65,-0.30)} node[anchor=east,color=black] {$\chk{b}$};
				\draw{(0.95,-0.6)} node[anchor=north] {$i$};
				\redcircle{(1.25,-0.30)} node[anchor=west,color=black] {\dotlabel{r-s-1}};
				\redcircle{(0,0)} node[anchor=east,color=black] {$s$};
				\bluedot{(0,0.3)} node[anchor=west,color=black] {$b$};
				\end{tikzpicture}\ .
				\end{equation}
			\end{minipage}\par\vspace{\belowdisplayskip}
			
			\item \emph{Bubble slides}:  For all $f \in F_i,$ $i=1,\dotsc,n,$ and $r \ge 0$,
			\begin{equation} \label{rel:clockwise-bubble-slide2}
			\begin{tikzpicture}[anchorbase]
			\draw[<-] (0,0.3) arc(90:450:0.3);
			\draw[->] (0.8,-1) node[anchor=north] {$i$} -- (0.8,1);
			\redcircle{(0.3,0)} node[anchor=west,color=black] {$r$};
			\bluedot{(-0.3,0)} node[anchor=east,color=black] {$f$};
			\draw{(0,-0.3)} node[anchor=north] {$j$};
			\end{tikzpicture}
			\ =\
			\begin{tikzpicture}[anchorbase]
			\draw[<-] (0,0.3) arc(90:450:0.3);
			\draw[->] (-0.8,-1) node[anchor=north] {$i$} -- (-0.8,1);
			\redcircle{(0.3,0)} node[anchor=west,color=black] {$r$};
			\bluedot{(-0.3,0)} node[anchor=east,color=black] {$f$};
			\draw{(0,-0.3)} node[anchor=north] {$j$};
			\end{tikzpicture}
			\ - \delta_{i,j}\sum_{t \ge 0} \sum\limits_{s=0}^{t}\sum_{a,b \in B_i}\
			\begin{tikzpicture}[anchorbase]
			\draw[<-] (0.3,0.3) arc(90:450:0.3);
			\draw[->] (-0.8,-1) node[anchor=north] {$i$} -- (-0.8,1);
			\redcircle{(0.6,0)} node[anchor=west,color=black] {\dotlabel{r-t-2}};
			\bluedot{(0,0)} node[anchor=east,color=black] {$\chk{a}f$};
			\bluedot{(-0.8,0.6)} node[anchor=west,color=black] {$b a \chk{b}$};
			\redcircle{(-0.8,0.1)} node[anchor=east,color=black] {$t$};
			\draw{(0.3,-0.3)} node[anchor=north] {$i$};
			\end{tikzpicture}
			\ ,
			\end{equation}
			\begin{equation} \label{rel:counterclockwise-bubble-slide2}
			\begin{tikzpicture}[anchorbase]
			\draw[->] (0,0.3) arc(90:450:0.3);
			\draw[->] (-0.8,-1) node[anchor=north] {$i$} -- (-0.8,1);
			\redcircle{(0.3,0)} node[anchor=west,color=black] {$r$};
			\bluedot{(-0.3,0)} node[anchor=east,color=black] {$f$};
			\draw{(0,-0.3)} node[anchor=north] {$j$};
			\end{tikzpicture}
			\ =\
			\begin{tikzpicture}[anchorbase]
			\draw[->] (0,0.3) arc(90:450:0.3);
			\draw[->] (0.8,-1) node[anchor=north] {$i$}-- (0.8,1);
			\redcircle{(0.3,0)} node[anchor=west,color=black] {$r$};
			\bluedot{(-0.3,0)} node[anchor=east,color=black] {$f$};
			\draw{(0,-0.3)} node[anchor=north] {$j$};
			\end{tikzpicture}
			\ - \delta_{i,j}\sum_{t \ge 0}\sum\limits_{s=0}^{t} \sum_{a,b \in B_i} \
			\begin{tikzpicture}[anchorbase]
			\draw[->] (-2.2,0.3) arc(90:450:0.3);
			\draw[->] (-0.5,-1)  node[anchor=north] {$i$} -- (-0.5,1);
			\redcircle{(-1.9,0)} node[anchor=west,color=black] {\dotlabel{r-t-2}};
			\bluedot{(-2.5,0)} node[anchor=east,color=black] {$\chk{a}f$};
			\bluedot{(-0.5,0.6)} node[anchor=west,color=black] {$b a \chk{b}$};
			\redcircle{(-0.5,0.1)} node[anchor=west,color=black] {$t$};
			\draw{(-2.2,-0.3)} node[anchor=north] {$i$};
			\end{tikzpicture}
			\ .
			\end{equation}

			\item \emph{Alternating braid relation}: For all $i,j,l=1,\dotsc,n$
			\begin{equation} \label{rel:braid-alternating2}
			\begin{tikzpicture}[anchorbase]
			\draw[->] (0,0) node[anchor=north] {$i$} -- (1,1);
			\draw[->] (1,0) node[anchor=north] {$l$} -- (0,1);
			\draw[<-] (0.5,0) node[anchor=north] {$j$} .. controls (0,0.5) .. (0.5,1);
			\end{tikzpicture}
			\ -\
			\begin{tikzpicture}[anchorbase]
			\draw[->] (0,0) node[anchor=north] {$i$} -- (1,1);
			\draw[->] (1,0)  node[anchor=north] {$l$}-- (0,1);
			\draw[<-] (0.5,0) node[anchor=north] {$j$} .. controls (1,0.5) .. (0.5,1);
			\end{tikzpicture}
			\ =\
			\begin{cases}
			\delta_{i,j}\delta_{j,l}\sum_{r,s,t \ge 0} \sum_{a,b,e \in B_i}
			\begin{tikzpicture}[anchorbase]
			\draw[<-] (0,2) -- (0,1.7) arc (180:360:0.3) -- (0.6,2) node[anchor=south] {$i$};
			\draw[->] (0,-0.1) node[anchor=north] {$i$} -- (0,0.5) arc (180:0:0.3) -- (0.6,-0.1);
			\draw[<-] (1,1.4) arc(90:-270:0.3);
			\draw[->] (2.3,-0.1)  node[anchor=north] {$i$} -- (2.3,2);
			\draw{(1,0.8)} node[anchor=north] {$i$};
			\redcircle{(0.6,1.7)} node[anchor=east,color=black] {$r$};
			\bluedot{(0,1.7)} node[anchor=east,color=black] {$a$};
			\redcircle{(0.6,0.5)} node[anchor=east,color=black] {$s$};
			\bluedot{(0,0.5)} node[anchor=east,color=black] {$\chk{e}$};
			\bluedot{(1.3,1.1)} node[anchor=west,color=black] {$\chk{a} b$};
			\redcircle{(0.7,1.1)} node[anchor=east,color=black] {\dotlabel{-r-s-t-3}};
			\bluedot{(2.3,1.25)} node[anchor=west,color=black] {$e$};
			\redcircle{(2.3,0.95)} node[anchor=west,color=black] {$t$};
			\bluedot{(2.3,0.65)} node[anchor=west,color=black] {$\chk{b}$};
			\end{tikzpicture}
			& \text{if } k \ge 2,
			\\
			0 & \text{if } -1 \le k \le 1,
			\\
			\delta_{i,j}\delta_{j,l}\sum_{r,s,t \ge 0} \sum_{a,b,e \in B_i} \
			\begin{tikzpicture}[anchorbase]
			\draw[<-] (0,0) -- (0,0.2) arc(180:0:0.3) -- (0.6,0)  node[anchor=north] {$i$};
			\draw[->] (0,2)  node[anchor=south] {$i$} -- (0,1.5) arc(180:360:0.3) -- (0.6,2);
			\draw[->] (1.5,1.15) arc(90:-270:0.3);
			\draw[->] (-0.8,0)  node[anchor=north] {$i$} -- (-0.8,2);
			\bluedot{(0.6,1.8)} node[anchor=west,color=black] {\dotlabel{e\chk{b}}};
			\draw{(1.5,0.55)} node[anchor=north] {$i$};
			\redcircle{(0.6,1.5)} node[anchor=west,color=black] {$r$};
			\redcircle{(0.6,0.2)} node[anchor=west,color=black] {$s$};
			\bluedot{(0,0.2)} node[anchor=west,color=black] {$a$};
			\bluedot{(1.8,0.85)} node[anchor=west,color=black] {$\chk{a} b$};
			\redcircle{(1.2,0.85)} node[anchor=east,color=black] {\dotlabel{-r-s-t-3}};
			\bluedot{(-0.8,0.8)} node[anchor=east,color=black] {$\chk{e}$};
			\redcircle{(-0.8,1.2)} node[anchor=east,color=black] {$t$};
			\end{tikzpicture}
			& \text{if } k \le -2.
			\end{cases}
			\end{equation}
		\end{enumerate}
	\end{theo}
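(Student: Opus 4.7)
The plan is to deduce each of the stated colored relations from the corresponding uncolored relation already established to hold in $\Heis_{F,k}$ (proven in \cite{S17}), by attaching the appropriate idempotents $e_i$ (in the form of tokens) to the strands. This is the same strategy used in the proof of \cref{theo:alternate-presentation2}: under the notational conventions of \cref{section:alternate-presentation}, a strand of color $i$ is by definition an uncolored strand decorated by $e_i$, so every colored diagram is literally an uncolored diagram with idempotent tokens inserted.

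For the infinite grassmannian relations \cref{eq:inf-grass1.2,eq:inf-grass2.2,eq:inf-grass3.2}, I would start from the uncolored analogs in $\Heis_{F,k}$ and specialize $f \in F_i$ (or $f,g \in F_i$), using that $\tr(f) = \tr_i(f)$ on this subspace together with \cref{eq:dual-basis-def}. The key point for \cref{eq:inf-grass3.2} is that when a bubble carries a token from $F_i$, the uncolored sum $\sum_{b \in B}$ collapses to $\sum_{b \in B_i}$, since any $b \in B_j$ with $j \neq i$ would create a subexpression of the form $fb = 0$. The left adjunction \cref{rel:zigzag-leftdown2,rel:zigzag-leftup2}, rotation relations \cref{rel:token-rotation2,rel:dot-rotation2,rel:crossing-rotation2}, and curl relations \cref{rel:left-dotted-curl2,rel:right-dotted-curl2} then follow by attaching the appropriate idempotents above or below both sides of the corresponding uncolored relations and using the token-slide relations \cref{rel:dot-token-up-slide2,rel:tokenslide-up-right2,rel:tokenslide-up-left2,rel:colored-right-cap-token-move,rel:colored-right-cup-token-move} together with \cref{lem:colored-decorated-cups-caps} to move the idempotents into the positions dictated by the color conventions.

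For the bubble slides \cref{rel:clockwise-bubble-slide2,rel:counterclockwise-bubble-slide2} and the alternating braid relation \cref{rel:braid-alternating2}, I would again start from the uncolored analogs and attach idempotents $e_i$, $e_j$, $e_l$ to the appropriate strands. The $\delta_{i,j}$ and $\delta_{i,j}\delta_{j,l}$ factors arise naturally from the orthogonality $e_i e_j = \delta_{i,j} e_i$: any subterm of the uncolored right-hand side in which a token from $F_i$ meets an idempotent $e_j$ with $j \neq i$ vanishes by \cref{eq:dual-basis-def}, so the sums over $B$ restrict to sums over $B_i$ precisely when the strand colors agree, and vanish otherwise. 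The uniqueness of $c_i'$ and $d_i'$ established in \cref{theo:alternate-presentation2} ensures that the decorated left cups and caps appearing in these relations (via the bubble conventions) are the correct ones.

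The main obstacle will be bookkeeping rather than any new conceptual ingredient: the alternating braid relation \cref{rel:braid-alternating2} involves three summation indices, a central bubble, and multiple tokens of the form $\chk{a}b$, $\chk{e}$, and $e\chk{b}$ distributed on different strands and bubbles, so verifying that the cross-colored contributions cancel and that each surviving label lies in $B_i$ requires a careful, repeated application of the token-slide relations and of the orthogonality $e_i e_j = \delta_{i,j} e_i$ at each instance. The bubble slide proofs are analogous but simpler, involving only two summation indices. No additional ingredients beyond those already introduced in \cref{section:review,section:alternate-presentation} and in \cite{S17} are needed.
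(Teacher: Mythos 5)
Your proposal follows essentially the same route as the paper: the infinite grassmannian relations are obtained from their uncolored analogs in \cite[Theorem 1.3]{S17} by specializing the token labels to $F_i$, and the remaining relations are obtained by attaching the appropriate idempotents to the tops or bottoms of both sides of the corresponding uncolored relations. Your additional remarks on why the sums over $B$ collapse to $B_i$ and where the $\delta_{i,j}$ factors come from are correct elaborations of details the paper leaves implicit.
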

	\begin{proof}
		The infinite Grassmanian relations, \cref{eq:inf-grass1.2,eq:inf-grass2.2,eq:inf-grass3.2}, follow from the infinite Grassmanian relations, $(1.27)$ to $(1.29)$, in \cite[Theorem 1.3]{S17}, respectively, by letting $f\in F_i$, $i=1,\dotsc,n$. The remaining relations, \cref{rel:zigzag-leftdown2} to \cref{rel:braid-alternating2}, follow from \cite[Theorem 1.3]{S17} by attaching the appropriate idempotents to the bottoms or tops of both sides of the relations $(1.30)$ to $(1.39)$ in \cite[Theorem 1.3]{S17}, respectively . 
	\end{proof}

	\section{An Equivalence of Categories}\label{section:equivalence}
	
	Let $\mathcal{C}$ and $\mathcal{D}$ be strict $\kk$-linear monoidal categories.
	
	\begin{defin} 
		Following \cite[Def. 1.1]{GK14}, the $\textit{tensor product}$ of $\mathcal{C}$ and $\mathcal{D}$, denoted $\mathcal{C}\otimes\mathcal{D}$, is defined as follows. For all $U,W,Y\in Ob\left(\mathcal{C}\right)$ and $V,X,Z\in Ob\left(\mathcal{D}\right)$, we have
		
		\begin{itemize}
			\item
			$Ob\left(\mathcal{C}\otimes\mathcal{D}\right)=\{\left(Y,X\right)|Y\in Ob(\mathcal{C})$ and $X\in Ob\left(\mathcal{D}\right)\}$,
			\item $Hom_{\mathcal{C}\otimes\mathcal{D}}\left(\left(W,X\right),\left(Y,Z\right)\right)=Hom_{\mathcal{C}}\left(W,Y\right)\otimes_{\kk}Hom_{\mathcal{D}}\left(X,Z\right)$, and
			\item 
			For all morphisms of the form $g=g_\mathcal{C}\otimes g_\mathcal{D}:\left(U,V\right)\to \left(W,X\right)$ and $f=f_\mathcal{C}\otimes f_\mathcal{D}: \left(W,X\right)\to \left(Y,Z\right)$ in $\mathcal{C}\otimes\mathcal{D}$, the composition $f\circ g$ is defined as
			\begin{center} 
				$f\circ g =  \left(f_\mathcal{C}\otimes f_\mathcal{D}\right)\circ\left(g_\mathcal{C}\otimes g_\mathcal{D}\right)=\left(f_\mathcal{C}\circ g_\mathcal{C}\right)\otimes \left(f_\mathcal{D}\circ g_\mathcal{D}\right): \left(U,V\right)\to \left(Y,Z\right)$.
			\end{center}
			Define composition between any two morphisms of $\mathcal{C}\otimes\mathcal{D}$ by extending the definition on simple tensors by linearity.
		\end{itemize}
	\end{defin}
	It is easy to see that the tensor product, $\mathcal{C}\otimes\mathcal{D}$, is a category.
	\details{
		\begin{lem}	
			The tensor product $\mathcal{C}\otimes_{\kk}\mathcal{D}$ is a category.
		\end{lem}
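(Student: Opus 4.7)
The plan is to verify the three defining properties of a category: that composition is a well-defined $\kk$-bilinear operation on hom-sets, that composition is associative, and that each object has an identity morphism satisfying the unit laws. Since the morphism spaces of $\mathcal{C} \otimes \mathcal{D}$ are tensor products over $\kk$ and composition is prescribed only on simple tensors, the main obstacle is to justify that the composition rule
\[
(f_{\mathcal{C}} \otimes f_{\mathcal{D}}) \circ (g_{\mathcal{C}} \otimes g_{\mathcal{D}}) := (f_{\mathcal{C}} \circ g_{\mathcal{C}}) \otimes (f_{\mathcal{D}} \circ g_{\mathcal{D}})
\]
extends unambiguously from simple tensors to all elements of $\Hom_\mathcal{C}(W,Y) \otimes_\kk \Hom_\mathcal{D}(X,Z)$.

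To address this, I would fix objects $(U,V)$, $(W,X)$, $(Y,Z)$ and define a map
\[
\Hom_{\mathcal{C}}(W,Y) \times \Hom_{\mathcal{D}}(X,Z) \times \Hom_{\mathcal{C}}(U,W) \times \Hom_{\mathcal{D}}(V,X) \to \Hom_{\mathcal{C}}(U,Y) \otimes_\kk \Hom_{\mathcal{D}}(V,Z)
\]
by $(f_\mathcal{C}, f_\mathcal{D}, g_\mathcal{C}, g_\mathcal{D}) \mapsto (f_\mathcal{C} \circ g_\mathcal{C}) \otimes (f_\mathcal{D} \circ g_\mathcal{D})$. This map is $\kk$-multilinear in each argument since composition in $\mathcal{C}$ and $\mathcal{D}$ is $\kk$-bilinear and the tensor product is $\kk$-bilinear. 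By the universal property of the tensor product, applied twice, it descends to a well-defined $\kk$-linear map
\[
\bigl(\Hom_{\mathcal{C}}(W,Y) \otimes_\kk \Hom_{\mathcal{D}}(X,Z)\bigr) \otimes_\kk \bigl(\Hom_{\mathcal{C}}(U,W) \otimes_\kk \Hom_{\mathcal{D}}(V,X)\bigr) \to \Hom_{\mathcal{C}}(U,Y) \otimes_\kk \Hom_{\mathcal{D}}(V,Z),
\]
giving the required composition.

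For associativity, I would check the identity $(f \circ g) \circ h = f \circ (g \circ h)$ on simple tensors $f = f_\mathcal{C} \otimes f_\mathcal{D}$, $g = g_\mathcal{C} \otimes g_\mathcal{D}$, $h = h_\mathcal{C} \otimes h_\mathcal{D}$, where it reduces immediately to associativity of composition in $\mathcal{C}$ and in $\mathcal{D}$. The extension to arbitrary elements of the hom-spaces follows from $\kk$-trilinearity of both sides together with the universal property, as above. Finally, for each object $(Y,X)$ of $\mathcal{C} \otimes \mathcal{D}$ I would set $\id_{(Y,X)} := \id_Y \otimes \id_X$ and verify the unit axioms on simple tensors $f = f_\mathcal{C} \otimes f_\mathcal{D}$ using the unit axioms in $\mathcal{C}$ and $\mathcal{D}$, then extend by linearity. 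No step beyond the universal-property bookkeeping is delicate; the argument is essentially formal.
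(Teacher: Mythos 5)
Your proof is correct and follows essentially the same route as the paper's: verify associativity and the unit laws on simple tensors and extend by $\kk$-linearity, with $\id_{(Y,X)} = \id_Y \otimes \id_X$. You are in fact more careful than the paper, which simply declares composition to be "extended by linearity" without the universal-property argument you give to show that this extension is well defined.
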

		\begin{proof}
			\begin{itemize}
				\item ($\textit{Composition of morphisms is associative}$)
				\
				Let $(S,T),(U,V), (W,X), (Y,Z)\in Ob(\mathcal{C}\otimes_{\kk}\mathcal{D})$ and let $h:(S,T)\to (U,V)$, $g:(U,V)\to (W,X)$, and $f:(W,X)\to (Y,Z)$ be morphisms in $Mor(\mathcal{C}\otimes_{\kk}\mathcal{D})$. Assume $f,g$, and $h$ are simple tensors. By definition, there exist morphisms $h_{\mathcal{C}}:S\to U$, $g_{\mathcal{C}}:U\to W$, and $f_{\mathcal{C}}: W\to Y$ in $Mor(\mathcal{C})$ and $h_{\mathcal{D}}:T\to V$, $g_{\mathcal{D}}:V\to X$, and $f_{\mathcal{D}}: X\to Z$ in $Mor(\mathcal{D})$ with $f=f_{\mathcal{C}}\otimes f_{\mathcal{D}}$, $g=g_{\mathcal{C}}\otimes g_{\mathcal{D}}$, and $h=h_{\mathcal{C}}\otimes h_{\mathcal{D}}$. Therefore, we have 
				\begin{align*}
				(f\circ g) \circ h &= ((f_{\mathcal{C}}\otimes f_{\mathcal{D}})\circ (g_{\mathcal{C}}\otimes g_{\mathcal{D}})) \circ (h_{\mathcal{C}}\otimes h_{\mathcal{D}}) = ((f_{\mathcal{C}}\circ g_{\mathcal{C}})\circ h_{\mathcal{C}})\otimes ((f_{\mathcal{D}}\circ g_{\mathcal{D}})\circ h_{\mathcal{D}}) \\&= (f_{\mathcal{C}}\circ (g_{\mathcal{C}}\circ h_{\mathcal{C}}))\otimes (f_{\mathcal{D}}\circ (g_{\mathcal{D}}\circ h_{\mathcal{D}})) = (f_{\mathcal{C}}\otimes f_{\mathcal{D}})\circ ((g_{\mathcal{C}}\otimes g_{\mathcal{D}}) \circ (h_{\mathcal{C}}\otimes h_{\mathcal{D}})) = f\circ (g\circ h).
				\end{align*}
				To show that composition of all morphisms in $\mathcal{C}\otimes_{\kk}\mathcal{D}$ is associative, we extend this property of the simple tensors by linearity.
				\item ($\textit{Existence of identity morphisms}$) 
				\
				Let $(X,Y)\in \mathcal{C}\otimes\mathcal{D}$. Then $1_{X\otimes Y}=1_X\otimes 1_Y$, where $1_X$ is the identity morphism on $X$ and $1_Y$ is the identity morphism on $Y$. Indeed, for all $(x,y)\in (X,Y)$, we have 	
				\[
				1_X\otimes 1_Y(x,y)=(1_X(x),1_Y(y))=(x,y).
				\]
			\end{itemize}
			Therefore, $\mathcal{C}\otimes_{\kk}\mathcal{D}$ is a category.
		\end{proof}
		This completes the proof.
	}
	We add additional structure to $\mathcal{C}\otimes\mathcal{D}$ to turn it into a strict $\kk$-linear monoidal category, and we call resulting category $\mathcal{C}\otimes\mathcal{D}$ (i.e.: we do not change the name):
	
	\begin{itemize}
		\item Equip $\mathcal{C}\otimes\mathcal{D}$ with a bifunctor ${\otimes}: \left(\mathcal{C}\otimes\mathcal{D}\right) \times \left(\mathcal{C}\otimes\mathcal{D}\right) \to \mathcal{C}\otimes\mathcal{D}$. On pairs of objects, we define $\left(\left(A_\mathcal{C},A_\mathcal{D}\right),\left(B_\mathcal{C},B_\mathcal{D}\right)\right)\mapsto \left(A_\mathcal{C}\otimes B_\mathcal{C}, A_\mathcal{D} \otimes B_\mathcal{D}\right)$. On pairs of morphisms that are simple tensors, we define $ \left(\left(f_{\mathcal{C}}\otimes f_{\mathcal{D}}\right),\left(g_{\mathcal{C}}\otimes g_{\mathcal{D}}\right)\right)\mapsto \left(f_\mathcal{C}\otimes g_\mathcal{C}\right)\otimes \left(f_\mathcal{D} \otimes g_\mathcal{D}\right)$, and we extend this definition by linearity, and
		\item Let $a\in\kk$, and $f=f_\mathcal{C}  \otimes f_\mathcal{D}$. We define $\kk$-action by the following equation:
		\[ a\cdot f=a\cdot \left(f_\mathcal{C}\otimes f_\mathcal{D}\right)=\left(a\cdot f_\mathcal{C}\right) \otimes  f_\mathcal{D}= f_\mathcal{C}\otimes \left(a\cdot f_\mathcal{D}\right).
		\]
	\end{itemize}
	It is straightforward to verify that $\mathcal{C}\otimes\mathcal{D}$ is a strict $\kk$-linear monoidal category, with $\mathbf{1}=\left(1_\mathcal{C},1_\mathcal{D}\right)$ being the unit object of $\mathcal{C}\otimes\mathcal{D}$, where $1_\mathcal{C}$ is the unit object of $\mathcal{C}$ and $1_\mathcal{D}$ is the unit object of $\mathcal{D}$.
	\details{
		\begin{lem}
			The tensor product $\mathcal{C}\otimes_{\kk}\mathcal{D}$ is a strict monoidal category.
		\end{lem}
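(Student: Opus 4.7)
The plan is to verify each axiom of a strict $\kk$-linear monoidal category for $\mathcal{C}\otimes\mathcal{D}$, observing that in every case the verification decouples into the corresponding axiom in $\mathcal{C}$ and the corresponding axiom in $\mathcal{D}$, both of which hold by hypothesis. The preceding lemma already established that $\mathcal{C}\otimes\mathcal{D}$ is a category, so what remains is to check bifunctoriality of $\otimes$, strict associativity, the strict unit axioms, and $\kk$-linearity of composition and of $\otimes$.

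First I would check that the bifunctor $\otimes\colon (\mathcal{C}\otimes\mathcal{D})\times(\mathcal{C}\otimes\mathcal{D})\to \mathcal{C}\otimes\mathcal{D}$ is well-defined. On objects the rule is coordinatewise and there is nothing to verify. On morphisms the rule $(f_\mathcal{C}\otimes f_\mathcal{D},\, g_\mathcal{C}\otimes g_\mathcal{D})\mapsto (f_\mathcal{C}\otimes g_\mathcal{C})\otimes (f_\mathcal{D}\otimes g_\mathcal{D})$ is given only on simple tensors; the bilinear extension to all of $\Hom_{\mathcal{C}\otimes\mathcal{D}}$ is well-defined because each $\Hom$ space in $\mathcal{C}\otimes\mathcal{D}$ is by definition a tensor product of $\kk$-modules, so the universal property of the tensor product applies. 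Functoriality (the interchange law) is then verified on simple tensors by applying the interchange laws in $\mathcal{C}$ and $\mathcal{D}$ componentwise, and extended to arbitrary morphisms by bilinearity.

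Next I would verify strict associativity and the strict unit axioms. Both are entirely coordinatewise: associativity on objects unfolds, via the coordinatewise definition of $\otimes$, to a pair of strict associativities in $\mathcal{C}$ and in $\mathcal{D}$, and the same holds on simple-tensor morphisms before extending by bilinearity. The unit object $\mathbf{1}=(1_\mathcal{C},1_\mathcal{D})$ satisfies $\mathbf{1}\otimes(A_\mathcal{C},A_\mathcal{D}) = (A_\mathcal{C},A_\mathcal{D}) = (A_\mathcal{C},A_\mathcal{D})\otimes \mathbf{1}$ and the analogous identities on morphisms directly from the strict unit axioms in $\mathcal{C}$ and $\mathcal{D}$. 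Finally, the $\kk$-linearity of composition and of $\otimes$ on morphisms follows from the prescribed action $a\cdot (f_\mathcal{C}\otimes f_\mathcal{D}) = (a f_\mathcal{C})\otimes f_\mathcal{D} = f_\mathcal{C}\otimes (a f_\mathcal{D})$ combined with $\kk$-linearity in each factor. There is no real obstacle here; the only mild subtlety is the routine check that each prescription given on simple tensors extends consistently by bilinearity, which is handled uniformly and once for all by the universal property of tensor products of $\kk$-modules.
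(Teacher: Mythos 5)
Your proof is correct and follows essentially the same route as the paper: a coordinatewise verification in which every axiom for $\mathcal{C}\otimes\mathcal{D}$ decouples into the corresponding axioms in $\mathcal{C}$ and in $\mathcal{D}$. You are in fact somewhat more careful than the paper's own argument, which only records the object-level checks (associativity of $\otimes$ on objects and the unit axioms) and leaves the morphism-level points — well-definedness of the bilinear extension via the universal property of the tensor product of $\kk$-modules, and the interchange law — implicit.
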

		\begin{proof}
			For all $A=(A_{\mathcal{C}},A_{\mathcal{D}}),B=(B_\mathcal{C},B_\mathcal{D}),C=(C_\mathcal{C},C_\mathcal{D})\in \mathcal{C}\otimes_{\kk}\mathcal{D}$, we have			
			\begin{itemize}
				\item ($\textit{Associativity of bifunctor}$)
				\begin{align*}	
				(A\otimes B)\otimes C&=((A_{\mathcal{C}},A_{\mathcal{D}})\otimes (B_\mathcal{C},B_\mathcal{D}))\otimes (C_\mathcal{C},C_\mathcal{D})=(A_{\mathcal{C}}\otimes B_\mathcal{C} ,A_{\mathcal{D}}\otimes B_\mathcal{D})\otimes(C_\mathcal{C},C_\mathcal{D})\\&=((A_{\mathcal{C}}\otimes B_\mathcal{C})\otimes C_\mathcal{C},(A_{\mathcal{D}}\otimes B_\mathcal{D})\otimes C_\mathcal{D}) =(A_{\mathcal{C}}\otimes B_\mathcal{C}\otimes C_\mathcal{C},A_{\mathcal{D}}\otimes B_\mathcal{D}\otimes C_\mathcal{D})\\&=(A_{\mathcal{C}}\otimes(B_\mathcal{C}\otimes C_\mathcal{C}),A_{\mathcal{D}}\otimes (B_\mathcal{D}\otimes C_\mathcal{D}))=(A_\mathcal{C},A_{\mathcal{D}})\otimes(B_\mathcal{C}\otimes C_\mathcal{C},B_\mathcal{D}\otimes C_\mathcal{D})\\&=(A_\mathcal{C},A_{\mathcal{D}})\otimes((B_\mathcal{C},B_{\mathcal{D}})\otimes(C_\mathcal{C},C_{\mathcal{D}}))=A\otimes (B\otimes C), \quad \text{and}
				\end{align*}
				\item ($\textit{Existence of unit object}$)
				$\mathbf{1}\otimes A=(1_\mathcal{C},1_\mathcal{D})\otimes(A_{\mathcal{C}},A_{\mathcal{D}})=(1_\mathcal{C}\otimes A_\mathcal{C},1_{\mathcal{D}}\otimes A_\mathcal{D})=(A_\mathcal{C}, A_\mathcal{D})=A,$ and\\	
				$A\otimes \mathbf{1}=(A_{\mathcal{C}},A_{\mathcal{D}})\otimes (1_\mathcal{C},1_\mathcal{D})=(A_\mathcal{C}\otimes 1_\mathcal{C},A_\mathcal{D}\otimes 1_{\mathcal{D}})=(A_\mathcal{C},A_\mathcal{D})=A$.
			\end{itemize}
			
			Therefore, $\mathcal{C}\otimes_{\kk}\mathcal{D}$ is a strict monoidal category.
		\end{proof}	
		\begin{lem}
			The tensor product $\mathcal{C}\otimes_{\kk}\mathcal{D}$ is a strict $\kk$-linear monoidal category.
		\end{lem}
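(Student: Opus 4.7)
The plan is to verify the three axioms that remain after the previous details-lemma (which showed $\mathcal{C}\otimes_{\kk}\mathcal{D}$ is a strict monoidal category): namely, that each hom-space is a $\kk$-module, that composition is $\kk$-bilinear, and that the tensor bifunctor is $\kk$-bilinear on morphisms. Since all morphisms in $\mathcal{C}\otimes_{\kk}\mathcal{D}$ are $\kk$-linear combinations of simple tensors, it will suffice in each case to check the axiom on simple tensors and then extend by linearity.

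For the first axiom, I would note that by definition $\Hom_{\mathcal{C}\otimes\mathcal{D}}((W,X),(Y,Z))=\Hom_\mathcal{C}(W,Y)\otimes_{\kk}\Hom_\mathcal{D}(X,Z)$ is the $\kk$-module tensor product of two $\kk$-modules (using that $\mathcal{C}$ and $\mathcal{D}$ are $\kk$-linear), so it is itself a $\kk$-module. The $\kk$-action on a simple tensor $f_\mathcal{C}\otimes f_\mathcal{D}$ is exactly $(a\cdot f_\mathcal{C})\otimes f_\mathcal{D}=f_\mathcal{C}\otimes(a\cdot f_\mathcal{D})$, which is well-defined precisely because we work with the $\kk$-module tensor product, and this matches the $\kk$-action imposed on $\mathcal{C}\otimes_\kk\mathcal{D}$.

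For the second axiom, given simple tensors $f=f_\mathcal{C}\otimes f_\mathcal{D}$ and $g=g_\mathcal{C}\otimes g_\mathcal{D}$ and a scalar $a\in\kk$, I would compute
\[
(a\cdot f)\circ g=((a\cdot f_\mathcal{C})\circ g_\mathcal{C})\otimes(f_\mathcal{D}\circ g_\mathcal{D})=a\cdot((f_\mathcal{C}\circ g_\mathcal{C})\otimes(f_\mathcal{D}\circ g_\mathcal{D}))=a\cdot(f\circ g),
\]
using the $\kk$-bilinearity of composition in $\mathcal{C}$, and similarly for the right factor. Additivity in each argument follows by the same reduction to $\mathcal{C}$ and $\mathcal{D}$. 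The third axiom, $\kk$-bilinearity of the bifunctor $\otimes$, is verified identically, replacing composition by the monoidal products of $\mathcal{C}$ and $\mathcal{D}$, both of which are $\kk$-bilinear by assumption.

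There is no real obstacle here; the proof is entirely formal, and every step reduces to a property that already holds in the strict $\kk$-linear monoidal categories $\mathcal{C}$ and $\mathcal{D}$. The only subtlety worth flagging explicitly is to confirm that the $\kk$-action on simple tensors given in the definition is consistent, i.e.\ that $(a\cdot f_\mathcal{C})\otimes f_\mathcal{D}=f_\mathcal{C}\otimes(a\cdot f_\mathcal{D})$, but this is precisely the defining relation of the $\kk$-module tensor product used to define hom-spaces, so no separate verification is needed.
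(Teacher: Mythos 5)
Your proof is correct and follows essentially the same route as the paper's own argument, which simply observes that the $\kk$-module structure on hom-spaces and the $\kk$-linearity of composition in $\mathcal{C}\otimes_{\kk}\mathcal{D}$ are inherited from the corresponding properties of $\mathcal{C}$ and $\mathcal{D}$; you merely spell out the verification on simple tensors and the extension by linearity in more detail (and additionally check bilinearity of the tensor bifunctor, which the paper leaves implicit).
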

		\begin{proof}
			Since both $Mor(\mathcal{C})$ and $Mor(\mathcal{D})$ are $\kk$-modules and composition of morphisms in both $Mor(\mathcal{C})$ and $Mor(\mathcal{D})$ is $\kk$-linear, $Mor(\mathcal{C}\otimes_{\kk}\mathcal{D})$ is a $\kk$-module, and composition of morphisms in $Mor(\mathcal{C}\otimes_\kk \mathcal{D})$ is $\kk$-linear. Therefore, $\mathcal{C}\otimes_{\kk}\mathcal{D}$ is a strict $\kk$-linear monoidal category.
		\end{proof}
		This completes the proof.
	}

	\begin{defin}\label{def:category-splits}
		Suppose $\mathcal{C}$ is a strict $\kk$-linear monoidal category and $X=X_1\cup\dotsb\cup X_n,$ $X_i\subseteq Ob\left(\mathcal{C}\right),$ is a subclass of the objects of $\mathcal{C}$ such that the objects of $\mathcal{C}$ are generated by the objects in $X$. Furthermore, suppose $\mathcal{C}_i$ is a subcategory of $\mathcal{C}$ that is monoidal with respect to the restriction of the tensor product from $\mathcal{C},$ and such that $Ob\left(\mathcal{C}_i\right)=X_i$. We define a functor $F:\mathcal{C}_1\otimes\dotsb\otimes \mathcal{C}_n\to\mathcal{C}$ as follows. On objects, we define
		\[F\left(c_1,\dotsc, c_n\right)=c_1\otimes\dotsb\otimes c_n,\]
		and on morphisms that are simple tensors, we define 
		\[F\left(f_1\otimes\dotsb\otimes f_n\right)=f_1\otimes\dotsb\otimes f_n,\]
		and extend $F$ by linearity.
	\end{defin}
	\details{
		\begin{lem}
			The map $F$ of \cref{def:category-splits} is a functor.
		\end{lem}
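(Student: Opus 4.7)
The plan is to verify the three axioms defining a functor: well-definedness on morphisms, preservation of identities, and preservation of composition. Well-definedness on objects is immediate, since any tuple $(c_1,\dotsc,c_n)\in X_1\times\dotsb\times X_n$ can be tensored in $\mathcal{C}$ (the $\mathcal{C}_i$ being subcategories of $\mathcal{C}$, their objects live in $\mathcal{C}$ and can be combined using the tensor product of $\mathcal{C}$).

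For well-definedness on morphisms, the Hom spaces in $\mathcal{C}_1\otimes\dotsb\otimes\mathcal{C}_n$ are by definition tensor products of $\kk$-modules, so every morphism is a finite $\kk$-linear combination of simple tensors. I would invoke the universal property of the tensor product of $\kk$-modules: the assignment $(f_1,\dotsc,f_n)\mapsto f_1\otimes\dotsb\otimes f_n$ (where the right-hand side uses the tensor structure of $\mathcal{C}$) is $\kk$-multilinear, because the tensor bifunctor of $\mathcal{C}$ is $\kk$-linear in each slot. This produces a well-defined $\kk$-linear map out of the tensor product of Hom spaces which agrees with the definition of $F$ on simple tensors.

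Preservation of identities is immediate from the strict monoidal structure of $\mathcal{C}$: the identity morphism in $\mathcal{C}_1\otimes\dotsb\otimes\mathcal{C}_n$ on $(c_1,\dotsc,c_n)$ is $\id_{c_1}\otimes\dotsb\otimes\id_{c_n}$, which $F$ sends to $\id_{c_1}\otimes\dotsb\otimes\id_{c_n} = \id_{c_1\otimes\dotsb\otimes c_n} = \id_{F(c_1,\dotsc,c_n)}$. For preservation of composition, I would first check the case of simple tensors $f=f_1\otimes\dotsb\otimes f_n$ and $g=g_1\otimes\dotsb\otimes g_n$: by the definition of composition in the tensor product category, $F(f\circ g)=(f_1\circ g_1)\otimes\dotsb\otimes(f_n\circ g_n)$, while $F(f)\circ F(g)=(f_1\otimes\dotsb\otimes f_n)\circ(g_1\otimes\dotsb\otimes g_n)$; these agree by repeated application of the interchange law in the strict monoidal category $\mathcal{C}$. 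The general case follows by the bilinearity of composition in $\mathcal{C}$ combined with the multilinearity observed in the well-definedness step.

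The only step requiring care is the well-definedness on morphisms, since the Hom spaces are themselves defined as tensor products of $\kk$-modules; once the universal property is invoked, the remaining verifications reduce to the interchange law and strictness of $\mathcal{C}$, both of which hold essentially by definition. The proof is therefore routine, but I would be explicit about the multilinearity check, because it is the place where the assumption that $\mathcal{C}$ is $\kk$-linear monoidal (not merely monoidal) enters.
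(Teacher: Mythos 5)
Your proposal is correct and follows essentially the same route as the paper's own argument: verify identity preservation and composition preservation on simple tensors (via strictness and the interchange law in $\mathcal{C}$) and extend by $\kk$-linearity. The one place you go beyond the paper is in explicitly justifying well-definedness on morphisms via the universal property of the tensor product of $\kk$-modules — a point the paper's proof glosses over by simply "extending by linearity" — and your treatment of it is accurate.
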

		\begin{proof}
			\begin{itemize}
				\item \textit{Maps identity to identity:} Let $(c_1,\dotsc, c_k)\in \mathcal{C}_1\otimes\dotsb\otimes \mathcal{C}_k$. We have 
				\[F(1_{c_1}\otimes\dotsb\otimes 1_{c_k})=1_{c_1}\otimes\dotsb\otimes 1_{c_k}.\]
				\item \textit{Preserves composition of morphisms:} For all simple tensors $f_1\otimes\dotsb\otimes f_k:(b_1,\dotsb,b_k)\to(c_1,\dotsc,c_k)$ and $g_1\otimes\dotsb\otimes g_k:(c_1,\dotsc,c_k)\to(d_1,\dotsc,d_k)$ in $Mor(\mathcal{C}_1\otimes\dotsb\otimes \mathcal{C}_k)$, we have
				\begin{align*}
				&F((g_1\otimes\dotsb\otimes g_k)\circ(f_1\otimes\dotsb\otimes f_k))=G((g_1\circ f_1)\otimes\dotsb\otimes(g_k\circ f_k))=(g_1\circ f_1)\otimes\dotsb\otimes(g_k\circ f_k)\\&=(g_1\otimes\dotsb\otimes g_k)\circ(f_1\otimes\dotsb\otimes f_k)=F(g_1\otimes\dotsb\otimes g_k)\circ F(f_1\otimes\dotsb\otimes f_k).
				\end{align*}
				Using the $\kk$-linearity of $F$, we see that $F$ preserves the composition of all morphisms in $Mor(\mathcal{C}_1\otimes\dotsb\otimes \mathcal{C}_k)$.
			\end{itemize}
		\end{proof}
		This completes the proof.
	}
	\begin{theo}\label{theo:equivalence}
		Suppose $\mathcal{C}$ is a strict $\kk$-linear monoidal category and $X=X_1\cup\dotsb\cup X_n$, $X_i\subseteq Ob\left(\mathcal{C}\right),$ is a subclass of the objects of $\mathcal{C}$ such that the objects of $\mathcal{C}$ are generated by the objects in $X,$ and such that $x_i\otimes x_j$ is isomorphic to $x_j\otimes x_i$ whenever $x_i\in X_i$ and $x_j\in X_j$ for $i\neq j$. Suppose $\mathcal{C}_i$ is a subcategory of $\mathcal{C}$ that is monoidal with respect to the restriction of the tensor product from $\mathcal{C},$ and such that $Ob\left(\mathcal{C}_i\right)=X_i,$ and such that the induced functor $F$ from \cref{def:category-splits} is full on $\mathcal{C}_1\otimes\dotsb\otimes \mathcal{C}_n$. Then $\mathcal{C}$ is equivalent to $\mathcal{C}_1\otimes\dotsb\otimes \mathcal{C}_n$.
	\end{theo}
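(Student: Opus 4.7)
The plan is to show that the induced functor $F\colon \mathcal{C}_1\otimes\dotsb\otimes\mathcal{C}_n\to\mathcal{C}$ of \cref{def:category-splits} is essentially surjective and fully faithful, hence an equivalence of $\kk$-linear categories. Fullness is supplied by hypothesis, so the remaining work is essential surjectivity and faithfulness.

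For essential surjectivity, let $c\in Ob(\mathcal{C})$. Since the objects of $\mathcal{C}$ are generated under tensor product by $X=X_1\cup\dotsb\cup X_n$, one can write $c=y_1\otimes\dotsb\otimes y_m$ with each $y_j\in X_{i_j}$ for some $i_j\in\{1,\dotsc,n\}$. The hypothesis $x_i\otimes x_j\cong x_j\otimes x_i$ for $x_i\in X_i$, $x_j\in X_j$, $i\ne j$ lets us commute adjacent out-of-order factors; iterating this in bubble-sort fashion yields an isomorphism $c\cong c_1\otimes\dotsb\otimes c_n$, where $c_i$ is the tensor product of those $y_j$ belonging to $X_i$, in the order they originally appeared. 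Because $\mathcal{C}_i$ is a monoidal subcategory of $\mathcal{C}$ whose object class is exactly $X_i$, each $c_i$ lies in $Ob(\mathcal{C}_i)$, and hence $c\cong F((c_1,\dotsc,c_n))$, as required.

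The main obstacle is faithfulness: one must show that the $\kk$-linear map
\[
\bigotimes_{i=1}^{n}\Hom_{\mathcal{C}_i}(W_i,Y_i)\longrightarrow \Hom_{\mathcal{C}}(W_1\otimes\dotsb\otimes W_n,\,Y_1\otimes\dotsb\otimes Y_n),\qquad f_1\otimes\dotsb\otimes f_n\longmapsto f_1\otimes\dotsb\otimes f_n,
\]
is injective (its surjectivity is precisely the fullness hypothesis). The natural strategy is to construct a quasi-inverse $G\colon\mathcal{C}\to\mathcal{C}_1\otimes\dotsb\otimes\mathcal{C}_n$ using the essential surjectivity above: for each $c\in Ob(\mathcal{C})$ fix an isomorphism $\eta_c\colon F(G(c))\xrightarrow{\sim} c$ with $G(c)=(c_1,\dotsc,c_n)$, and for $\phi\colon c\to c'$ in $\mathcal{C}$ lift $\eta_{c'}^{-1}\circ\phi\circ\eta_c$ through $F$ using fullness to define $G(\phi)$. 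Then $G$ is well-defined as a functor and satisfies $F\circ G\cong \id_\mathcal{C}$ and $G\circ F\cong \id$ precisely when $F$ is faithful, so one must verify injectivity directly. In the intended applications (notably to $\Heis_{F,k}$ in \cref{section:equivalence}), the relevant morphism spaces possess explicit diagrammatic bases that split along colors, providing the required linear independence; in the abstract statement the hypotheses may need to be supplemented (or the categories restricted) to ensure this splitting, and this is the step where the most care is required.
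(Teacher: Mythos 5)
Your treatment of essential surjectivity (bubble-sorting the tensor factors using the isomorphisms $x_i\otimes x_j\cong x_j\otimes x_i$) and of fullness (by hypothesis) matches the paper's argument. The gap is that you never actually establish faithfulness: you correctly reduce it to the injectivity of the canonical $\kk$-linear map
\[
\Hom_{\mathcal{C}_1}(W_1,Y_1)\otimes_{\kk}\dotsb\otimes_{\kk}\Hom_{\mathcal{C}_n}(W_n,Y_n)\longrightarrow \Hom_{\mathcal{C}}\bigl(W_1\otimes\dotsb\otimes W_n,\,Y_1\otimes\dotsb\otimes Y_n\bigr),
\]
but then leave that injectivity as an open problem, remarking that the hypotheses ``may need to be supplemented.'' A proof cannot end there; as written, the conclusion of the theorem is not derived. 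Note also that your quasi-inverse construction is circular as a route to faithfulness: building $G$ with $G\circ F\cong\id$ requires the very injectivity you are trying to prove, so it cannot substitute for a direct argument.

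For comparison, the paper disposes of faithfulness in one line: since $F$ acts as the identity on morphisms ($F(f)=f$), $F(f)=F(g)$ immediately gives $f=g$. This works because the paper identifies a simple tensor $f_1\otimes\dotsb\otimes f_n$ in the tensor-product category with the corresponding morphism of $\mathcal{C}$, i.e.\ it implicitly treats the displayed map as an identification rather than as a possibly non-injective comparison map. Your instinct that this is the delicate point is sound --- in the application to $\Heis_{F,k}$ the injectivity is what the color-split diagrammatic spanning sets actually deliver --- but flagging the difficulty is not the same as resolving it. To repair your write-up you would either adopt the paper's convention (stating explicitly that morphisms of $\mathcal{C}_1\otimes\dotsb\otimes\mathcal{C}_n$ are, by construction, morphisms of $\mathcal{C}$, so $F$ is injective on hom-spaces by definition) or add the injectivity of the comparison map as an explicit hypothesis and verify it in the application.
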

	\begin{proof}
		We claim that the functor $F$ from \cref{def:category-splits} induces an equivalence of categories between $\mathcal{C}$ and $\mathcal{C}_1\otimes\dotsb\otimes \mathcal{C}_n$.\\
		\indent $F$ \textit{is essentially surjective on objects}. Since $x_i\otimes x_j$ and $x_j\otimes x_i$ are isomorphic whenever $x_i\in X_i$ and $x_j\in X_j$ for $i\neq j$, we can construct an invertible morphism from an object in $\mathcal{C}$ to an object in $\im F$ by composing the invertible morphisms between the objects $x_i\otimes x_j$ and $x_j\otimes x_i$ when $i\neq j$. This morphism acts on a tensor in $Ob(\mathcal{C})$ by shifting its components one by one so that all objects $x_i\in X_i$ are to the left of all objects $x_j\in X_j$ for $i<j$. After performing this procedure on a tensor in $Ob(\mathcal{C})$, we see that the resulting object is in $\im F.$\\
		\indent $F$ \textit{is full and faithful on morphisms}. $F$ is full by assumption. Moreover, if $F(g)=F(f)$, then by the definition of $F$, we have $f=g$, since $F\left(g\right)=g$ and $F\left(f\right)=f$. Therefore, $F$ is full and faithful.
	\end{proof}

	Now, suppose $\mathcal{D}$ is a strict $\kk$-linear monoidal category. We are interested in a particular subcategory of the Karoubi envelope, $Kar\left(\mathcal{D}\right)$, which we call the partial Karoubi envelope of $\mathcal{D}$ and denote $PK\left(\mathcal{D}\right)$. 

\begin{defin}\label{def:Karoubi-envelope}
	Suppose $\mathcal{D}$ is a strict $\kk$-linear monoidal category. Denote the domain and codomain of a morphism $f$ in $\mathcal{D}$ by $dom(f)$ and $codom(f),$ respectively, and let $\mathcal{I}$ be a subcollection of idempotent morphisms in $\mathcal{D}$ satisfying the following conditions:
	\begin{itemize} 
		\item For all $e\in\mathcal{I},$ $dom(e)=codom(e)$,
		\item If $D=dom(e)$ for some $e\in\mathcal{I},$ then $\id_D\circ e'= \id_D = e'\circ \id_D$ for all $e'\in\mathcal{I}$ such that $D=dom(e').$
	\end{itemize} 
	Then the $\textit{partial Karoubi envelope}$ of $\mathcal{D}$ with respect to $\mathcal{I}$, denoted $PK(\mathcal{D})$ (or $PK(\mathcal{D})_\mathcal{I}$, if we wish to specify our choice $\mathcal{I}$), is defined as follows:
	
	\begin{itemize}
		\item \
		$Ob\left(PK\left(\mathcal{D}\right)\right)=\{\left(D_1\otimes\dotsb\otimes D_m, e_{1}\otimes\dotsb\otimes e_{m}\right)| e_i\in \mathcal{I}, D_i=dom(e_i), i=1,\dotsc,m, m\geq 0\}$, where for $m=0$, we define $\left(D_1\otimes\dotsb\otimes D_m, e_{1}\otimes\dotsb\otimes e_{m}\right):=\left(\mathbf{1},id_{\mathbb{1}}\right)$, where $\mathbf{1}$ is the unit object of $\mathcal{D}$ and $id_{\mathbf{1}}$ is the identity morphism on $\mathbf{1}$ in $\mathcal{D}$,
		\item \ $Hom_{PK(\mathcal{D})}\left(\left(D_1,e_1\right),\left(D_2,e_2\right)\right)=\{g\in Hom_{\mathcal{D}}\left(D_1,D_2\right)|g\circ e_1=g=e_2\circ g\},$
		\item \
		For all morphisms $g':\left(D_1,e_1\right)\to\left(D_2,e_2\right)$ and $h': \left(D_2,e_2\right)\to\left(D_3,e_3\right)$ in
		
		\noindent $PK\left(\mathcal{D}\right)$, there are morphisms $g:D_1\to D_2$ and $h: D_2\to D_3$ in $\mathcal{D}$ that equal $g'$ and $h'$, respectively. The composition $h'\circ g':\left(D_1,e_1\right)\to \left(D_3,e_3\right)$ in $PK\left(\mathcal{D}\right)$ is then defined as the composition $h\circ g:D_1\to D_3$ in $\mathcal{D}$. Indeed, we have that
		\[\left(h\circ g\right)\circ e_1 = h\circ\left(g\circ e_1\right) =h\circ g=\left(e_3\circ h\right)\circ g=e_3\circ \left(h\circ g\right),\] and
		\item \
		Equip $PK\left(\mathcal{D}\right)$ with a bifunctor $\otimes: \mathcal{D} \times \mathcal{D} \to \mathcal{D}$ sending pairs of objects $\left(\left(D_1,e_1\right),\left(D_2,e_2\right)\right)\mapsto \left(D_1\otimes D_2, e_1 \otimes e_2\right)$ and pairs of morphisms $(h,g)\mapsto h\otimes g.$
	\end{itemize}
\end{defin}
The partial Karoubi envelope, $PK\left(\mathcal{D}\right)$, is a strict monoidal subcategory of the Karoubi envelope $Kar\left(\mathcal{D}\right)$, with the unit object of $PK\left(\mathcal{D}\right)$ being $\left(\mathbf{1},id_{\mathbf{1}}\right)$, where $\mathbf{1}$ is the unit object of $\mathcal{D}$, and $\id_{\mathbf{1}}$ is the identity morphism of $\mathcal{D}$. 

We define a $\kk$-action in $PK\left(\mathcal{D}\right)$ to turn it into a strict $\kk$-linear monoidal category, and we call resulting category $PK\left(\mathcal{D}\right)$ (i.e.: we do not change the name). Let $a\in\kk$, and  $g':\left(D_1,e_1\right)\to\left(D_2,e_2\right)\in Mor\left(\mathcal{D}\right)$. Then there is a morphism $g:D_1\to D_2\in Mor\left(\mathcal{D}\right)$ equalling $g$, with the property $g\circ e_1=g=e_2\circ g$. We define $\kk$- action as follows:

\begin{itemize}
	\item $a\cdot g':\left(D_1,e_1\right)\to\left(D_2,e_2\right)\in Mor\left(\mathcal{D}\right)$ equals the morphism $a\cdot g:D_1\to D_2\in Mor\left(\mathcal{D}\right)$, so that 
	\[\left(a\cdot g\right)\circ e_1=a\cdot\left(g\circ e_1\right)=a\cdot g=a\cdot\left(e_2\circ g\right)=\left(a\cdot e_2\right)\circ g=\left(e_2 \cdot a\right)\circ g=e_2\circ\left(a\cdot g\right).\]
\end{itemize}
\details{
	\begin{lem}
		The partial Karoubi envelope, $PK(Heis_{F,k})$, is a strict monoidal subcategory of $Kar(Heis_{F,k}).$
	\end{lem}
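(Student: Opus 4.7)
The plan is a direct verification from the definitions; all the nontrivial structural content of $\Heis_{F,k}$ is already encoded in $\Kar(\Heis_{F,k})$, so the task reduces to checking closure under the relevant operations. First, I would confirm that $PK(\Heis_{F,k})$ satisfies the axioms of a category. Composition, defined via lifting to $\Heis_{F,k}$, is associative by inheritance. For each object $(D,e) \in \mathrm{Ob}(PK(\Heis_{F,k}))$, the morphism $e : D \to D$ serves as the identity: it lies in the hom-set $\Hom_{PK(\Heis_{F,k})}((D,e),(D,e))$ because $e \circ e = e$ (idempotence), and for any $g : (D_1,e_1) \to (D_2,e_2)$ in $PK(\Heis_{F,k})$ we have $g \circ e_1 = g$ and $e_2 \circ g = g$ by the defining constraints on hom-sets in \cref{def:Karoubi-envelope}.

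Next, I would exhibit $PK(\Heis_{F,k})$ as a subcategory of $\Kar(\Heis_{F,k})$. Recall that objects of $\Kar(\Heis_{F,k})$ are pairs $(D,e)$ with $e$ an idempotent endomorphism of $D$. The objects of $PK(\Heis_{F,k})$ are pairs $(D_1 \otimes \dotsb \otimes D_m,\, e_1 \otimes \dotsb \otimes e_m)$ with each $e_i \in \mathcal{I}$ an idempotent on $D_i$. By the interchange law in the strict monoidal category $\Heis_{F,k}$,
\[
(e_1 \otimes \dotsb \otimes e_m) \circ (e_1 \otimes \dotsb \otimes e_m) = (e_1 \circ e_1) \otimes \dotsb \otimes (e_m \circ e_m) = e_1 \otimes \dotsb \otimes e_m,
\]
so each such pair is indeed an object of $\Kar(\Heis_{F,k})$. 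The hom-sets are defined by the same equations in both categories, so the inclusion is full and faithful.

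Finally, I would verify that the tensor product restricts. On objects, $(D_1,e_1) \otimes (D_2,e_2) := (D_1 \otimes D_2,\, e_1 \otimes e_2)$ remains of the required form, since a tensor product of two tensor products of elements of $\mathcal{I}$ is again of that form. On morphisms $g : (D_1,e_1) \to (D_2,e_2)$ and $h : (D_3,e_3) \to (D_4,e_4)$, the interchange law gives $(g \otimes h) \circ (e_1 \otimes e_3) = (g \circ e_1) \otimes (h \circ e_3) = g \otimes h$, and analogously on the other side, so $g \otimes h$ lies in the appropriate hom-set in $PK(\Heis_{F,k})$. The unit object $(\mathbf{1}, \id_{\mathbf{1}})$ and the strict associativity/unit laws are inherited directly from $\Heis_{F,k}$.

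There is no serious obstacle; the proof is essentially bookkeeping once the definitions are unpacked. The one point worth flagging is that the identity morphism on $(D,e)$ in $PK(\Heis_{F,k})$ is $e$ itself rather than $\id_D$, a standard feature of Karoubi-style constructions enforced precisely by the equations $g \circ e_1 = g = e_2 \circ g$ on hom-sets; this is what makes the verification of the identity axiom genuinely require the idempotence of $e$ rather than being automatic.
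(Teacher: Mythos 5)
Your proof is correct and follows essentially the same route as the paper's: associativity of composition is inherited from $\Heis_{F,k}$, the identity on $(D,e)$ is $e$ itself (verified via idempotence and the defining constraints on hom-sets), and the monoidal structure with unit $(\mathbf{1},\id_{\mathbf{1}})$ restricts. Your additional checks — that $e_1\otimes\dotsb\otimes e_m$ is idempotent by the interchange law and that $g\otimes h$ lands in the correct hom-set — are worthwhile details the paper leaves implicit, but they do not change the argument.
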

	\begin{proof}
		For all $(Q_{\epsilon_1},e_{\gamma_1}),(Q_{\epsilon_2},e_{\gamma_2}),(Q_{\epsilon_3},e_{\gamma_3})\in PK(Heis_{F,k})$, we have			
		\begin{itemize}
			\item ($\textit{Composition of morphisms is associative}$)
			\
			This follows from the fact that composition of morphisms in $Mor(Heis_{F,k})$ is associative. 
			\item ($\textit{Existence of identity morphisms}$) 
			\
			Let $(Q_{\epsilon},e_{\gamma})\in Ob(PK(Heis_{F,k}))$. We claim $1_{(Q_{\epsilon},e_{\gamma})}=e_\gamma$. Indeed, we have,	
			\[e_\gamma \circ e_{\gamma} = e_{\gamma} , \]
			and for all morphisms $f':(Q_{\epsilon},e_{\gamma})\to (Q_{\nu},e_{\eta})$ and $g':(Q_{\mu},e_{\omega})\to (Q_{\epsilon},e_{\gamma})$ in $Mor(PK(Heis_{F,k}))$, we have $f'=f:Q_{\epsilon}\to Q_{\nu}$ and $g'=g:Q_\mu\to Q_\epsilon$ for some morphism in $Mor(Heis_{F_1,k}\otimes\dotsb\otimes Heis_{F_k,k})$, so
			\[e_\gamma \circ g'= e_\gamma \circ g = g = g',\]
			and 
			\[ f'\circ e_\gamma=  f\circ e_\gamma = f= f'.\] 
			\item ($\textit{Associativity of bifunctor}$)
			\begin{align*}		
			((Q_{\epsilon_1},e_{\gamma_1})&\otimes(Q_{\epsilon_2},e_{\gamma_2}))\otimes (Q_{\epsilon_3},e_{\gamma_3})=(Q_{\epsilon_1}\otimes Q_{\epsilon_2} ,e_{\gamma_1}\otimes e_{\gamma_2})\otimes(Q_{\epsilon_3},e_{\gamma_3})\\&=((Q_{\epsilon_1}\otimes Q_{\epsilon_2})\otimes Q_{\epsilon_3} ,(e_{\gamma_1}\otimes e_{\gamma_2})\otimes e_{\gamma_3})=(Q_{\epsilon_1}\otimes (Q_{\epsilon_2}\otimes Q_{\epsilon_3}) ,e_{\gamma_1}\otimes (e_{\gamma_2}\otimes e_{\gamma_3}))\\& =(Q_{\epsilon_1},e_{\gamma_1})\otimes(Q_{\epsilon_2}\otimes Q_{\epsilon_3},e_{\gamma_2}\otimes e_{\gamma_3})=(Q_{\epsilon_1},e_{\gamma_1})\otimes((Q_{\epsilon_2},e_{\gamma_2})\otimes(Q_{\epsilon_3},e_{\gamma_3})), \quad \text{and}
			\end{align*}
			\item ($\textit{Existence of unit object}$)
			$(\mathbf{1},id_{\mathbf{1}})\otimes(Q_{\epsilon},e_{\gamma})=(\mathbf{1}\otimes Q_{\epsilon},id_{\mathbf{1}}\otimes e_{\gamma})=(Q_{\epsilon}, e_{\gamma}), \quad$ and\\	
			$(Q_{\epsilon},e_{\gamma})\otimes(\mathbf{1},id_{\mathbf{1}})=(Q_{\epsilon}\otimes\mathbf{1},e_{\gamma}\otimes id_{\mathbf{1}})=(Q_{\epsilon}, e_{\gamma}).$			
		\end{itemize}
		Therefore, $PK(Heis_{F,k})$ is a strict monoidal category.
	\end{proof}
	\begin{lem}
		The partial Karoubi envelope, $PK(Heis_{F,k})$, is a strict $\kk$-linear monoidal subcategory of $Kar(Heis_{F,k}).$
	\end{lem}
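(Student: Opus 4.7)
The plan is to observe that $PK(\Heis_{F,k})$ is constructed as a subcategory of $\Heis_{F,k}$ via the identification $\Hom_{PK(\Heis_{F,k})}((D_1,e_1),(D_2,e_2)) \subseteq \Hom_{\Heis_{F,k}}(D_1,D_2)$, so every piece of the $\kk$-linear monoidal structure on $PK(\Heis_{F,k})$ is inherited from the ambient structure on $\Heis_{F,k}$. Since the previous lemma has already established that $PK(\Heis_{F,k})$ is a strict monoidal subcategory of $\Kar(\Heis_{F,k})$, the only remaining task is to verify that the $\kk$-module structure on morphisms and the $\kk$-bilinearity of composition and the tensor product restrict correctly to $PK(\Heis_{F,k})$.

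First, I would verify closure of each hom-set under the $\kk$-action. Given $a \in \kk$ and a morphism $g \colon (D_1,e_1) \to (D_2,e_2)$ in $PK(\Heis_{F,k})$, represented by $g \in \Hom_{\Heis_{F,k}}(D_1,D_2)$ with $g\circ e_1 = g = e_2\circ g$, the $\kk$-linearity of composition in $\Heis_{F,k}$ immediately gives $(a\cdot g)\circ e_1 = a\cdot g = e_2\circ(a\cdot g)$, so $a\cdot g$ still represents a morphism in $PK(\Heis_{F,k})$; this is exactly the calculation sketched just above the lemma statement. Closure under addition is checked analogously, using bilinearity of composition in $\Heis_{F,k}$.

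Next, I would verify $\kk$-bilinearity of composition and of the tensor product of morphisms in $PK(\Heis_{F,k})$. Both operations are defined on representatives by applying the corresponding operations in $\Heis_{F,k}$, and since composition and the tensor bifunctor in $\Heis_{F,k}$ are already $\kk$-bilinear, the bilinearity transfers automatically. I do not anticipate any real obstacle here: the argument is a sequence of routine transfers from $\Heis_{F,k}$ to its subcategory, and the only thing to keep track of is that after each operation the resulting morphism continues to satisfy the defining condition $h\circ e_1 = h = e_2\circ h$, which it does precisely because all ambient operations are $\kk$-linear.
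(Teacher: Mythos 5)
Your proposal is correct and follows essentially the same route as the paper: the paper's own proof is a one-line observation that the $\kk$-module structure on $\operatorname{Mor}(\Heis_{F,k})$ and the $\kk$-linearity of composition there transfer directly to $PK(\Heis_{F,k})$, with the strict monoidal structure already handled by the preceding lemma. Your version simply spells out the closure checks (using exactly the computation $(a\cdot g)\circ e_1=a\cdot(g\circ e_1)=a\cdot g=e_2\circ(a\cdot g)$ displayed just before the lemma) that the paper leaves implicit.
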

	\begin{proof}
		Since $Mor(Heis_{F,k})$ is a $\kk$-module and composition of morphisms in $Mor(Heis_{F,k})$ is $\kk$-linear, it follows that $PK(Heis_{F,k})$ is a strict $\kk$-linear monoidal category.
	\end{proof}
	This completes the proof.
}

\begin{defin}\label{def:colored-subcategory}
	Let $\mathcal{D}$ be a strict $\kk$-linear monoidal category, and let $\mathcal{I}$ be a collection of idempotent morphisms in $\mathcal{D}$ satisfying the conditions of \cref{def:Karoubi-envelope}. We define $\mathcal{D}_\mathcal{I}$ as the subcategory of $\mathcal{D}$ that is monoidal with respect to the restriction of the tensor product of $\mathcal{D}$ and such that $Mor\left(\mathcal{D}_\mathcal{I}\right)=Mor\left(PK(\mathcal{D})_\mathcal{I}\right)$ and such that $Ob\left(\mathcal{D}_\mathcal{I}\right)$ is generated by the objects $dom(e),$ where $e\in\mathcal{I}.$
\end{defin}

	\begin{lem}\label{lem:mini-equivalence}	
	Let $\mathcal{D}$ be a strict $\kk$-linear monoidal category. Let $\mathcal{I}$ be a subcollection of idempotent morphisms in $\mathcal{D}$ satisfying the conditions of \cref{def:Karoubi-envelope}, and suppose $A_1,\dotsc,A_n$ are subcollections of elements in $\mathcal{I}$. Furthermore, for each $i,$ impose the condition that for any object $\left(D,e\right)\in Ob\left(PK\left(\mathcal{D}\right)_{A_i}\right)$, $e$ is uniquely determined by $D$.
	Then the categories $PK\left(\mathcal{D}\right)_{A_1}\otimes\dotsb\otimes PK\left(\mathcal{D}\right)_{A_n}$ and $\mathcal{D}_{A_1}\otimes\dotsb\otimes \mathcal{D}_{A_n}$ are isomorphic.
	\end{lem}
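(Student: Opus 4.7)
The plan is to prove the lemma in two stages: first establish that for each $i$ the categories $PK(\mathcal{D})_{A_i}$ and $\mathcal{D}_{A_i}$ are isomorphic as strict $\kk$-linear monoidal categories, and then tensor these isomorphisms together to obtain the desired isomorphism.

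For the first stage, I would fix an index $i$ and define a strict $\kk$-linear monoidal functor $\Phi_i \colon PK(\mathcal{D})_{A_i} \to \mathcal{D}_{A_i}$ by $\Phi_i(D_1 \otimes \dotsb \otimes D_m,\, e_1 \otimes \dotsb \otimes e_m) = D_1 \otimes \dotsb \otimes D_m$ on objects, and by $\Phi_i(g) = g$ on morphisms, which is meaningful because $\mathrm{Mor}(\mathcal{D}_{A_i}) = \mathrm{Mor}(PK(\mathcal{D})_{A_i})$ by \cref{def:colored-subcategory}. In the opposite direction, I would define $\Psi_i \colon \mathcal{D}_{A_i} \to PK(\mathcal{D})_{A_i}$ on an object $D = D_1 \otimes \dotsb \otimes D_m$ of $\mathcal{D}_{A_i}$ as follows: each $D_j$ is a generating object, hence equals $\mathrm{dom}(e_j)$ for some $e_j \in A_i$, and the hypothesis that the idempotent is uniquely determined by its domain in $PK(\mathcal{D})_{A_i}$ guarantees $e_j$ is uniquely determined by $D_j$; we then set $\Psi_i(D) = (D,\, e_1 \otimes \dotsb \otimes e_m)$, and $\Psi_i$ acts as the identity on morphisms.

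Next I would verify that $\Phi_i$ and $\Psi_i$ are well-defined strict $\kk$-linear monoidal functors: compatibility with composition, with tensor product, and with the $\kk$-action all follow immediately from the same properties holding in $\mathcal{D}$, together with the fact that the identity morphism on $(D,e)$ in $PK(\mathcal{D})_{A_i}$ is represented by the identity morphism on $D$ in $\mathcal{D}$. The uniqueness hypothesis ensures that $\Psi_i$ is well-defined on objects, and a direct check shows $\Phi_i \circ \Psi_i = \mathrm{id}_{\mathcal{D}_{A_i}}$ and $\Psi_i \circ \Phi_i = \mathrm{id}_{PK(\mathcal{D})_{A_i}}$, so $\Phi_i$ is an isomorphism of strict $\kk$-linear monoidal categories.

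For the second stage, I would combine the $\Phi_i$'s into a strict $\kk$-linear monoidal functor
\[
\Phi \colon PK(\mathcal{D})_{A_1} \otimes \dotsb \otimes PK(\mathcal{D})_{A_n} \longrightarrow \mathcal{D}_{A_1} \otimes \dotsb \otimes \mathcal{D}_{A_n}
\]
defined on objects by $\Phi(X_1, \dotsc, X_n) = (\Phi_1(X_1), \dotsc, \Phi_n(X_n))$ and on simple tensors of morphisms by $\Phi(f_1 \otimes \dotsb \otimes f_n) = \Phi_1(f_1) \otimes \dotsb \otimes \Phi_n(f_n)$, extended by $\kk$-linearity; the functor $\Psi$ is assembled analogously from the $\Psi_i$'s. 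That $\Phi$ and $\Psi$ are mutually inverse follows immediately from the same fact for each component, and the checks that they preserve composition, tensor product, $\kk$-action, and identities reduce to the corresponding statements for each $\Phi_i$ and $\Psi_i$. The main (and really only) obstacle is the bookkeeping of verifying that $\Psi_i$ is well-defined on objects, which is precisely where the uniqueness hypothesis on idempotents is essential; after that, everything reduces to routine checks inherited from the construction of tensor products of strict $\kk$-linear monoidal categories.
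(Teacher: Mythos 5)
Your proposal is correct and follows essentially the same route as the paper: construct for each $i$ a pair of mutually inverse identity-on-morphisms functors between $\mathcal{D}_{A_i}$ and $PK(\mathcal{D})_{A_i}$ (using the uniqueness of the idempotent to make the object assignment well-defined), then tensor these componentwise isomorphisms together. The only difference is cosmetic — you emphasize the well-definedness of $\Psi_i$ on objects a bit more explicitly than the paper does, which is a fair point to stress since that is where the uniqueness hypothesis is actually used.
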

	\begin{proof}
	We first show that, for $i=1,\dotsc,n,$ the categories $\mathcal{D}_{A_i}$ and $PK\left(\mathcal{D}\right)_{A_i}$ are isomorphic. 
	
	For $i=1,\dotsc,n,$ define a functor $G_i:\mathcal{D}_{A_i}\to PK\left(\mathcal{D}\right)_{A_i}$ as follows. On objects, define
	\[
	G_i\left(D\right)=\left(D,e\right),
	\]
	where $e$ is the unique morphism in $PK\left(\mathcal{D}\right)_{A_i}$ determined by $D$, and on morphisms, define
	\[
	G_i\left(f\right)=f.
	\]
	
	For $i=1,\dotsc,n,$ define another functor $H_i:PK\left(\mathcal{D}\right)_{A_i}\to \mathcal{D}_{A_i}$ as follows. On objects, define 
	\[
	H_i\left(D,e\right)=D,
	\]
	and on morphisms, define
	\[
	H_i\left(f\right)=f.
	\]
	
	Clearly, $G_i$ preserves the defining relations in $\mathcal{D}_{A_i}$: if $f, g\in Mor\left(\mathcal{D}_{A_i}\right)$ and $f=g$, then
	\[G_i(f)=f=g=G_i(g).\] 
	A similar argument shows that $H_i$ preserves the defining relations in $PK\left(\mathcal{D}\right)_{A_i}$. In addition, 
	\[H_i\circ G_i=id_{\mathcal{D}_{A_i}}, \quad \text{and} \quad G_i\circ H_i= \id_{PK\left(\mathcal{D}\right)_{A_i}},\]	
	\noindent for $i=1,\dotsc,n,$ so the categories in question are isomorphic.

	Now we show that $PK\left(\mathcal{D}\right)_{A_1}\otimes\dotsb\otimes PK\left(\mathcal{D}\right)_{A_n}$ and $\mathcal{D}_{A_1}\otimes\dotsb\otimes \mathcal{D}_{A_n}$ are isomorphic.
		
	Define a functor $G:\mathcal{D}_{A_1}\otimes\dotsb\otimes \mathcal{D}_{A_n}\to PK\left(\mathcal{D}\right)_{A_1}\otimes\dotsb\otimes PK\left(\mathcal{D}\right)_{A_n}$ as follows. On objects, define
	\[
	G\left(D_{1},\dotsc,D_{n}\right)=\left(G_1(D_1),\dotsc, G_n(D_n)\right),
	\]
	and on morphisms that are simple tensors, define
	\[
	G\left(f_1\otimes\dotsb\otimes f_n\right)=G_1(f_1)\otimes\dotsb\otimes G_n(f_n),
	\]
	and extend $G$ by linearity.
	
	Now define another functor $H:PK\left(\mathcal{D}\right)_{A_1}\otimes\dotsb\otimes PK\left(\mathcal{D}\right)_{A_n} \to \mathcal{D}_{A_1}\otimes\dotsb\otimes \mathcal{D}_{A_n}$ as follows. On objects, define 
	\[
	H\left(\left(D_{1},e_1\right),\dotsc, \left(D_{n},e_n\right)\right)=\left(H_1\left(D_{1},e_1\right),\dotsc, H_n\left(D_{n},e_n\right)\right),
	\]
	and on morphisms that are simple tensors, define
	\[
	H\left(f_1\otimes\dotsb\otimes f_n\right)=H_1(f_1)\otimes\dotsb\otimes H_n(f_n),
	\]
	and extend $H$ by linearity.
	
	As the $G_i$ and $H_i$ are isomorphisms that are inverse to each other, it follows that $G$ and $H$ are isomorphisms that are inverse to each other. Therefore, $PK\left(\mathcal{D}\right)_{A_1}\otimes\dotsb\otimes PK\left(\mathcal{D}\right)_{A_n}$ and $\mathcal{D}_{A_1}\otimes\dotsb\otimes \mathcal{D}_{A_n}$ are isomorphic.
	\end{proof}

	\begin{cor}\label{cor:equivalence2}
	Let $\mathcal{D}$ be a strict $\kk$-linear monoidal category. Let $\mathcal{I}$ be a subcollection of the idempotent morphisms in $\mathcal{D}$ satisfying the conditions of \cref{def:Karoubi-envelope}, and suppose $A_1,\dotsc,A_n$ are subcollections of elements in $\mathcal{I}$. Furthermore, for all $i,j=1,\dotsc,n,$ suppose the following conditions are satisfied:
	\begin{itemize}
	\item All objects in $PK\left(\mathcal{D}\right)$ are generated by the objects in $Ob\left(PK\left(\mathcal{D}\right)_{A_1}\right)\cup\dotsb\cup Ob\left(PK\left(\mathcal{D}\right)_{A_n}\right),$
	\item  $x_i\otimes x_j$ is isomorphic to $x_j\otimes x_i$ in $PK\left(\mathcal{D}\right)$ whenever $x_i\in Ob\left(PK\left(\mathcal{D}\right)_{A_i}\right)$ and $x_j\in Ob\left(PK\left(\mathcal{D}\right)_{A_j}\right)$ for $i\neq j$,
	\item The induced functor $F:PK\left(\mathcal{D}\right)_{A_1}\otimes\dotsb\otimes PK\left(\mathcal{D}\right)_{A_n}\to PK\left(\mathcal{D}\right)$ from \cref{def:category-splits} is full on morphisms.
	\end{itemize}	
	Then $PK\left(\mathcal{D}\right)$ is equivalent to $PK\left(\mathcal{D}\right)_{A_1}\otimes\dotsb\otimes PK\left(\mathcal{D}\right)_{A_n}$. 
	
	In addition to all the conditions above, if for any object $\left(D,e\right)\in Ob\left(PK\left(\mathcal{D}\right)_{A_i}\right)$, $e$ is uniquely determined by $D$, then $PK\left(\mathcal{D}\right)$ is equivalent to $\mathcal{D}_{A_1}\otimes\dotsb\otimes \mathcal{D}_{A_n}.$
	\end{cor}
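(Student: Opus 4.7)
The plan is to obtain both claims by assembling the machinery already in place, namely Theorem \ref{theo:equivalence} together with Lemma \ref{lem:mini-equivalence}. For the first claim, I would instantiate Theorem \ref{theo:equivalence} with $\mathcal{C} = PK\left(\mathcal{D}\right)$, $\mathcal{C}_i = PK\left(\mathcal{D}\right)_{A_i}$, and $X_i = Ob\left(PK\left(\mathcal{D}\right)_{A_i}\right)$. The essential observation is that since $A_i \subseteq \mathcal{I}$, every object and every morphism of $PK\left(\mathcal{D}\right)_{A_i}$ is by construction also an object or morphism of $PK\left(\mathcal{D}\right) = PK\left(\mathcal{D}\right)_{\mathcal{I}}$; moreover, since the tensor product on both categories is defined coordinate-wise on pairs $\left(D,e\right)$, the tensor product on $PK\left(\mathcal{D}\right)_{A_i}$ is the restriction of the tensor product on $PK\left(\mathcal{D}\right)$. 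This realizes each $PK\left(\mathcal{D}\right)_{A_i}$ as a monoidal subcategory of $PK\left(\mathcal{D}\right)$ in the sense required by Theorem \ref{theo:equivalence}.

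The three remaining hypotheses of Theorem \ref{theo:equivalence} --- that $X_1 \cup \dotsb \cup X_n$ generates all objects, that tensor products of objects of different colors commute up to isomorphism, and that the induced functor of \cref{def:category-splits} is full --- are exactly the three bulleted conditions assumed in the corollary. Applying Theorem \ref{theo:equivalence} therefore yields the equivalence $PK\left(\mathcal{D}\right) \simeq PK\left(\mathcal{D}\right)_{A_1} \otimes \dotsb \otimes PK\left(\mathcal{D}\right)_{A_n}$, establishing the first claim.

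For the second claim, I would invoke Lemma \ref{lem:mini-equivalence}, whose sole additional hypothesis --- that for any object $\left(D,e\right) \in Ob\left(PK\left(\mathcal{D}\right)_{A_i}\right)$ the idempotent $e$ is uniquely determined by $D$ --- is precisely the extra condition imposed in the second part of the corollary. That lemma gives an actual isomorphism of categories $PK\left(\mathcal{D}\right)_{A_1} \otimes \dotsb \otimes PK\left(\mathcal{D}\right)_{A_n} \cong \mathcal{D}_{A_1} \otimes \dotsb \otimes \mathcal{D}_{A_n}$; composing with the equivalence from the first claim produces the desired equivalence $PK\left(\mathcal{D}\right) \simeq \mathcal{D}_{A_1} \otimes \dotsb \otimes \mathcal{D}_{A_n}$.

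There is no substantive obstacle here: the heavy lifting has already been done in Theorem \ref{theo:equivalence} and Lemma \ref{lem:mini-equivalence}, and the corollary is a direct consequence of their combination. The only bookkeeping step worth writing out explicitly is confirming the subcategory-with-restricted-tensor-product structure used to feed into Theorem \ref{theo:equivalence}, which amounts to a routine unfolding of the coordinate-wise definitions given in \cref{def:Karoubi-envelope} and \cref{def:colored-subcategory}.
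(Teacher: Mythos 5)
Your proposal is correct and follows essentially the same route as the paper: the first claim is obtained by feeding the three bulleted hypotheses into \cref{theo:equivalence} with $\mathcal{C}=PK(\mathcal{D})$ and $\mathcal{C}_i=PK(\mathcal{D})_{A_i}$, and the second claim by composing with the isomorphism from \cref{lem:mini-equivalence}. Your explicit check that each $PK(\mathcal{D})_{A_i}$ sits inside $PK(\mathcal{D})$ as a monoidal subcategory with the restricted tensor product is a small addition the paper leaves implicit, but the argument is otherwise identical.
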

	\begin{proof}
	The three bulleted points constitue the main hypotheses of \cref{theo:equivalence}, so there is an equivalence of categories between $PK\left(\mathcal{D}\right)$ and $PK\left(\mathcal{D}\right)_{A_1}\otimes\dotsb\otimes PK\left(\mathcal{D}\right)_{A_n}.$ In addition, if for any object $\left(D,e\right)\in Ob\left(PK\left(\mathcal{D}\right)_{A_i}\right)$, $e$ is uniquely determined by $D$, then the main hypotheses of \cref{lem:mini-equivalence} are met, so that $PK\left(\mathcal{D}\right)_{A_1}\otimes\dotsb\otimes PK\left(\mathcal{D}\right)_{A_n}$ is isomorphic to $\mathcal{D}_{A_1}\otimes\dotsb\otimes \mathcal{D}_{A_n}$. Therefore, in this case, $PK\left(\mathcal{D}\right)$ is equivalent to $\mathcal{D}_{A_1}\otimes\dotsb\otimes \mathcal{D}_{A_n}$ as well.
	\end{proof}

	Please recall that in $Heis_{F,k}$, $\beta\left(e_i\right)$, $i=1,\dotsc,n$, is a morphism in $Q_+,$ and a downward strand carrying a token labelled by $e_i,$ $i=1,\dotsc,n,$ is a morphism in $Q_-$. For the sake of having notation, denote the downward strand carrying a token labelled by $e_i$ by $\beta'(e_i).$
	
	\begin{defin}
	We denote by $PK\left(\Heis_{F,k}\right)$ the \textit{partial Karoubi envelope} of $\Heis_{F,k}$ with respect to the idempotents $\mathcal{I}=\{\beta(e_1),\dotsc,\beta(e_n),\beta'(e_1),\dotsc,\beta'(e_n)\}.$
	\end{defin}
	\begin{defin}
	For $i=1,\dotsc,n$ we denote by $PK\left(\Heis_{F,k}\right)_i$ the subcategory of $PK\left(\Heis_{F,k}\right)$ that is monoidal with respect to the restriction of the tensor product from $PK\left(\Heis_{F,k}\right)$, and that is generated by the objects $(Q_+,\beta(e_i))$ and $(Q_-,\beta'(e_i)),$ and morphisms of color $i$. In the notation of \cref{def:Karoubi-envelope}, we can write $PK\left(\Heis_{F,k}\right)_i=PK\left(\Heis_{F,k}\right)_{A_i},$ where $A_i=\{\beta(e_i),\beta'(e_i)\}$.
	\end{defin}

	For the final Corollary below, it is worth noting that, in the notation of \cref{def:colored-subcategory}, $\Heis_{F_i,k}=(\Heis_{F,k})_{A_i}$ for $i=1,\dotsc,n,$ where $A_i=\{\beta(e_i),\beta'(e_i)\}.$ This observation allows us to use \cref{cor:equivalence2} in the proof below, with $\mathcal{D}_{A_i}=\Heis_{F_i,k}.$
	
	\begin{cor}\label{cor:equivalence}
	 $PK\left(\Heis_{F,k}\right)$ is equivalent to $\Heis_{F_1,k}\otimes\dotsb\otimes \Heis_{F_n,k}$.
	\end{cor}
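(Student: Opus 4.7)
The plan is to apply \cref{cor:equivalence2} with $\mathcal{D}=\Heis_{F,k}$ and $A_i=\{\beta(e_i),\beta'(e_i)\}$ for $i=1,\dotsc,n$. As observed immediately before the corollary, this choice of $A_i$ satisfies $(\Heis_{F,k})_{A_i}=\Heis_{F_i,k}$, so the desired equivalence will follow once the hypotheses of \cref{cor:equivalence2} are verified.

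Two of those hypotheses are nearly formal. For generation of objects, every object $(D_1\otimes\dotsb\otimes D_m,\,e_1\otimes\dotsb\otimes e_m)$ of $PK(\Heis_{F,k})$ is the tensor product of the length-one objects $(D_k,e_k)$, and each such $(D_k,e_k)$ lies in $PK(\Heis_{F,k})_{A_{j_k}}$ for the unique color $j_k$ with $e_k\in A_{j_k}$. For uniqueness of the idempotent decoration, note that each $A_i$ contains exactly one idempotent with source $\sQ_+$ (namely $\beta(e_i)$) and exactly one with source $\sQ_-$ (namely $\beta'(e_i)$), so on an object $(D,e)\in Ob(PK(\Heis_{F,k})_{A_i})$ the decoration $e$ is forced componentwise by $D$.

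For the commutativity of distinct colors, I will work in the colored presentation $\Heis_{F,k}'$ via \cref{theo:isomorphic}. For $i\neq j$, each of the crossings $s_{i,j}$, $t_{i,j}$, and $t_{i,j}'$ is invertible: for $s_{i,j}$ this is \cref{rel:doublecross-up2}, while for $t_{i,j}$ and $t_{i,j}'$ it is part of the inversion relation of \cref{def:H2} (the condition explicitly imposed for $i\neq j$). These invertible crossings, together with their adjoints built from the cups and caps, produce the isomorphisms $x\otimes y\cong y\otimes x$ in $PK(\Heis_{F,k})$ needed whenever $x\in Ob(PK(\Heis_{F,k})_{A_i})$ and $y\in Ob(PK(\Heis_{F,k})_{A_j})$.

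The main obstacle is fullness of the induced functor $F\colon PK(\Heis_{F,k})_{A_1}\otimes\dotsb\otimes PK(\Heis_{F,k})_{A_n}\to PK(\Heis_{F,k})$. Let $f$ be a morphism between color-grouped objects $Y_1\otimes\dotsb\otimes Y_n\to Z_1\otimes\dotsb\otimes Z_n$, where $Y_i,Z_i\in Ob(PK(\Heis_{F,k})_{A_i})$. Representing $f$ as a colored string diagram in $\Heis_{F,k}'$, I will use the token- and dot-sliding relations \cref{rel:tokenslide-up-right2,rel:dotslide1.2} together with isotopy invariance of affine wreath product relations to remove all tokens and dots from mixed-color crossings, and then use the invertibility of those crossings (applied in matched pairs, since the color sequences at top and bottom are sorted identically) to eliminate every mixed-color crossing. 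The resulting equivalent sum of diagrams contains no crossings, cups, or caps between distinct colors, so each such diagram factors visibly as a tensor product of mono-colored diagrams and therefore lies in the image of $F$ on a simple tensor. Once fullness is established, \cref{cor:equivalence2} yields the stated equivalence.
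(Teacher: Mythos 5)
Your overall strategy is exactly the paper's: apply \cref{cor:equivalence2} with $A_i=\{\beta(e_i),\beta'(e_i)\}$, note $(\Heis_{F,k})_{A_i}=\Heis_{F_i,k}$, and verify generation of objects, uniqueness of the idempotent decoration, commutativity of distinct colors via the invertible mixed-color crossings, and fullness of the induced functor. The first three verifications match the paper and are fine. The problem is the fullness step, which is where all the real content lives, and your argument for it has a genuine gap.

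Eliminating mixed-color crossings is not enough to make a diagram ``factor visibly as a tensor product of mono-colored diagrams'' in the sorted order $1,\dotsc,n$: a diagram can contain closed components (bubbles) of color $i$ sitting in a region surrounded by strands of color $j\neq i$, with no mixed-color crossings at all, and such a diagram does not lie in the image of $F$ until the bubble is moved into the color-$i$ block. Doing that requires sliding a color-$i$ bubble through color-$j$ strands, and the fact that this is possible is precisely the content of the bubble slide relations \cref{rel:clockwise-bubble-slide2,rel:counterclockwise-bubble-slide2}, whose correction terms carry a factor $\delta_{i,j}$ and hence vanish for $i\neq j$. This is the one point the paper's proof singles out explicitly, and your proposal omits it. Separately, your claim that mixed-color crossings can be cancelled ``in matched pairs'' is only heuristic as stated: to make it rigorous you need to first reduce an arbitrary morphism to a normal form in which any two strands cross at most once, strands do not self-intersect, and each strand has at most one cup or cap (self-intersections, for instance, resolve via the curl relations into dots and \emph{bubbles}, feeding back into the previous issue). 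The paper obtains this reduction by invoking the spanning set of \cite[Prop.~8.14]{RS17}; without some such normal-form argument, the induction implicit in ``eliminate every mixed-color crossing'' is not justified. Both gaps are repairable with material already in the paper, but as written the fullness argument is incomplete.
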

	\begin{proof}
		$PK\left(Heis_{F,k}\right)$ is a strict $\kk$-linear monoidal category, and \[X=Ob\left(PK\left(Heis_{F,k}\right)_1\right)\cup\dotsb\cup Ob\left(PK\left(Heis_{F,k}\right)_n\right)\]
		\noindent is a subclass of the objects of $PK\left(\Heis_{F,k}\right)$ such that all objects of $PK\left(\Heis_{F,k}\right)$ are generated by the objects in $X,$ and there is an isomorphism between the objects $q_i\otimes q_j$ and $q_j\otimes q_i$ in $PK(\Heis_{F,k})$ when $q_i\in PK\left(\Heis_{F,k}\right)_i$ and $q_j\in PK\left(\Heis_{F,k}\right)_j$ for $i\neq j$, given by compositions of the diagrams:
		
		\noindent \begin{minipage}{0.25\linewidth}
			\begin{equation}\label{rel:one}
			\begin{tikzpicture}[anchorbase]
			\draw[<-] (0,0) -- (1,1) node[anchor=south] {$i$};
			\draw[->] (1,0) node[anchor=north] {$j$} -- (0,1);	
			\draw{(2,0.5)};
			\draw{(-1,0.5)};			
			\end{tikzpicture},
			\end{equation}
		\end{minipage}
		\begin{minipage}{0.25\linewidth}
			\begin{equation}\label{rel:two}
			\begin{tikzpicture}[anchorbase]
			\draw[->] (0,0) node[anchor=north] {$i$} -- (1,1);
			\draw[<-] (1,0) -- (0,1) node[anchor=south] {$j$};	
			\draw{(2,0.5)};
			\draw{(-1,0.5)};			
			\end{tikzpicture}, 
			\end{equation} 
		\end{minipage}
		\begin{minipage}{0.25\linewidth}
			\begin{equation}\label{rel:three}
			\begin{tikzpicture}[anchorbase]
			\draw[->] (0,0) node[anchor=north] {$i$} -- (1,1);
			\draw[->] (1,0) node[anchor=north] {$j$} -- (0,1);	
			\draw{(2,0.5)};
			\draw{(-1,0.5)};			
			\end{tikzpicture},
			\end{equation}
		\end{minipage}
		\begin{minipage}{0.25\linewidth}
			\begin{equation}\label{rel:four}
			\begin{tikzpicture}[anchorbase]
			\draw[<-] (0,0)  -- (1,1) node[anchor=south] {$i$};
			\draw[<-] (1,0)  -- (0,1) node[anchor=south] {$j$};	
			\draw{(2,0.5)};
			\draw{(-1,0.5)};			
			\end{tikzpicture}.
			\end{equation}
		\end{minipage}
		Following the techniques of \cite[Prop. 8.14]{RS17}, there is a spanning set of $Mor(PK(Heis_{F,k}))$ over $\kk$, denoted $\mathcal{S}$, which is given by diagrams satisfying:
		\begin{itemize}
			\item Any two strands of the diagram intersect at most once,
			\item No strands of the diagram self-intersect,
			\item Each strand consists of at most one cup or cap,
			\item Dots appear above all crossings of strands, and
			\item All bubbles are to the right of the strands of the diagram.
		\end{itemize}
		We have that for all diagrams in $\mathcal{S}$, all bubbles of color $i$ slide through morphisms of color $j$ when $i\neq j$. This property arises from the relations \cref{rel:clockwise-bubble-slide2} and \cref{rel:counterclockwise-bubble-slide2} in \cref{theo:relations2}. Therefore, the induced functor $F$ from \cref{def:category-splits} is full on morphisms of $PK\left(\Heis_{F,k}\right)_1\otimes\dotsb\otimes PK\left(\Heis_{F,k}\right)_n$. Moreover, all $A_i=\{\beta(e_i),\beta'(e_i)\}$ are subsets of $\mathcal{I}=\{\beta(e_1),\dotsc,\beta(e_n),\beta'(e_1),\dotsc,\beta'(e_n)\}$, $\mathcal{I}$ satisfies the conditions in \cref{def:Karoubi-envelope}, and for any object $(Q,e)\in PK(\Heis_{F,k})_i,$ $e$ is uniquely determined by $Q$. Therefore, by \cref{cor:equivalence2}, there is an equivalence of categories between $PK\left(\Heis_{F,k}\right)$ and $\Heis_{F_1,k}\otimes\dotsb\otimes \Heis_{F_n,k}$.
	\end{proof}
	
	
\bibliographystyle{alphaurl}
\bibliography{biblist}

\end{document}